%Version of September 10, 2016
\documentclass[12pt]{article}

\usepackage{amsrefs}

%\usepackage[active]{srcltx}
%%%%%%%%%%  DO NOT FORGET TO CHANGE DATE %%%%%%%%%

\usepackage{amsmath,amsthm,amsfonts,latexsym,amscd,amssymb,enumerate}

\usepackage{hyperref}
\usepackage{color}

\setcounter{secnumdepth}{2}
\setcounter{tocdepth}{3}

\swapnumbers
\theoremstyle{plain}
\newtheorem{theorem}{Theorem}[section]
\newtheorem{lemma}[theorem]{Lemma}
\newtheorem{corollary}[theorem]{Corollary}
\newtheorem{proposition}[theorem]{Proposition}
\newtheorem{conjecture}[theorem]{Conjecture}

\theoremstyle{definition}
\newtheorem{definition}[theorem]{Definition}

\theoremstyle{remark}
\newtheorem{remark}[theorem]{Remark}
\newtheorem{question}[theorem]{Question}

\usepackage{color}

\newcommand{\remts}[1]{#1}

\newcommand{\reals}{\mathbb{R}}
\newcommand{\complexs}{\mathbb{C}}
\newcommand{\naturals}{\mathbb{N}}
\newcommand{\integers}{\mathbb{Z}}
\newcommand{\rationals}{\mathbb{Q}}

\DeclareMathOperator{\Mat}{M}
\DeclareMathOperator{\Zen}{\mathcal{Z}}
\DeclareMathOperator{\SL}{SL}

\newcommand{\abs}[1]{\left\lvert#1\right\rvert} %absolute value

\newcommand{\tensor}{\otimes}

\newcommand{\iso}{\cong}

\newcommand{\subgroup}{\leq}
\newcommand{\supergroup}{\geq}

\newcommand{\divides}{\mid}

   %support
  %closure
\DeclareMathOperator{\im}{im}      %image
    %Volume
  %Diameter
  %Distance
    %order
    %Endomorphisms
    %Homomorphisms

\DeclareMathOperator{\tr}{tr}
\DeclareMathOperator{\pr}{pr}
%\DeclareMathOperator{\res}{res}  %Restriction
  %Restriction
% \DeclareMathOperator{\Sp}{Sp}    %Spur % schon definiert???
  %flip

  % Ricci curvature
  % Scalar curvature

%\newcommand{\Kommentar}[1]{}
\newcommand{\forget}[1]{}

\newcommand{\innerprod}[1]{\langle #1 \rangle}

{\catcode`@=11\global\let\c@equation=\c@theorem}

% Hier werden Gleichungen und Theoreme zusammen gezaehlt. Soll ein anderer Zaehler statt theorem verwendet werden (entspr. dem \newtheorem-Befehl), muss 2-mal theorem durch diesen Zaehler ersetzt werden. (Die Zeilen entsprechen der Zaehlung von \newtheorem{equation}[theorem]).

%\renewcommand{\labelenumi}{(\arabic{enumi})}

% Diese neue Variante schreibt jetzt auch arabische Ziffern in
% Klammern beim zitieren!

\allowdisplaybreaks[2]

\usepackage{amsxtra}
%\usepackage{dcpic}
%\usepackage{enumerate}
%\usepackage{epic,eepic}
%\usepackage{epsfig}
%\usepackage{graphicx}
%\usepackage[cmtip,arrow]{xy}
%\usepackage{pb-diagram,pb-xy}
%\usepackage[english]{babel} %maybe not compatible with amsrefs
%\usepackage{index}
%\newindex{sym}{isy}{osy}{Index of symbols}
%\newindex{default}{idx}{odx}{Index of keywords}

\makeatletter
\let\c@equation=\c@theorem

\makeatother

\DeclareMathOperator{\lcm}{lcm}

\newcommand{\dimu}{\dim^u_G% _{\NG}
}
\newcommand{\field}{\mathbb}

\newcommand{\DKG}{D(K[G])}
\newcommand{\Z}{\field{Z}}              %Ganze Zahlen
              %Nat Zahlen

\newcommand{\C}{\field{C}}
\newcommand{\Q}{\field{Q}}

\renewcommand{\NG}{\mathcal{N}(G)}

\newcommand{\NH}{\mathcal{N}(H)}
\newcommand{\UG}{\mathcal{U}(G)}
\newcommand{\EKG}{\mathcal{E}(K[G])}

\newcommand{\ZG}{\mathcal{Z}{(\NG)}}

\newcommand{\tru}{\tr^u_G% _{\NG}
}

\newcommand{\CCC}{\mathfrak{C}}

\setlength{\parindent}{0pt}

\begin{document}

\pagestyle{myheadings}
\markboth{Anselm Knebusch, Peter Linnell, Thomas Schick}{On the center-valued
  Atiyah conjecture} 

%\date{Last compiled \today}%; last edited  \heuteIst or later}

\title{On the center-valued Atiyah conjecture for $L^2$-Betti numbers}

\author{Anselm
  Knebusch\thanks{\protect\href{mailto:anselm.knebusch@hft-stuttgart.de}{\remts{e-mail: anselm.knebusch@hft-stuttgart.de}}}\\HFT-Stutgart\\
University of Applied Sciences\\
Germany \and Peter Linnell\thanks{\protect\href{mailto:plinnell@math.vt.edu}{e-mail: plinnell@math.vt.edu}\protect\\
\protect\href{http://www.math.vt.edu/people/plinnell/}{www:
http://www.math.vt.edu/people/plinnell/}}
\\ Virginia Tech\\ \remts{Blacksburg}\\ \remts{USA} \and Thomas Schick\thanks{
\protect\href{mailto:thomas.schick@math.uni-goettingen.de}{\remts{e-mail:
  thomas.schick@math.uni-goettingen.de}}
\protect\\
\protect\href{http://www.uni-math.gwdg.de/schick}{www:~http://www.uni-math.gwdg.de/schick}\protect\\
{partially funded by the Courant Research Center ``Higher order structures in Mathematics"
within the German initiative of excellence}
% \protect\\
% Fax: ++49 -551/39 2985
}\\
Mathematisches Institut\\
Georg-August-Universit\"at G{\"o}ttingen\\
Germany}
\maketitle

        \begin{abstract}
          The so-called Atiyah conjecture states that the $\NG$-dimensions of
          the $L^2$-homology 
          modules of finite free $G$-CW-complexes belong to a certain set of rational
          numbers, depending on the finite subgroups of $G$.  In this
          article we extend this conjecture to a statement for
          the center-valued dimensions. We show that the conjecture is
          equivalent 
          to a precise description of the structure as a semisimple Artinian
          ring of the division closure
          $D(\mathbb{Q}[G])$ of
          $\Q[G]$  in the ring of affiliated operators. We prove the
          conjecture for all groups in 
          Linnell's class $\CCC$, containing in particular
          free-by-elementary amenable groups. 

          The center-valued Atiyah conjecture states that the center-valued
          $L^2$-Betti numbers of finite free $G$-CW-complexes are contained in
          a certain discrete subset of the center of $\C[G]$, the one
          generated as an additive group by the center-valued traces of all
          projections in $\C[H]$, where $H$ runs through the finite
          subgroups of $G$.

         Finally, we use the approximation theorem of Knebusch
    \cite{Knebusch} for the center-valued $L^2$-Betti numbers to extend
          the 
          result to many groups which are residually in $\CCC$, in particular
          for finite extensions of products of free groups and of pure braid
          groups. 
        \end{abstract}

\section{Introduction}

In \cite{Atiyah}, Atiyah introduced $L^2$-Betti numbers for manifolds with
cocompact free $G$-action for a discrete group $G$ (later generalized to
finite free $G$-CW-complexes). There, he asked \cite{Atiyah}*{p.~72} about the 
possible values these can assume. 
This question was later popularized in precise form as the so-called ``strong
Atiyah conjecture''. One easily sees that the possible values depend on $G$. 
For a finite subgroup of order $n$ in $G$, a free cocompact $G$-manifold
with $L^2$-Betti number $1/n$ can be constructed.
For certain groups $G$ which contain finite subgroups of arbitrarily large
order, with quite some effort manifolds $M$ with $\pi_1(M)=G$ and with
transcendental $L^2$-Betti numbers have been constructed
\cites{Austin,Grabowski,Pichot-Schick-Zuk}. In the following, we will
therefore concentrate on $G$ with a bound on the orders of finite subgroups.

The $L^2$-Betti numbers are defined using the $L^2$-chain complex.
The chain groups there are of the form $l^2(G)^d$, and the differentials are
given by convolution multiplication with a matrix over $\integers[G]$. 
The strong Atiyah conjecture for free finite $G$-CW-complexes
is equivalent to the following (with $K=\integers$): 
\begin{definition}\label{DAtiyah}
	Let $G$ be a group with a bound on the orders of finite
subgroups and let $\lcm(G)\in\naturals$ (the positive integers)
denote the least common multiple of these orders.
	Let $K\subset\complexs$ be a subring.
	
	We say that $G$ satisfies the \emph{strong Atiyah conjecture over $K$,
          or $K[G]$ satisfies the strong Atiyah conjecture} if for every
        $n\in\naturals$ and every $A\in \Mat_n(K[G])$  
	\begin{equation*}
		\dim_G(\ker(A)):=\tr_G(\pr_{\ker A}) \in \frac{1}{\lcm(G)}\integers.  
	\end{equation*}
	Here, as before, we consider $A\colon l^2(G)^n\to l^2(G)^n$ as a
        bounded operator, acting by left convolution multiplication --- the
        continuous extension of the left multiplication action on the group
        ring to $l^2(G)$. $\tr_G$ is the canonical trace on
        $\Mat_n(\NG)$, i.e.~the extension (using the matrix trace) of
        $\tr_G\colon \NG\to\complexs$; $a\mapsto \innerprod{a
          \delta_e,\delta_e}_{l^2(G)}$, where $\NG$, the weak closure of
        $\C[G]\subset \mathcal{B}(\ell^2(G))$ is the group von Neumann algebra.
	
	If $G$ contains arbitrarily large finite subgroups, we set $\lcm(G):=+\infty$.
\end{definition}

A projection $P$ will always be a self adjoint idempotent, so $P =
P^2 = P^*$, where $^*$ indicates the involution on $\mathcal{N}(G)$.
If $E$ is an idempotent, then $E$ is similar to a
projection $P$ and then $\tr_G(E) = \tr_G(P)$.  Also a central
idempotent is always a projection.  Note that if $G$ is
an infinite group, then \remts{the set
$\{\tr_G(P)\}$, where $P$ runs through the projectors in
$\Mat_n(\mathcal{N}(G))$, $n\in\naturals$ consists of all non-negative real numbers}.  The strong Atiyah conjecture
predicts, \remts{on the other hand,} that the $L^2$-Betti numbers take values in
the subgroup of $\reals$ generated by traces of projectors defined already
over $\rationals[H]$ for the finite subgroups $H$ of $G$: the projector
$p_H=(\sum_{h\in H} h)/\abs{H}$ satisfies $\tr_G(p_H)=1/\abs{H}$. And by the
Chinese remainder theorem, the additive subgroup of $\reals$ generated by the
$\abs{H}^{-1}$ is exactly $\frac{1}{\lcm(G)}\integers$. 

We now turn to the center-valued refinements of the above statements. The
center-valued $L^2$-Betti numbers are obtained by replacing the canonical
(com\-plex-valued) trace $\tr_G$ by the center-valued trace $\tru$
(see Definition \ref{def:traces}), taking
values in the center of $\NG$. Note
that by general theory \cite{Kadison2}*{Chapter 8}, as every finite von
Neumann algebra has a unique 
normalized center-valued trace, this is a powerful invariant: two finitely
generated projective Hilbert $\NG$-modules are isomorphic if and only if their
center-valued dimensions coincide.  The center of a ring $R$ will be
denoted $\Zen(R)$.
\begin{definition}\label{def:LG_and_Atiyahconj}
	Let $G$ be a group with $\lcm(G)<\infty$,
let $K$ be a subring of $\mathbb{C}$, let $F$ be the field of
fractions of $K$, and assume that $F$ is closed under complex
conjugation.  Let $\remts{L_K}(G)$ be the additive
        subgroup of $\Zen(\NG)$ generated by $\tru(P)\in
        \Zen(\complexs[G])\subset \Zen(\NG)$ where $P$ runs through projections
        \remts{$P\in F[H]$ with $H\subgroup G$} a finite subgroup.

	We say that $G$ satisfies the \emph{center-valued Atiyah conjecture
          over $K$, 
          or $K[G]$ satisfies the center-valued conjecture} if for every
        $n\in\naturals$ and every $A\in \Mat_n(K[G])$  we have
	$\dimu(\ker(A)):=\tru(\pr_{\ker A})\in \remts{L_K}(G)$.
\end{definition}
Observe that  \remts{$G$
satisfies the center-valued Atiyah conjecture over $K$ if and only if
$G$ satisfies the center-valued conjecture over its field of fractions $F$}.
Indeed the ``only
if'' is obvious.  On the other hand if $A \in \Mat_n(F[G])$, then \remts{(``clearing
denominators'')} there
exists $0 \ne k \in K$ such that $kA \in \Mat_n(K[G])$, and $\ker A =
\ker kA$, which verifies the ``if" part.

\remts{\begin{proposition}
  If a group $G$ satisfies the center-vlaued Atiyah conjecture over $K$ of
  Definition \ref{def:LG_and_Atiyahconj}, then $G$ also satisfies the
  (classical) strong Atiyah conjecture over $K$ of Definition \ref{DAtiyah}.
\end{proposition}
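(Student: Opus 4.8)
The plan is to use the well-known fact that the canonical complex-valued trace on $\NG$ factors through the center-valued one, i.e. $\tr_G = \tr_G\circ\tru$ on $\NG$, and likewise, entrywise, on each $\Mat_n(\NG)$. Granting this, for $A\in\Mat_n(K[G])$ we obtain
\[
  \dim_G(\ker A) = \tr_G(\pr_{\ker A}) = \tr_G\bigl(\tru(\pr_{\ker A})\bigr) = \tr_G\bigl(\dimu(\ker A)\bigr),
\]
so the classical conjecture of Definition~\ref{DAtiyah} will follow once we show $\tr_G\bigl(L_K(G)\bigr)\subseteq\frac1{\lcm(G)}\integers$ and then invoke the hypothesis to place $\dimu(\ker A)$ in $L_K(G)$. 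The identity $\tr_G = \tr_G\circ\tru$ belongs to the standard theory of the center-valued trace recalled around Definition~\ref{def:traces}: $\tr_G\circ\tru$ is a normal tracial state on $\NG$, it agrees with $\tr_G$ on $\Zen(\NG)$ because $\tru$ restricts to the identity there, and a normal trace on a finite von Neumann algebra is determined by its restriction to the center (see \cite{Kadison2}*{Chapter~8}); the statement for $\Mat_n(\NG)$ then follows entrywise.

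It remains to evaluate $\tr_G$ on $L_K(G)$. By Definition~\ref{def:LG_and_Atiyahconj} the group $L_K(G)$ is generated, as an additive subgroup of $\Zen(\NG)$, by the elements $\tru(P)$, where $P$ runs over the projections in $F[H]\subseteq\complexs[H]\subseteq\NG$ with $H\subgroup G$ a finite subgroup and $F$ the field of fractions of $K$. Hence $\tr_G(L_K(G))$ is the subgroup of $\reals$ generated by the numbers $\tr_G\bigl(\tru(P)\bigr) = \tr_G(P)$, and it suffices to prove $\tr_G(P)\in\frac1{\abs H}\integers$ for each such $P$, since $\abs H\divides\lcm(G)$ then gives $\tr_G(P)\in\frac1{\lcm(G)}\integers$.

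For this, write $P = \sum_{h\in H}a_h h$; by the very definition of $\tr_G$ one has $\tr_G(P) = a_e$. On the other hand $a_e = \frac1{\abs H}\Tr\bigl(\lambda_H(P)\bigr)$, where $\lambda_H$ is the left regular representation of $H$ on $\complexs[H]\cong\complexs^{\abs H}$, because $\Tr(\lambda_H(h)) = \abs H\,\delta_{h,e}$ for $h\in H$. Since $\lambda_H(P)$ is an idempotent matrix over $\complexs$, its trace equals its rank, a non-negative integer; therefore $\tr_G(P) = \frac1{\abs H}\Tr(\lambda_H(P))\in\frac1{\abs H}\integers$, as required. Putting everything together, $\dim_G(\ker A) = \tr_G(\dimu(\ker A))\in\tr_G(L_K(G))\subseteq\frac1{\lcm(G)}\integers$. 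The only step that is not routine bookkeeping is the factorization $\tr_G = \tr_G\circ\tru$ and its compatibility with matrix amplification and with the normalizations fixed in Definition~\ref{def:traces}; this is the point to state with a little care, but it is entirely standard.
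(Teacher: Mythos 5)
Your proof is correct and follows essentially the same route as the paper: factor $\tr_G=\tr_G\circ\tru$ and then observe that the generators $\tru(P)$ of $L_K(G)$ have $\tr_G(P)\in\frac1{\lcm(G)}\integers$. The only difference is cosmetic: where the paper disposes of the last step by citing the strong Atiyah conjecture for finite groups as well known, you spell it out directly via the regular representation $\lambda_H$ and the integrality of the rank of the idempotent matrix $\lambda_H(P)$, which is a perfectly valid and somewhat more self-contained way to close the argument.
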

\begin{proof}
  By the universal property of the center-valued trace \cite[Chapter 8]{Kadison2},
  $\tr_G=\tr_G\circ\tru$. We therefore only have to check that $\tr_G(a)\in
  \frac{1}{\lcm(G)}\integers$ for all $a\in L_K(G)$. By the definition of
  $L_K(G)$, we just have to show that $\tr_G(P)\in\frac{1}{\lcm(G)}\integers$
  for each projector $P\in F[H]$, where $H\subgroup G$ is an arbitrary finite
  subgroup. This is of course well known to be true, it follows e.g.~from
  the fact that finite groups satisfy the strong Atiyah conjecture over $K$.
\end{proof}
}

\begin{proposition}[compare Corollary \ref{corol:discrete}]\label{prop:LGdiscrete}
 If $\lcm(G)<\infty$ then $\remts{L_K}(G)\subset \Zen(\NG)$ is discrete. In particular, the
 center-valued Atiyah conjecture predicts a ``quantization'' of the
 center-valued $L^2$-Betti numbers.
\end{proposition}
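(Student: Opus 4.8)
I would prove the sharper statement that $L_K(G)$ is contained in $\frac1D\Lambda$ for a suitable finite constant $D=D(G)$ and a discrete subgroup (lattice) $\Lambda$ inside a finite-dimensional $*$-subalgebra of $\Zen(\NG)$; discreteness of $L_K(G)$ then follows at once, finite-dimensional subspaces of Banach spaces being closed and topologically embedded.

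The decisive step is to confine every generator $\tru(P)$ of $L_K(G)$ to the group algebra of a \emph{finite} normal subgroup. Recall the standard formula $\tru(g)=\abs{C_g}^{-1}\sum_{g'\in C_g}g'$ when the conjugacy class $C_g$ of $g$ is finite, and $\tru(g)=0$ otherwise: this follows from $\tru$ being the $\tr_G$-preserving conditional expectation of $\NG$ onto $\Zen(\NG)$, together with the fact that an element of $\Zen(\NG)\subset\ell^2(G)$ has square-summable Fourier coefficients that are constant on conjugacy classes, hence vanish on the infinite ones. Let $\Delta=\{g\in G\mid[G:C_G(g)]<\infty\}$ be the FC-center of $G$ and $T\subgroup\Delta$ its subgroup of torsion elements; by Dietzmann's lemma $T$ is a locally finite subgroup, characteristic in $\Delta$ and hence normal in $G$. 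Every finite subgroup of $T$ has order dividing $\lcm(G)=m<\infty$, and a locally finite group whose finite subgroups have bounded order is itself finite; thus $T$ is finite. Now let $P$ be a projection in $F[H]\subset\C[H]$ with $H\subgroup G$ finite. By the tower property $\tru=\tru\circ\tr^u_{\NH}$ on $\NH$, and since $\tr^u_{\NH}(P)=\sum_{h\in H}a_h\,h$ with $a_h\in\C$, we get $\tru(P)=\sum_{h\in H}a_h\,\tru(h)$; the summands with $C_h$ infinite vanish, and for the others $h$ has finite order and $C_h\subset T$ (as $T\normalsubgroup G$), so $\tru(h)\in\C[T]$. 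Therefore $\tru(P)\in Z_T:=\Zen(\NG)\cap\C[T]$, and $L_K(G)\subset Z_T$, a finite-dimensional commutative $*$-algebra.

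For the integrality, let $G$ act by conjugation on the irreducible characters of the finite group $T$ and decompose $Z_T=\bigoplus_{[\rho]}\C\,e_{[\rho]}$, the sum over the $G$-orbits $[\rho]$, where $e_{[\rho]}=\sum_{\rho'\in[\rho]}e_{\rho'}$ is the corresponding $G$-invariant central idempotent; the $e_{[\rho]}$ are finitely many pairwise orthogonal projections, central in $\NG$, with $\sum_{[\rho]}e_{[\rho]}=1$, hence $\reals$-linearly independent in $Z_T$. Expanding $\tru(P)=\sum_{[\rho]}c_{[\rho]}\,e_{[\rho]}$, multiplying by $e_{[\rho]}$, applying $\tr_G$, and using that $e_{[\rho]}$ is central in $\NG$ and that $\tr_G\circ\tru=\tr_G$, one finds $c_{[\rho]}=\tr_G(Pe_{[\rho]})/\tr_G(e_{[\rho]})$. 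Since $T\normalsubgroup G$, the subgroup $HT$ is finite with $\abs{HT}\le\abs H\,\abs T\le m^2$; both $Pe_{[\rho]}$ and $e_{[\rho]}$ are projections in $\C[HT]$ (the central idempotent $e_{[\rho]}$ commutes with $P$), and $\tr_G$ restricts on $\C[HT]$ to its canonical trace, whose value on any projection lies in $\frac1{\abs{HT}}\integers$. Hence $c_{[\rho]}=p/q$ with $p,q\in\integers$, $0\le p\le q$, and $1\le q\le m^2$, so $c_{[\rho]}\in\frac1D\integers$ for $D:=\lcm(1,2,\dots,m^2)$, a constant depending only on $G$. Consequently $L_K(G)\subset\frac1D\bigoplus_{[\rho]}\integers\,e_{[\rho]}$, a discrete subset of $Z_T$, and so $L_K(G)$ is discrete in $\Zen(\NG)$.

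The one place where the hypothesis $\lcm(G)<\infty$ is used --- and, I expect, the main obstacle --- is the reduction in the first step: identifying the \emph{finite} normal subgroup $T$ (the torsion part of the FC-center) into whose group algebra every $\tru(P)$ falls, which rests on the elementary fact that a locally finite group with bounded finite-subgroup orders is finite. The remainder is bookkeeping with finite group algebras; some care should be taken with the tower property for center-valued traces (for instance via the description of $\ker\tr^u_{\NH}$ as the closed linear span of the commutators $xy-yx$) and with the formula for $\tru(g)$.
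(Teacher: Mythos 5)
Your argument is correct and it takes a genuinely different route from the paper's. The paper proves this statement (as Corollary~\ref{corol:discrete}) by first reducing to \emph{irreducible} projections $Q\in\C[H]$ with $H\supseteq\Delta^+$, and then invoking Lemma~\ref{lemma:Atiyah}, which gives a closed-form expression for $\tru(Q)$ as an explicit rational multiple of one of the primitive central projections $P^i\in\Zen(\NG)\cap\Zen(\C[\Delta^+])$; discreteness is read off from that finite list of coefficients. You instead avoid the inner-product computation entirely: you first confine $\tru(P)$ to $\Zen(\NG)\cap\C[\Delta^+]$ directly via the coefficient formula for $\tru$ on group elements and the tower property $\tru|_{\NH}=\tru\circ\tr^u_{\NH}$ (justified by $\ker\tr^u_{\NH}=\overline{[\NH,\NH]}$, which for finite $H$ is elementary), and then bound the denominators of the coordinates $c_{[\rho]}=\tr_G(Pe_{[\rho]})/\tr_G(e_{[\rho]})$ by evaluating the canonical trace on projections in $\C[H\Delta^+]$ and observing that $|H\Delta^+|$ is bounded in terms of $\lcm(G)$. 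Your route is more self-contained and in particular does not need the Galois-descent machinery of Lemmas~\ref{lemma:centralidempotents}--\ref{lemma:xyz1} that underlies Lemma~\ref{lemma:Atiyah}; the price is a cruder bound on denominators ($\lcm(1,\dots,\lcm(G)^2)$ rather than a value tied directly to $\lcm(G)$). The paper's route pays for Lemma~\ref{lemma:Atiyah} anyway, since that explicit formula is what drives Definition~\ref{def:Atiyah-Artinean} and the equivalence Theorem~\ref{con:at}, so nothing is wasted there. One small improvement to your bookkeeping: since $\Delta^+\normalsubgroup G$, $H\Delta^+$ is again a finite subgroup of $G$, so $|H\Delta^+|\le\lcm(G)$, and you could replace $D=\lcm(1,\dots,m^2)$ by $D=\lcm(1,\dots,m)$ with $m=\lcm(G)$.
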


\begin{remark}
  As for the ordinary strong Atiyah conjecture, the center-valued Atiyah conjecture
  over $\integers [G]$ is equivalent to the statement that the center-valued
  $L^2$-Betti numbers for finite free $G$-CW-complexes take values in $\remts{L_\integers}(G)$.

  The center-valued $L^2$-Betti numbers have been introduced and used in
  \cite{MR1474192}. 
\end{remark}

The strong Atiyah conjecture has many applications. Most interesting are
those for a torsion-free group $G$, i.e.~if $\lcm(G)=1$. This is exemplified
by the following surprising result of Linnell \cite{MR1242889}. We first recall
the notion of ``the'' division closure of $K[G]$.
\begin{definition}
  Let $G$ be a discrete group and let $K\subset \complexs$ be a
subring. Let $\UG$ denote
  the ring of unbounded operators on 
  $l^2(G)$ affiliated to $\NG$ (algebraically, $\UG$ is the Ore localization
  of $\NG$ at the set of all non-zero-divisors). 

  Define the \emph{division
    closure} $D(K[G])$ to be the smallest subring of $\UG$ containing $K[G]$ which
  is closed under taking inverses in $\UG$.
\end{definition}

\begin{theorem}\label{theo:skew_field}
  Let $G$ be a discrete group with $\lcm(G)=1$ and let $K$ be a
subring of $\complexs$.  Then
  $K[G]$ satisfies the strong Atiyah conjecture if and only if $D(K[G])$ is
  a skew field.
\end{theorem}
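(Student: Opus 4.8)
The plan is to carry out both directions inside the ring $\UG$ of unbounded operators affiliated to $\NG$, using three standard facts: $\UG$ is von Neumann regular and is the Ore localization of $\NG$ at its set of non-zero-divisors, so that an element of $\UG$ is invertible if and only if it is a non-zero-divisor; the von Neumann dimension extends to an additive and faithful function $\dim_G$ on finitely generated projective $\UG$-modules --- faithful meaning that such a module of dimension $0$ vanishes --- which is unchanged when a matrix over $\UG$ is multiplied on either side by an element of $\GL(\UG)$; and for every $A\in\Mat_n(\UG)$ one has $\dim_G\ker(A\colon l^2(G)^n\to l^2(G)^n)=\dim_G\ker(A\colon\UG^n\to\UG^n)$. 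Since $\lcm(G)=1$, the strong Atiyah conjecture for $K[G]$ is precisely the assertion that $\dim_G\ker A\in\Z$ for all $A\in\Mat_n(K[G])$ and all $n\in\N$, and I will use it in this form.

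\textbf{From ``$D(K[G])$ a skew field'' to the Atiyah conjecture.} Let $A\in\Mat_n(K[G])\subseteq\Mat_n(D(K[G]))$ and regard it as an endomorphism of $D(K[G])^n$. Over the skew field $D(K[G])$ both $\ker A$ and $\im A$ are free, say of ranks $r$ and $n-r$, and the sequence $0\to\ker A\to D(K[G])^n\xrightarrow{A}\im A\to 0$ splits; applying $-\otimes_{D(K[G])}\UG$ then identifies $\ker(A\colon\UG^n\to\UG^n)$ with $\ker A\otimes_{D(K[G])}\UG\iso\UG^r$, so that $\dim_G\ker A=r\in\Z=\tfrac{1}{\lcm(G)}\Z$. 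Apart from the routine bookkeeping of the dimension function, this direction is immediate.

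\textbf{From the Atiyah conjecture to ``$D(K[G])$ a skew field''.} First I would reduce the assertion to the statement that $\dim_G\ker B\in\Z$ for every matrix $B$ over $D(K[G])$. Granting this: for $0\ne a\in D(K[G])$ one has $\dim_G\ker(a)\in\Z\cap[0,1]$, and the value $1$ is excluded, since by additivity and faithfulness $1=\dim_G\UG=\dim_G\ker(a)+\dim_G(a\UG)$ would force $a\UG=0$, hence $a=0$; so $\dim_G\ker(a)=0$, whence $\ker(a)=0$ and $a\UG=\UG$ by faithfulness and additivity of $\dim_G$, i.e.\ $a$ is invertible in $\UG$. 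As $D(K[G])$ is division closed, $a^{-1}\in D(K[G])$; hence $D(K[G])$ is a skew field. To prove the reduced statement I would invoke Cohn's description of the rational closure: the set of all entries of inverses $A^{-1}$ of matrices $A$ over $K[G]$ that are invertible in $\UG$ is a division-closed subring of $\UG$ containing $K[G]$, and therefore contains $D(K[G])$. Hence every entry of a given $B\in\Mat_m(D(K[G]))$ can be written in the form $u_{ij}A_{ij}^{-1}v_{ij}$, with $A_{ij}$ a matrix over $K[G]$ invertible in $\UG$ and $u_{ij}$, $v_{ij}$ standard row and column vectors over $K[G]$. Set $\Lambda:=\bigoplus_{i,j}A_{ij}$, a matrix over $K[G]$ that is invertible in $\UG$, and assemble matrices $U$, $V$ over $K[G]$ from the $u_{ij}$ and $v_{ij}$ so that $U\Lambda^{-1}V=B$. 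Then the matrix
\[
  \mathcal M:=\begin{pmatrix}\Lambda & V\\ -U & 0\end{pmatrix}\in\Mat_{N+m}(K[G])
\]
satisfies, by the Schur complement identity,
\[
  \begin{pmatrix}I & 0\\ U\Lambda^{-1} & I\end{pmatrix}\,\mathcal M\,\begin{pmatrix}I & -\Lambda^{-1}V\\ 0 & I\end{pmatrix}=\begin{pmatrix}\Lambda & 0\\ 0 & B\end{pmatrix},
\]
the outer factors lying in $\GL_{N+m}(\UG)$ because $\Lambda\in\GL_N(\UG)$. Consequently $\dim_G\ker\mathcal M=\dim_G\ker\Lambda+\dim_G\ker B$, and since $\mathcal M$ and $\Lambda$ are matrices over $K[G]$ the Atiyah conjecture gives $\dim_G\ker\mathcal M\in\Z$ and $\dim_G\ker\Lambda\in\Z$; therefore $\dim_G\ker B\in\Z$.

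\textbf{The main obstacle.} The one genuinely nontrivial ingredient is the linearization used in the last step: that a matrix over $D(K[G])$ becomes, after a block direct sum with a matrix over $K[G]$ and conjugation by elements of $\GL(\UG)$, equivalent to a matrix over $K[G]$. I would obtain it from the identification of $D(K[G])$ with Cohn's rational closure of $K[G]$ in $\UG$; equivalently, one shows that adjoining to a subring of $\UG$ the inverse of a single $\UG$-invertible element preserves the property ``$\dim_G$ is $\Z$-valued on every matrix over the ring'', and then passes to the directed union of the rings built from $K[G]$ by iterating such adjunctions, which is $D(K[G])$. The remaining facts --- von Neumann regularity of $\UG$, the description of $\UG$ as the Ore localization of $\NG$, and the additivity, faithfulness and $l^2$-compatibility of $\dim_G$ --- are standard and would simply be cited.
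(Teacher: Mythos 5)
Your proposal is correct; both directions go through without essential gaps. However, it is worth noting that the paper does not give its own proof of Theorem~\ref{theo:skew_field}: it simply cites Linnell \cite{MR1242889} (and refers to \cite{Lueck09}*{Remark 4.11}) and then treats the general case through the quite different machinery of Theorem~\ref{con:at}. So you should be compared against that generalization.

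Specializing Theorem~\ref{con:at} to $\lcm(G)=1$ (whence $\Delta^+=\{1\}$, a single $P^1=1$, and $L_1=1$), the paper's route for ``skew field $\Rightarrow$ Atiyah'' is the chain \ref{item:At-art}$\Rightarrow$\ref{item:KKG}$\Rightarrow$\ref{item:GKG}$\Rightarrow$\ref{item:quant}: pass through $K_0(D(K[G]))$ and $G_0(D(K[G]))$ being generated by the finite subgroups, then induce a finitely presented $K[G]$-module to $D(K[G])$ and read off the dimension. Your argument short-circuits this: over a skew field $\ker A$ is free of rank $r$, flatness of $\UG$ over the skew field $D(K[G])$ identifies the induced kernel with $\UG^r$, and $\dim_G=r$. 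These are the same idea in different clothes, but yours avoids any $K$-theory. The interesting divergence is in the reverse direction. The paper (via Theorem~\ref{con:at}\,\ref{item:quant}$\Rightarrow$\ref{item:At-art}) first proves $\EKG=D(K[G])$ using the quantization, invokes \cite{L+S2}*{Theorem 2.7} to get that $D(K[G])$ is semisimple Artinian, and then uses a Kaplansky-density and center-valued-trace argument to control the structure of the simple summand. You instead embed $D(K[G])$ in Cohn's rational closure of $K[G]$ in $\UG$, linearize an arbitrary matrix $B$ over $D(K[G])$ by the Schur complement identity to a matrix $\mathcal M$ over $K[G]$, conclude $\dim_G\ker B\in\Z$, and finish with von Neumann regularity of $\UG$ (nonzero, zero-kernel elements are invertible) plus division-closedness. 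This rational-closure/linearization route is essentially Linnell's original approach, is more elementary, and is entirely $\C$-valued; it does not, however, generalize directly to $\lcm(G)>1$, where the center-valued trace and the $K_0$/$G_0$ arguments of Theorem~\ref{con:at} become necessary. Two minor technical points you gloss over but which are standard and fine: the identity $\dim_G\ker(A\colon l^2(G)^n\to l^2(G)^n)=\dim_G\ker(A\colon\UG^n\to\UG^n)$ (cf.\ \cite{MR1885124}), and that ``non-zero-divisor in $\UG$'' is equivalent to ``has zero kernel'' (which in a finite von Neumann setting follows from $\dim_G\ker a=\dim_G\ker a^*$, or from von Neumann regularity once one knows $a$ is left-regular).
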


The appealing feature of this theorem is that it provides a canonical
over-ring, namely $D(K[G])$ of $K[G]$ which should be a skew field, provided $G$ is
torsion free. Observe that this implies in particular that $K[G]$ has no
non-trivial zero-divisors.  For more information on this, see
\cite{Lueck09}*{Remark 4.11}.

Part of the motivation for the work at hand was the question of how to generalize
Theorem \ref{theo:skew_field} if $\lcm(G)>1$. It turns out that one expects
that $D(K[G])$ is semisimple Artinian. In the situation at hand this means that
$D(K[G])$ is a finite direct sum of matrix rings over skew fields. This is
proved in many cases e.g.~in \cite{MR1242889}. 

The present paper \remts{gives} a very precise (conjectural) description of
$D(K[G])$, and if it is satisfied we call $D(K[G])$ \emph{Atiyah-expected
  Artinian}: the 
lattice of finite subgroups and their $K$-linear representations 
give a precise prediction into which matrix summands $D(K[G])$ decomposes
and the size of the corresponding matrices. The precise formula is a bit
cumbersome, so we don't give it here but refer to Definition
\ref{def:Atiyah-Artinean}.

One of our main theorems is the precise generalization of Theorem
\ref{theo:skew_field}.
\begin{theorem}
Let $G$ be a discrete group with $\lcm(G)<\infty$ and let $K$ be a
subfield of $\mathbb{C}$ closed under complex conjugation.
Then $K[G]$ satisfies the center-valued Atiyah conjecture if and only if
  $D(K[G])$ is Atiyah-expected Artinian.
\end{theorem}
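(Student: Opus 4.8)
The plan is to mimic the proof of Theorem~\ref{theo:skew_field} but keep track of the center-valued information throughout. The structural backbone is the natural map $K[G]\to D(K[G])\subset\UG$ together with the fact that $\UG$ is a von Neumann regular ring on which the center-valued trace $\tru$ extends; for any finitely generated projective $D(K[G])$-module $M$ one gets a well-defined center-valued dimension $\dimu(M)\in\Zen(\NG)$, additive on short exact sequences and invariant under isomorphism, and by Linnell's results $D(K[G])$ is already known to be semisimple Artinian in the cases we care about, so its Wedderburn decomposition $D(K[G])\iso\bigoplus_{j}\Mat_{n_j}(D_j)$ exists and its finitely generated modules are classified by the vector of multiplicities of the simple summands. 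First I would show that the center-valued Atiyah conjecture (values in $L_K(G)$) is equivalent to the assertion that $\dimu$ takes values in $L_K(G)$ on \emph{all} finitely generated projective $D(K[G])$-modules: one direction is immediate since kernels of matrices over $K[G]$ give such modules (the kernel of $A\colon l^2(G)^n\to l^2(G)^n$, as a Hilbert module, matches $\ker(A\colon D(K[G])^n\to D(K[G])^n)$ after the base change, by the standard dimension-comparison argument used for the classical conjecture), and conversely every f.g.\ projective $D(K[G])$-module is a direct summand of a free one, hence the image of an idempotent matrix, which after clearing denominators can be taken over $K[G]$, so its dimension is a difference of two kernel-dimensions of matrices over $K[G]$.

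Next I would analyze what ``$\dimu$ takes values in $L_K(G)$ on all f.g.\ projective modules'' means in terms of the Wedderburn decomposition. Because $\dimu$ is additive and the projective modules are generated (as a monoid, up to the usual group-completion) by the simple summands $S_j$ of $D(K[G])$, the condition is equivalent to $\dimu(S_j)\in L_K(G)$ for every $j$ together with the requirement that these values, with the correct multiplicities, account for \emph{all} of $L_K(G)$ — i.e.\ that the $\dimu(S_j)$ generate $L_K(G)$ as a group and no more. This is precisely where the definition of \emph{Atiyah-expected Artinian} (Definition~\ref{def:Atiyah-Artinean}) enters: that definition packages the combinatorial data of finite subgroups $H\subgroup G$ and their $K$-linear irreducible representations into a prediction of exactly which matrix-over-skew-field summands occur and with which matrix sizes $n_j$, and a direct computation (using $p_H=(\sum_{h\in H}h)/|H|$ and the center-valued traces of the central idempotents of $K[H]$ cut out by the irreducible $K$-representations) shows that the $\dimu$ of the corresponding simple summands are exactly the additive generators of $L_K(G)$ listed in Definition~\ref{def:LG_and_Atiyahconj}. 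So I would prove: $D(K[G])$ Atiyah-expected Artinian $\iff$ the Wedderburn data of $D(K[G])$ is the predicted one $\iff$ $\{\dimu(S_j)\}_j$ is exactly the set of standard generators of $L_K(G)$ $\iff$ $\dimu$ on f.g.\ projectives lands in (and surjects onto) $L_K(G)$.

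The one genuinely nontrivial input — and the main obstacle — is the backward direction, i.e.\ deducing from the center-valued Atiyah conjecture that $D(K[G])$ is semisimple Artinian \emph{at all} with the expected numerology, rather than merely that its projective modules have constrained dimensions. For this I would invoke the discreteness of $L_K(G)$ (Proposition~\ref{prop:LGdiscrete}): discreteness of the dimension values forces the lattice of f.g.\ projective $D(K[G])$-submodules of $D(K[G])^n$ to satisfy a chain condition, which, combined with the von Neumann regularity of $D(K[G])$ (inherited from $\UG$) and the fact that $D(K[G])$ is its own division closure, forces $D(K[G])$ to be semisimple Artinian; this is essentially Linnell's argument, which I would cite or adapt. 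Once semisimplicity is in hand, matching the matrix sizes and the list of skew-field summands to the prediction of Definition~\ref{def:Atiyah-Artinean} is a bookkeeping exercise driven by the equality of the $\dimu$-value sets, together with the observation that the center of $D(K[G])$ embeds in $\Zen(\NG)$ so that the central decomposition of $D(K[G])$ is literally read off from the central idempotents whose $\tru$-values generate $L_K(G)$. The forward direction — Atiyah-expected Artinian $\Rightarrow$ conjecture — is then comparatively soft: every f.g.\ projective is a sum of the (now explicitly known) simple summands, each of whose $\dimu$ is a listed generator of $L_K(G)$, so all kernel-dimensions lie in $L_K(G)$.
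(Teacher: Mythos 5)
Your plan has the right general shape (work with the Wedderburn decomposition of $D(K[G])$ and compare center-valued dimensions), but there are two genuinely wrong steps and one step whose difficulty is badly underestimated, so as written the argument does not go through.

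First, you assert that the von Neumann regularity of $D(K[G])$ is ``inherited from $\UG$.'' It is not: von Neumann regularity does not pass to subrings (already $K[G]\subset\UG$ is a counterexample), and for the division closure $D(K[G])$ there is no general structural reason it should be von Neumann regular. In fact regularity of $D(K[G])$ is a consequence of the conclusion you are trying to reach (semisimple Artinian rings are regular), so your argument is circular at precisely the hardest point. The paper instead first upgrades $D(K[G])$ to $\EKG$ using Theorem~\ref{theo:EKG_equal_DKG}, and then invokes a substantive external theorem (\cite{L+S2}*{Theorem~2.7}) which derives semisimple Artinian from boundedness of the ordinary $L^2$-Betti numbers; some such input is unavoidable here and cannot be replaced by a discreteness-plus-regularity argument.

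Second, your reduction ``every f.g.\ projective $D(K[G])$-module is the image of an idempotent matrix, which after clearing denominators can be taken over $K[G]$'' does not work. $D(K[G])$ is the division closure inside $\UG$, not an Ore localization of $K[G]$, so its elements are not fractions $ab^{-1}$ with $a,b\in K[G]$; and even in the Ore case clearing a denominator of an idempotent destroys idempotency. What you actually need is Lemma~\ref{lem:range_of_dim_on_EKG} (from \cite{L+S2}): the additive subgroup of $\Zen(\NG)$ generated by $\dimu(x\UG^n)$ with $x$ a matrix over $K[G]$ agrees with the one generated by $x$ ranging over matrices over $\EKG$. This is a nontrivial lemma with a genuine proof, not a denominator-clearing remark.

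Third, identifying the matrix sizes as exactly $L_j$ is not ``bookkeeping.'' Granted semisimplicity, one must show both that the simple modules of $P^jD(K[G])P^j$ are not too large (so that $L_j\mid l$, which follows from the existence of the irreducible projections $Q_\alpha$ over finite subgroups and a Bezout/Chinese-remainder combination achieving the value $P^j/L_j$), and that they are not too small (so that $l\mid L_j$, which uses Lemma~\ref{lem:range_of_dim_on_EKG} together with the Atiyah hypothesis to bound $\dimu$ of the irreducible below by $L_j^{-1}P^j$); the paper also needs the similarity argument via Kaplansky density and \cite{L+S2}*{Lemma~2.12} to compare submodules inside $D(K[G])$ rather than only inside $\UG$. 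Your forward direction (Atiyah-expected Artinian $\Rightarrow$ conjecture) is essentially fine, but the paper phrases the whole equivalence as a cycle through $K_0$ and $G_0$ (Theorem~\ref{con:at}), using Moody's induction theorem upstream; you should at least be aware that this $K$-theoretic packaging is what makes the inheritance arguments in the later sections (for class $\CCC$) run smoothly.
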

Indeed, we show in Theorem \ref{con:at} that these two properties are also
equivalent to the property that $K_0(D(K[G]))$ is generated by the images of
$K_0(K[H])$ as $H$ runs over the finite subgroups of $G$.

\begin{definition}
  Given a discrete group $G$ with $\lcm(G)<\infty$, let $\Delta^+(G)$
denote the
  maximal finite normal subgroup, and let $\Delta(G)$ denote the \emph{finite conjugacy
    center}, i.e.~the set of those elements of $G$ which have only a finite
  number of conjugates.

  Indeed, by \cite{Passman}*{\S 1}, $\Delta(G)$ is a normal subgroup of
  $G$. Recall that the product of two normal subgroups is a normal subgroup,
  therefore, as
  $\lcm(G)<\infty$, $\Delta^+(G)$ makes sense. Note that $\Delta^+(G)\subset
  \Delta(G)$, indeed, using \cite{Passman}*{Lemma 19.3} it is exactly the
  subset of all elements of finite order in $\Delta(G)$.
\end{definition}

In the special case $\Delta^+(G)=\{1\}$, we have that $D(K[G])$ is Atiyah-expected
Artinian if and only 
if it is an $\lcm(G)\times\lcm(G)$-matrix ring over a
skew field, and by Theorem \ref{con:at} this is equivalent to the
center-valued Atiyah conjecture
(which in this case is implied by the usual Atiyah conjecture, as the relevant
part of $\Zen(\NG)$ is $\complexs[\Delta^+(G)]$). This special case (and slightly
more general situations) have already been covered in \cite{L+S2}, but without
the use of the center-valued trace. It turns out that the general case
requires this more refined dimension function. However, much of our arguments
for Theorem \ref{con:at} follow closely the arguments of \cite{L+S2}. 

In \cite{L+S2}, a variant of the division closure, namely the ring $\EKG$ is
introduced and used (compare Definition \ref{def:EKG}). It is closed under
adding central idempotents in $\UG$ 
which generated the same submodules as elements already in the ring. We expect
that this actually coincides with $D(K[G])$. 

\begin{theorem}\label{theo:EKG_equal_DKG}
  If $\lcm(G)<\infty$, $K$ is a subfield of $\complexs$ which is
closed under complex conjugation
  and $G$ satisfies the center-valued Atiyah conjecture, then $\EKG=D(K[G])$.
\end{theorem}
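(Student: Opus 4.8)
The plan is to show the two rings $\EKG$ and $D(K[G])$ coincide by sandwiching: since both are built up from $K[G]$ inside $\UG$ by closure operations, and $\EKG$ allows a superset of the operations used to form $D(K[G])$ (closure under inverses, plus adjoining certain central idempotents generating submodules already present), we automatically have $D(K[G]) \subseteq \EKG$. So the content is the reverse inclusion $\EKG \subseteq D(K[G])$: one must check that $D(K[G])$ is already closed under the extra idempotent-adjoining operation defining $\EKG$. The key input is the center-valued Atiyah conjecture, which by the preceding theorems tells us that $D(K[G])$ is Atiyah-expected Artinian, i.e.\ semisimple Artinian with a very explicit decomposition into matrix rings over skew fields, with prescribed sizes coming from the finite subgroups.

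First I would recall from \cite{L+S2} the precise definition of $\EKG$ (our Definition~\ref{def:EKG}) and isolate exactly which central idempotents $e \in \UG$ one is permitted to adjoin to an intermediate ring $R$ with $K[G] \subseteq R \subseteq \EKG$: namely those $e$ for which $e\UG^n$ (or the relevant submodule) agrees with $rR^m$ for some matrix $r$ over $R$ — the "same submodule already generated" condition. I would then observe that in a semisimple Artinian ring $S = \bigoplus_i \Mat_{n_i}(D_i)$, every idempotent that is a finite $\UG$-submodule-generator of a submodule defined over $S$ is already an element of $S$: idempotents in $\UG$ correspond to direct summands of $\UG^k$, and a summand that is "seen" by a matrix over $S$ must be the image of an idempotent matrix over $S$, and the associated central support idempotent lies in $\Zen(S)$. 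Concretely, since $D(K[G])$ is semisimple Artinian (by the center-valued Atiyah conjecture and Theorem~\ref{con:at}), it contains all its own central idempotents, and any central idempotent of $\UG$ that one would adjoin in forming $\EKG$ cuts out a submodule that is already a direct summand visible inside $D(K[G])$, hence corresponds to a genuine central idempotent of $D(K[G])$ itself. Therefore $D(K[G])$ is closed under the $\EKG$-operation, so $\EKG \subseteq D(K[G])$, giving equality.

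The step I expect to be the main obstacle is making rigorous the claim that the central idempotents adjoined in the inductive construction of $\EKG$ do not "escape" $D(K[G])$ — i.e.\ that at every stage of the transfinite/inductive construction, the ring $R_\alpha$ built so far is contained in $D(K[G])$, so that the submodule-matching condition is tested against matrices over a subring of the semisimple Artinian ring $D(K[G])$. This requires an induction showing $R_\alpha \subseteq D(K[G])$ for all $\alpha$: the base case $K[G] \subseteq D(K[G])$ is clear, the inverse-closure step is clear because $D(K[G])$ is itself division-closed, and the idempotent-adjoining step is exactly where one invokes that $D(K[G])$ is semisimple Artinian to conclude the relevant central idempotent already lies in $D(K[G])$. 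One must be careful that the "submodule generated" in the definition of $\EKG$ is the $\UG$-submodule, so the matching happens inside $\UG^n$; the point is that a $\UG$-direct summand of $\UG^n$ which equals $M \otimes_{S} \UG$ for a submodule $M \subseteq S^n$ over a semisimple Artinian subring $S \subseteq \UG$ is itself extended from $S$, so its idempotent (and its central support) descends to $S$. Handling the bookkeeping of central supports across the matrix algebra summands, and verifying the precise form of the $\EKG$-operation from \cite{L+S2} is compatible with this descent, is the technical heart; everything else is formal.
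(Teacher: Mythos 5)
The central flaw is a circularity. You invoke Theorem~\ref{con:at} to obtain that $D(K[G])$ is semisimple Artinian from the center-valued Atiyah conjecture. But in the paper's logical order, the proof of Theorem~\ref{theo:EKG_equal_DKG} is given \emph{before} Theorem~\ref{con:at}, and the implication \ref{item:quant}~$\Rightarrow$~\ref{item:At-art} in Theorem~\ref{con:at} begins by citing Theorem~\ref{theo:EKG_equal_DKG} (``Our assumption implies by Theorem~\ref{theo:EKG_equal_DKG} that $\EKG=D(K[G])$''), and only then deduces the semisimple Artinian property from the $\EKG$-version of the Artinian result in \cite{L+S2}. So your plan uses the conclusion it is trying to prove.

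Even granting the semisimple structure, the step you yourself flag as ``the technical heart'' is not secured. A central idempotent $e$ of $\UG$ with $e\UG = r\UG$ for some $r$ in a subring need not be central (or even lie) in $D(K[G])$: it could be a strictly finer central idempotent of $\UG$ refining a primitive central idempotent of $D(K[G])$. Your sketch about ``descent of central supports'' does not rule out this possibility, and matching a right $\UG$-ideal with $r\UG$ does not by itself force the central support of $r\UG$ to descend to a central idempotent of the Artinian subring.

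The paper's actual proof is non-circular and much more direct, working with the center-valued trace rather than the ring structure. Given any central idempotent $e$ of $\UG$ lying in $\EKG$, its spectral projections lie in $\Zen(\NG)$, so $e$ is affiliated to $\Zen(\NG)$; being a (bounded) idempotent, $e\in\Zen(\NG)$ and hence $\tru(e)=e$. On the other hand, by Lemma~\ref{lem:range_of_dim_on_EKG}, $\tru(e)=\dimu(e\UG)$ lies in the additive group generated by $\dimu(\ker A)$ for $A$ over $K[G]$, which by the center-valued Atiyah conjecture is contained in $L_K(G)\subset\Zen(K[\Delta^+])$. Therefore $e\in K[\Delta^+]\subset K[G]\subset D(K[G])$, so the $\mathcal{C}$-operation in Definition~\ref{def:EKG} never adjoins anything new and the inductive construction of $\EKG$ stabilizes at $D(K[G])$. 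This observation --- that every such $e$ already lies in $K[\Delta^+]$ --- is the key input your proposal is missing.
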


As the second main result of the paper we establish the center-valued Atiyah
conjecture for 
certain classes of groups (namely almost all for which the original Atiyah
conjecture is known).  The algebraic closure of $\mathbb{Q}$ will be
denoted $\overline{\mathbb{Q}}$.

\begin{theorem}\label{theo:good_groups} 
Let $K$ be a subfield of $\mathbb{C}$ which is closed
under complex conjugation.
The center-valued Atiyah conjecture  \remts{over $K$}  is
  true for the following groups $G$:
  \begin{enumerate}
  \item\label{item:linnellsC} all groups $G$ which belong to Linnell's class
    of groups $\CCC$ of Definition \ref{def:CCC}, in 
    particular all free by elementary amenable groups $G$.
  \item\label{item:braids_and_others} if $K$ is contained
in $\overline{\mathbb{Q}}$,  all elementary amenable extensions
  of 
\begin{itemize}
%\item all right-angled Coxeter groups
\item pure braid groups
\item  right-angled Artin groups 
\item  primitive link
  groups
\item virtually cocompact special groups, where a ``cocompact special groups''
  is a fundamental group of a compact special cube complex ---this class of
  groups contains Gromov
  hyperbolic groups which act cocompactly and properly on CAT(0) cube complexes,
  fundamental groups of compact hyperbolic $3$-manifolds with empty or
  toroidal boundary, and Coxeter groups
  without a Euclidean triangle Coxeter subgroup,
\item or of products of the above.
\end{itemize}

  \end{enumerate}
\end{theorem}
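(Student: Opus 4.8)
The plan is to reduce both parts to Theorem~\ref{con:at}: for a group $G$ with $\lcm(G)<\infty$ and $K$ as in the statement, the center-valued Atiyah conjecture over $K$ holds for $G$ if and only if $D(K[G])$ is Atiyah-expected Artinian, equivalently if and only if $K_0(D(K[G]))$ is generated by the images of the maps $K_0(K[H])\to K_0(D(K[G]))$ with $H\subgroup G$ a finite subgroup. Throughout it is convenient to reformulate the conclusion using that the center-valued trace classifies projections in a finite von Neumann algebra up to Murray--von Neumann equivalence: the center-valued Atiyah conjecture over $K$ for $G$ amounts to saying that for every $A\in\Mat_n(K[G])$ the projection $\pr_{\ker A}$ becomes, after stabilization, Murray--von Neumann equivalent in a matrix algebra over $\NG$ to an orthogonal sum of (images of) projections lying in $\Mat_m(K[H])$ for various finite subgroups $H\subgroup G$.

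For part~\ref{item:linnellsC} I would argue by transfinite induction along the construction of Linnell's class $\CCC$ in Definition~\ref{def:CCC}. The base cases are the free groups and the finite groups. If $G$ is free then $\lcm(G)=1$, and by the strong Atiyah conjecture for free groups together with Theorem~\ref{theo:skew_field} the ring $D(K[G])$ is a skew field, which is exactly the Atiyah-expected Artinian shape in the torsion-free case. If $G$ is finite then $\NG=\UG=\C[G]$, the ring $D(K[G])$ equals the semisimple Artinian ring $K[G]$, and for $A\in\Mat_n(K[G])$ the module $\ker A\cap K[G]^n$ is a direct summand of $K[G]^n$, so $\pr_{\ker A}$ is similar to an idempotent over $K[G]$ and hence, using that $K$ is closed under complex conjugation (so that the orthogonal projection onto a $K$-rational invariant subspace again has entries in $K[G]$), $\tru_G(\pr_{\ker A})$ is a non-negative integral combination of $\tru_G$ of projections in $K[G]$, i.e.\ lies in $L_K(G)$.

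For the inductive steps one has the directed union operation and the extension operation --- if $1\to H\to G\to Q\to 1$ is exact with $H\in\CCC$ and $Q$ elementary amenable (in particular $Q$ finite), then $G\in\CCC$. For a directed union $G=\bigcup_i G_i$: a matrix over $K[G]$ already lives over some $K[G_i]$; by the inductive hypothesis its kernel projection is, after stabilization, Murray--von Neumann equivalent inside a matrix algebra over $\NG_i$ to a sum of images of projections over $K[H]$ for finite subgroups $H\subgroup G_i\subgroup G$; the same equivalence holds over $\NG$, so $\tru_G(\pr_{\ker A})\in L_K(G)$. For the extension operation one uses the standard transfinite description of elementary amenable groups to peel $Q$ off one layer at a time, reducing (with further directed unions absorbed as above) to the two cases $Q$ finite and $Q\cong\Z$. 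The case $Q$ finite is the passage to a finite-index over-group: one works with $\Mat_{[G:H]}(D(K[H]))$ and its twisted group-ring structure over $G/H$, and checks that being Atiyah-expected Artinian and the reformulated conclusion survive. The case $Q\cong\Z$ is the crossed-product-and-Ore-localization step of Linnell: $D(K[G])$ is obtained from $D(K[H])\ast\Z$ by inverting its non-zero-divisors in $\UG$ (and the intermediate ring $\EKG$ is a convenient bookkeeping device, cf.\ Theorem~\ref{theo:EKG_equal_DKG}), and one must show this sends Atiyah-expected Artinian rings to Atiyah-expected Artinian rings while controlling how the resulting idempotents pair with $\tru_G$. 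This closely follows \cite{L+S2}, the novelty being that the center-valued trace must be carried through the whole induction rather than only the case $\Delta^+(G)=\{1\}$, and I expect the $Q\cong\Z$ and $Q$ finite steps to be the principal obstacle: since $\Zen(\NH)$ does not map into $\Zen(\NG)$ one cannot simply restrict traces, and instead one has to use the description $\tru_G=E_{\Zen(\NG)}$ as a conditional expectation, the tower of conditional expectations, and the explicit crossed-product picture to locate $\tru_G(\pr_{\ker A})$ relative to $L_K(G)$.

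For part~\ref{item:braids_and_others} the strategy is approximation. Each group $N$ in the list --- pure braid groups, right-angled Artin groups, virtually cocompact special groups (which up to finite index embed into right-angled Artin groups by Haglund--Wise), primitive link groups, and finite products of these --- is, possibly after passing to a characteristic finite-index subgroup, residually torsion-free nilpotent; so for an elementary amenable extension $1\to N\to G\to A\to 1$ one may choose, as is standard for these groups (taking the normal core in $G$ of the terms of a suitable characteristic filtration), a chain $N=N_0\supseteq N_1\supseteq\cdots$ with each $N_i$ normal in $G$, $\bigcap_iN_i=\{1\}$, and each $N/N_i$ elementary amenable (indeed nilpotent-by-finite). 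Then $G/N_i$, being elementary-amenable-by-elementary-amenable, is elementary amenable, hence lies in $\CCC$, so by part~\ref{item:linnellsC} it satisfies the center-valued Atiyah conjecture over $K$; thus for a finite free $G$-CW-complex $X$ the center-valued $L^2$-Betti numbers of the finite free $G/N_i$-CW-complex $X/N_i$ lie in $L_K(G/N_i)$. Now I would invoke Knebusch's approximation theorem \cite{Knebusch}: the center-valued $L^2$-Betti numbers of $X$ arise as the limit over the residual chain $\{N_i\}$ of those of $X/N_i$ under the natural maps, and the images of the subgroups $L_K(G/N_i)$ all lie in $L_K(G)\subseteq\Zen(\NG)$; since $L_K(G)$ is discrete by Proposition~\ref{prop:LGdiscrete}, hence closed, the limit lies in $L_K(G)$, which proves the conjecture for $G$. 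The hypothesis $K\subseteq\overline{\mathbb{Q}}$ enters only through Knebusch's theorem, whose proof requires coefficients in an algebraic number field; the remaining tasks --- verifying the stated residual properties of the individual groups and that the quotient maps carry the $L_K(G/N_i)$ into $L_K(G)$ --- I expect to be routine given part~\ref{item:linnellsC}.
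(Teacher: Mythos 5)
Your overall strategy for part~\ref{item:linnellsC} (transfinite induction along the building moves of $\CCC$, reduction via Theorem~\ref{con:at}) and your strategy for part~\ref{item:braids_and_others} (discreteness of $L_K(G)$ plus Knebusch's approximation theorem along a residual chain of normal subgroups with elementary amenable quotients) match the paper. But in part~\ref{item:linnellsC} you have a genuine gap exactly where you flag ``the principal obstacle'': the extension step. You propose to reduce to $Q\cong\Z$ and $Q$ finite and then analyze the crossed product $D(K[H])\ast Q$ and its Ore localization by hand, carrying the conditional expectation $\tru_G$ through, and you explicitly admit you don't know how to do this. The paper bypasses this analysis entirely by using the $G_0$-theoretic equivalent formulation, Theorem~\ref{con:at}\ref{item:GKG}: it suffices to show $\bigoplus_{F\subgroup G\text{ finite}}G_0(K[F])\to G_0(D(K[G]))$ is onto. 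For $E$ finitely generated virtually abelian (the case to which the general elementary amenable case reduces by transfinite induction and directed unions), the proof of \cite{MR1242889}*{Lemma~5.3} already shows, via Moody's induction theorem \cite{Moody}*{Theorem~1}, that $\bigoplus_{F\subgroup E\text{ finite}}G_0(D(K[\pi^{-1}(F)]))\to G_0(D(K[G]))$ is onto; composing with the inductive hypothesis for each $\pi^{-1}(F)$ finishes. This is the insight your proposal is missing: once the conjecture is reformulated as surjectivity in $G_0$, Moody's theorem does all the work of the $Q$-finite and $Q\cong\Z$ steps, and no explicit crossed-product or trace computation is needed. Relatedly, your base cases are free and finite groups, but the induction actually needs the conjecture for \emph{finitely generated virtually free} groups as a black box (Lemma~\ref{lem:finite}, drawn from \cite{MR1242889}*{Prop.~5.1(i), Lemma~5.2(ii)}); this is again a $G_0$-surjectivity statement, not something you get by hand from the free and finite cases without the extension step you couldn't complete.

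For part~\ref{item:braids_and_others} your route is the paper's, but two points you call ``routine'' carry real content in Proposition~\ref{prop:inverse_limit} and should not be waved away. First, to compare $L_K(G/N_i)$ with $L_K(G)$ there is no canonical map $\Zen(\mathcal{N}(G/N_i))\to\Zen(\NG)$; one needs the hypothesis that every finite subgroup of $G/N_i$ lifts isomorphically to $G$, and one must prove that for $i$ large the projections $\pi_i$ restrict to isomorphisms $\Delta^+(G)\to\Delta^+(G/N_i)$, under which $L_K(G)$ and $L_K(G/N_i)$ are literally identified. Second, Knebusch's theorem gives only coefficientwise convergence $\langle\dim^u_{\mathcal{N}(G/N_i)}(\ker A[i]),\pi_i(g)\rangle\to\langle\dimu\ker A,g\rangle$; to upgrade this to convergence in $\Zen(\NG)$ and then to conclude from discreteness, one first uses the Atiyah conjecture for $G/N_i$ to see the limit is supported on $\Delta^+(G)$ (a finite set), so that all but finitely many coefficients vanish and coefficientwise convergence becomes norm convergence. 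These are precisely the steps carried out in Proposition~\ref{prop:inverse_limit}, and your proposal should at least acknowledge that this is where the condition on lifting finite subgroups enters.
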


\begin{question}
  Missing in the above list are congruence subgroups of $\SL_n(\integers)$ and
  finite extensions thereof. Note that the usual Atiyah conjecture for these
  groups, 
  as long as they are torsion free, is proved in \cite{MR2279234}. For
  torsion-free groups, the center-valued Atiyah conjecture is not stronger
  than the usual Atiyah conjecture. However, it would be interesting to
  generalize the work of \cite{MR2279234} to certain extensions which are not
  torsion free, and then (or along the way) to deal with the center-valued
  Atiyah conjecture for these.  
\end{question}

Recall that the center-valued Atiyah conjecture for a group $G$ only makes an
assertion when $\lcm(G)<\infty$. For the proof of \ref{item:linnellsC} of
Theorem \ref{theo:good_groups} we closely follow the method of
\cite{MR1242889}, making use of the equivalent algebraic formulations of the
Atiyah conjecture of Theorem \ref{con:at}. Indeed, we show that the conjecture
is stable under extensions by torsion-free elementary amenable groups. We
actually show (and use) slightly more refined stability properties.

For \ref{item:braids_and_others} of Theorem \ref{theo:good_groups} we use the
approximation theorem for the center-valued $L^2$-Betti numbers,
\cite{Knebusch}*{Theorem 3.2}. Because of the discreteness of the possible
center-valued $L^2$-Betti numbers, the Atiyah conjecture for a suitable
sequence of quotients implies the Atiyah conjecture for the group itself. We
follow here the general idea as already applied in \cite{Schick} and for more
general coefficient rings in \cite{DLMSY}. That this idea can be used for the
class of groups listed in \ref{item:braids_and_others} was shown for the pure
braid groups in \cite{L+S1}, for primitive link groups in \cite{MR2415028} and
for right-angled Coxeter and Artin groups in \cite{LinnellOkunSchick}, and for
cocompact special groups by Schreve in \cite{Schreve} (who uses
fundamentally the geometric insights of Haglund-Wise \cite{Haglund-Wise}, and
develops further the methods of \cite{LinnellOkunSchick}). Agol \cite{Agol}
shows in breakthrough work that 
Gromov hyperbolic cocompact CAT(0) cube  groups are virtually cocompact
special; with Bergeron-Wise' construction of a cocompact action of a
hyperbolic $3$-manifold group on a CAT(0) cube complex \cite{Bergeron-Wise}
this implies that hyperbolic $3$-manifold groups are virtually cocompact
special.

\section{Preliminaries \remts{on rings associated to groups}}\label{sec:prelim}

\subsubsection*{$\UG$, $D(K[G])$ and traces on these}

        \begin{definition}\label{def:traces}
            Let $G$ be a discrete group. The center-valued-trace is
the uniquely
            defined $\mathbb{C}$-linear map 
\[\tru:\NG\rightarrow\Zen(\NG)\] such that for
            $a,b\in \NG$ and $c\in\ZG$, we have
             \begin{itemize}
                    \item $\tru(ab)=\tru(ba)$;
                    \item $\tru(c)=c$;
                    \item $\tru(a)\in (\Zen(\mathcal{N}(G)))^+$ if
$a\in (\mathcal{N}(G))^+$.
             \end{itemize}
        
            The trace can be extended  to $\Mat_d(\NG)$ by taking
            $\tru:=\tru\otimes\tr_{\Mat_d(\C)}$  
            (by abuse of notation), with $\tr_{\Mat_d(\C)}$ the non-normalized
            trace on $\Mat_d(\C)$.

            If $P\in \Mat_d(\NG)$ is a projector with image the (Hilbert
            $\NG$-module) $V$, set $\dimu(V):=\tru(P)$.
         \end{definition}

That a unique such trace exists is established e.g.~in
\cite{Kadison2}*{Chapter 8}.

Later, we want to apply the trace also for the division closure. Recall that
we have (by definition) the following diagram of inclusions of rings
\begin{equation*}
  \begin{CD}
    K[G] @>>> \NG\\
    @VVV @VVV \\
    D(K[G]) @>>> \UG.
  \end{CD}
\end{equation*}

Given a finitely presented $K[G]$-module $M$, represented by $A\in \Mat_{k\times
  l}(K[G])$, i.e.~with exact sequence $K[G]^l\xrightarrow{A} K[G]^k \to M\to 0$, the
induced modules $M\tensor_{K[G]} \NG$, $M\tensor_{K[G]}\UG$, $M\tensor_{K[G]}D(K[G])$
are also finitely presented with the same presenting matrix $A$. The standard
theory of Hilbert $\NG$-modules gives a center-valued dimension for each
finitely presented $\NG$-module, in particular for $M\tensor_{K[G]} \NG$, and
$\dimu(M\tensor_{K[G]} \NG) =k-\dimu(\ker(A))$ in the above situation (compare
\cite{MR1474192}). In \cite{MR1885124}, this dimension is extended to finitely
presented $\UG$-modules, of course in such a way that the value is unchanged
if we induce up from $\NG$ to $\UG$. More precisely, \cite{MR1885124} describes
the extension of dimensions based on arbitrary $\complexs$-valued traces on
$\NG$, this implies easily the corresponding extension for $\dimu$.

\subsubsection*{The central idempotent division closure $\EKG$}

\begin{definition}\label{def:EKG}
  Let $R$ be a subring of the ring $S$ and let $C = \{e \in S \mid e$ is a
  central idempotent of $S$ and $eS = rS$ for some $r \in R\}$.  Then we
  define
  \[
  \mathcal{C}(R,S) = \sum_{e\in C} eR,
  \]
  a subring of $S$.  In the case $S = \mathcal{U}(G)$, we write
  $\mathcal{C}(R)$ for $\mathcal{C}(R,\mathcal{U}(G))$.  For each ordinal
  $\alpha$, define $\mathcal{E}_{\alpha}(R,S)$ as follows:
  \begin{itemize}
  \item $\mathcal{E}_0(R,S) = R$;
  \item $\mathcal{E}_{\alpha+1}(R,S) =
    \mathcal{D}(\mathcal{C}(\mathcal{E}_{\alpha}(R,S),S),S)$;
  \item $\mathcal{E}_{\alpha}(R,S) = \bigcup_{\beta < \alpha}
    \mathcal{E}_{\beta}(R,S)$ if $\alpha$ is a limit ordinal.
  \end{itemize}
  Then $\mathcal{E}(R,S) = \bigcup_{\alpha} \mathcal{E}_{\alpha}(R,S)$.  Also
  in the case $R = K[G]$ where $G$ is a group and $K$ is a subfield of
  $\mathbb{C}$, we  write $\EKG$ for
  $\mathcal{E}(K[G],\mathcal{U}(G))$.
\end{definition}

\begin{conjecture}
  Let $G$ be a discrete group and $K\subset\complexs$ a subfield. Then
  $D(K[G])=\EKG$, at least if $\lcm(G)<\infty$. 
\end{conjecture}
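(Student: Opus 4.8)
The plan is to reduce the conjecture to a single structural property of $D(K[G])$ and to identify that property as the real difficulty.

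\textbf{Reduction to closure under $\mathcal{C}$.} The inclusion $D(K[G])\subseteq\EKG$ is immediate, since $\mathcal{C}$ only enlarges its argument: $D(K[G])=\mathcal{D}(K[G],\UG)\subseteq\mathcal{D}(\mathcal{C}(K[G],\UG),\UG)=\mathcal{E}_1(K[G],\UG)\subseteq\EKG$. For the reverse inclusion I would prove the equivalent statement that $D(K[G])$ is already stable under the operation $\mathcal{C}$, i.e.
\[
\mathcal{C}(D(K[G]),\UG)=D(K[G]).
\]
Given this, $\mathcal{E}_\alpha(K[G],\UG)\subseteq D(K[G])$ for all ordinals $\alpha$ follows by transfinite induction: the case $\alpha=0$ is $K[G]\subseteq D(K[G])$; at a successor, using monotonicity of both $\mathcal{C}(-,\UG)$ and $\mathcal{D}(-,\UG)$ together with $\mathcal{D}(D(K[G]),\UG)=D(K[G])$,
\[
\mathcal{E}_{\beta+1}(K[G],\UG)=\mathcal{D}\big(\mathcal{C}(\mathcal{E}_\beta(K[G],\UG),\UG),\UG\big)\subseteq\mathcal{D}\big(\mathcal{C}(D(K[G]),\UG),\UG\big)=D(K[G]);
\]
and limit stages are unions. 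Hence $\EKG=\bigcup_\alpha\mathcal{E}_\alpha(K[G],\UG)\subseteq D(K[G])$, so the two rings coincide.

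\textbf{Closure under $\mathcal{C}$ from von Neumann regularity.} Next I would observe that $\mathcal{C}(D(K[G]),\UG)=D(K[G])$ holds as soon as $D(K[G])$ is von Neumann regular. Indeed, let $e\in\UG$ be a central idempotent with $e\UG=r\UG$ for some $r\in D(K[G])$; we must place $e$ in $D(K[G])$. Choose $s\in D(K[G])$ with $rsr=r$ and set $p:=rs\in D(K[G])$, an idempotent. From $pr=r$ one gets $r\UG\subseteq p\UG$, and from $p=rs\in r\UG$ one gets $p\UG\subseteq r\UG$, so $p\UG=r\UG=e\UG$. But two idempotents of $\UG$ generating the same right ideal, one of which is central, must be equal: $e\in p\UG$ gives $pe=e$, $p\in e\UG$ gives $ep=p$, and $ep=pe$ by centrality, so $p=e$. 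Hence $e=p\in D(K[G])$. Note that this step uses neither $\lcm(G)<\infty$ nor that $K$ is closed under complex conjugation --- the whole burden of the conjecture is carried by the regularity of $D(K[G])$.

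\textbf{The obstacle.} Thus everything comes down to proving that $D(K[G])$ is von Neumann regular (presumably this is exactly where $\lcm(G)<\infty$, and possibly the reality condition on $K$, must be used), and this is the genuinely hard part. It is known whenever $G$ satisfies the center-valued Atiyah conjecture, because $D(K[G])$ is then Atiyah-expected Artinian, hence semisimple Artinian, hence regular; the point of the conjecture is to obtain regularity \emph{without} first proving the Atiyah conjecture, and without it one has essentially no control over the internal structure of $D(K[G])$. An equivalent formulation of the obstacle: show that $D(K[G])$ equals the rational closure of $K[G]$ in $\UG$ --- a rational closure inside the von Neumann regular ring $\UG$ should itself be von Neumann regular, so the issue is that one does not know that inverting single elements of $K[G]$ already inverts every square matrix over $D(K[G])$ that becomes invertible over $\UG$. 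The two approaches I see are: (a) a direct ring-theoretic argument producing, for each finitely generated one-sided ideal of $D(K[G])$, an idempotent generator, exploiting that $\UG$ is unit-regular and directly finite together with special features of $K[G]$ such as the projections $\tfrac{1}{\abs{H}}\sum_{h\in H}h$ for finite $H\subgroup G$; or (b) extending the transfinite induction along Linnell's class $\CCC$, where regularity of $D(K[G])$ is already available, by means of the center-valued approximation theorem \cite{Knebusch} to all groups with $\lcm(G)<\infty$. In either case the crux is precisely to show that a division closed subring of a von Neumann regular ring is, in this group-ring situation, again von Neumann regular.
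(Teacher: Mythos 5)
The statement you are addressing is labelled a \emph{Conjecture} in the paper, and the paper does not prove it unconditionally; what it establishes is the conditional Theorem~\ref{theo:EKG_equal_DKG} (assuming $\lcm(G)<\infty$, $K$ closed under complex conjugation, and the center-valued Atiyah conjecture for $G$). You correctly do not claim a proof either, and your reduction is sound: the inclusion $D(K[G])\subseteq\EKG$, the transfinite induction via monotonicity of $\mathcal{C}(-,\UG)$ and $\mathcal{D}(-,\UG)$, and the derivation of $\mathcal{C}(D(K[G]),\UG)=D(K[G])$ from von Neumann regularity via the idempotent $p=rs$ are all correct. This is a genuinely different conditional route from the paper's. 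In the proof of Theorem~\ref{theo:EKG_equal_DKG} the key input is Lemma~\ref{lem:range_of_dim_on_EKG}: any central idempotent $e$ of $\UG$ appearing in $\EKG$ lies in $\Zen(\NG)$ and so satisfies $\tru(e)=e$, while the center-valued Atiyah conjecture forces $\tru(e)\in L_K(G)\subset K[\Delta^+]$, hence $e\in K[G]\subset D(K[G])$ --- so the $\mathcal{C}$-step never leaves $D(K[G])$. Your argument replaces this trace/quantization input by the purely ring-theoretic hypothesis that $D(K[G])$ is von Neumann regular; in the paper's framework that regularity is itself a downstream consequence of the center-valued Atiyah conjecture via Atiyah-expected Artinian (Theorem~\ref{con:at}), so both sufficient conditions currently flow from the same source, but yours asks for strictly less and isolates the minimal structural property needed. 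That is worth recording explicitly. One caution on your closing speculation: option (b) as stated overreaches --- the approximation theorem of \cite{Knebusch} combined with the $\CCC$-machinery yields the center-valued Atiyah conjecture (hence $D(K[G])=\EKG$) only for groups in $\CCC$ and for suitable residually-$\CCC$ groups as in Proposition~\ref{prop:inverse_limit}, not for arbitrary $G$ with $\lcm(G)<\infty$. The genuine obstacle remains exactly what you name: proving that the division closure of $K[G]$ inside the regular ring $\UG$ is itself von Neumann regular, without first proving the Atiyah conjecture.
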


We cite some properties of $\EKG$ from \cite{L+S2} which will be useful
later. Indeed, we generalize from the canonical trace to the center-valued
trace, but the proofs literally also cover this more general situation. 
\begin{lemma}\label{lem:range_of_dim_on_EKG}
(cf.~\cite{L+S2}*{Lemma 2.4})
  The following additive subgroups of $\Zen(\NG)$ coincide:
  \begin{multline*}
    \langle \dimu(x\UG^n)\mid x\in \Mat_n(K[G]), n\in\naturals\rangle\\
    =
    \langle \dimu(x\UG^n)\mid x\in \Mat_n(\EKG), n\in\naturals\rangle
  \end{multline*}
\end{lemma}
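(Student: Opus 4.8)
The plan is to show both inclusions between the two additive subgroups. The inclusion ``$\subset$'' is immediate, since $K[G]\subset\EKG$, hence $\Mat_n(K[G])\subset\Mat_n(\EKG)$, and every generator on the left appears among the generators on the right. For the reverse inclusion ``$\supset$'', the key point is to relate, for $x\in\Mat_n(\EKG)$, the submodule $x\UG^n$ to a submodule generated by a matrix already over $K[G]$, up to pieces whose dimensions lie in the left-hand group.

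The natural approach is a transfinite induction along the filtration $\EKG=\bigcup_\alpha\mathcal{E}_\alpha(K[G],\UG)$ of Definition \ref{def:EKG}. The induction hypothesis is that for every $n$ and every $x\in\Mat_n(\mathcal{E}_\alpha(K[G],\UG))$ the dimension $\dimu(x\UG^n)$ lies in the group generated by the $\dimu(y\UG^m)$ with $y\in\Mat_m(K[G])$. The limit-ordinal step is trivial since any finite matrix over the union already lies over some $\mathcal{E}_\beta(K[G],\UG)$ with $\beta<\alpha$. For the successor step $\mathcal{E}_{\alpha+1}=\mathcal{D}(\mathcal{C}(\mathcal{E}_\alpha,\UG),\UG)$ one treats the two operations $\mathcal{C}(-,\UG)$ and $\mathcal{D}(-,\UG)$ separately. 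For the division closure $\mathcal{D}(R,\UG)$: since $\UG$ is von Neumann regular (it is the classical ring of quotients of the semihereditary, in fact von Neumann regular after completion, situation — concretely $\UG$ is self-injective and von Neumann regular), every finitely generated submodule of $\UG^n$ is a direct summand, and the dimension $\dimu$ on finitely generated projective $\UG$-modules is already determined by the submodule $x\UG^n$ regardless of whether the entries of $x$ lie in $R$ or in $\mathcal{D}(R,\UG)$; more precisely, for $x\in\Mat_n(\mathcal{D}(R,\UG))$ one can clear the finitely many inverses occurring in the entries of $x$, enlarging $n$, to realize $x\UG^n$ as $y\UG^m$ for a matrix $y$ over $R$ — this is the standard ``adding inverses changes the ring but not the lattice of finitely generated submodules of a free module'' argument, and it is exactly the mechanism underlying \cite{L+S2}*{Lemma 2.4}. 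For the central-idempotent closure $\mathcal{C}(R,\UG)=\sum_{e\in C}eR$: here an entry of $x$ is a finite $R$-combination $\sum e_i r_i$ of products with central idempotents $e_i$ satisfying $e_i\UG=s_i\UG$ for some $s_i\in R$; since the $e_i$ are central idempotents realized by elements of $R$, $x\UG^n$ is built from the submodules $e_iR$-pieces, and again $\dimu(x\UG^n)$ can be written as an integer combination of $\dimu$-values of submodules generated by matrices over $R$ — using additivity of $\dimu$ over the orthogonal decomposition provided by the central idempotents, and the fact that $\dimu(eR\cdot\UG)=\dimu(s\UG)$ when $e\UG=s\UG$.

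The main obstacle, and the step that needs the most care, is the bookkeeping in the $\mathcal{C}$-step: one must verify that the \emph{center-valued} dimension (not just the scalar trace) is additive across the decomposition induced by the central idempotents and is compatible with passing from $e$ to the element $s\in R$ with $e\UG=s\UG$. This is where the remark preceding the lemma is invoked — the proofs in \cite{L+S2} for the scalar trace ``literally also cover'' the center-valued case because the only inputs used are additivity of $\dimu$ under direct sums, its invariance under isomorphism of finitely generated projective $\UG$-modules, and the structure of $\UG$ as a von Neumann regular ring; none of these arguments uses anything specific to the complex-valued trace, so they transfer verbatim with $\dimu$ in place of $\dim_G$. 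Thus the proof is essentially a citation of \cite{L+S2}*{Lemma 2.4} with the cosmetic replacement of the scalar dimension by the center-valued one, the transfinite induction going through unchanged.
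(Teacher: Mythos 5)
Your proposal is correct and takes the same route as the paper, which in fact gives no proof at all: the sentences immediately preceding the lemma state that it is Lemma 2.4 of \cite{L+S2} with the scalar trace $\dim_G$ replaced by $\dimu$, and that the proof there transfers verbatim because it uses only additivity, isomorphism-invariance of the dimension on finitely generated projective $\UG$-modules, and the von Neumann regularity of $\UG$, none of which is special to the $\complexs$-valued trace. Your reconstruction of the underlying L+S2 argument (transfinite induction along the filtration $\mathcal{E}_\alpha(K[G],\UG)$, clearing inverses for the $\mathcal{D}$-step, additivity across central idempotents for the $\mathcal{C}$-step) supplies more detail than the paper does and correctly identifies the only point to verify for the center-valued generalization; just note that the phrase about realizing $x\UG^n$ as $y\UG^m$ should be read as an equality in the additive group of $\dimu$-values after bookkeeping, not a literal identification of submodules.
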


This has as an immediate corollary that $\EKG=D(K[G])$ if $K[G]$ satisfies the
center-valued Atiyah conjecture:
\begin{proof}[Proof of Theorem \ref{theo:EKG_equal_DKG}]
  Let $e\in \EKG$ be a central idempotent of $\UG$. Then all the spectral
  projections of $e$ lie in $\Zen(\NG)$, therefore $e$ is affiliated to
  $\Zen(\NG)$. Being an idempotent, even $e\in\Zen(\NG)$. Therefore, on the
  one hand, 
  $\tru(e)=e$ while, on the other hand by Lemma \ref{lem:range_of_dim_on_EKG}, 
  $\tru(e)=\dimu(e\UG)\in \remts{L_K}(G)$, in particular $e\in
  \Zen(K[\Delta^+])\subset K[\Delta^+]$.
\end{proof}

\begin{remark}
  The proof just given didn't need the full force of the center-valued Atiyah
  conjecture, only the statement that $\dimu(x\UG^n)\in
  \Zen(\NG)$ is supported only on elements of finite order, i.e.~lies in
  $\Zen(K[\Delta^+])$. 
\end{remark}

\subsubsection*{Approximation of the center-valued trace}

The following is a special case of
\cite{Knebusch}*{Theorem 3.2}  which will be used in the next section. 

\begin{theorem}\label{theo:approxi}
  Let $G$ be a discrete group with a sequence $G=G_0\supergroup G_1
\supergroup \cdots$ of
  normal subgroups with $\bigcap_{i\in \mathbb{N}} G_i=\{1\}$. 

                       Let $A\in \Mat_d(\overline{\Q}[G])$ and $g\in\Delta(G)$. Let
                       $A[i]\in \Mat_d(\overline \rationals [G/G_i])$ be the image
                       of $A$ under the map induced by the projection
                       $\pr_i\colon G\to G/G_i$.

                       Assume that all $G/G_i$ satisfy the  determinant bound
                       property \cite{DLMSY}*{Definition 3.1}, e.g.~are
                       elementary amenable (or more 
                       generally belong to the class $\mathcal{G}$ of groups
                       introduced in \cite{DLMSY}*{Definition 1.8} and corrected
                       in the errata to \cite{MR1828605} at arXiv:math/9807032, 
                       or are 
                       sofic, compare \cite{Elek+Szabo} and
                       \cite{Knebusch}*{Theorem 4.1}). Then
                       \begin{equation*}
                          \lim_{i\to\infty}\langle
                          \dim^u_{\mathcal{N}(G/G_i)}(\ker(A[i]))
                            ,\pr_i(g) \rangle_{l^2(G/G_i)}=\langle
                            \dimu\ker(A),g\rangle_{l^2(G)}. 
                       \end{equation*}
\end{theorem}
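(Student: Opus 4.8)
The plan is to deduce the center-valued approximation statement of Theorem \ref{theo:approxi} from the more general approximation theorem \cite{Knebusch}*{Theorem 3.2}, which (as the excerpt says) is the actual substance being cited. Concretely, I first recall what the full statement of \cite{Knebusch}*{Theorem 3.2} gives: for a matrix $A$ over $\overline{\rationals}[G]$ and a sequence of normal subgroups $G_i$ with trivial intersection, and under the determinant-bound hypothesis on the quotients $G/G_i$, one has convergence of the \emph{twisted} traces $\langle \dim^u_{\mathcal{N}(G/G_i)}(\ker A[i]), \pr_i(g)\rangle$ against a fixed group element, at least for $g\in\Delta(G)$ where the pairing with the center-valued dimension is well-defined. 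Since the excerpt explicitly describes Theorem \ref{theo:approxi} as ``a special case of \cite{Knebusch}*{Theorem 3.2}'', the first step is simply to point to that reference and verify that the hypotheses match: $\overline{\rationals}$-coefficients, normal chain with $\bigcap G_i=\{1\}$, and the determinant-bound property (for which I would recall the examples: elementary amenable groups, the class $\mathcal{G}$ of \cite{DLMSY}, sofic groups via \cite{Elek+Szabo} and \cite{Knebusch}*{Theorem 4.1}).

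Next I would address the one genuine technical point: why the pairing $\langle \dimu\ker(A), g\rangle_{l^2(G)}$ and its finite-level analogues are the correct objects, and why it suffices to test against $g\in\Delta(G)$. The center-valued dimension $\dimu\ker(A)$ lies in $\Zen(\NG)$; an element of $\Zen(\NG)$ is determined by its pairings $\langle \cdot, g\rangle_{l^2(G)}$ as $g$ ranges over $G$, and by the standard structure theory these pairings vanish unless $g$ has only finitely many conjugates, i.e.\ $g\in\Delta(G)$ — indeed the center of $\NG$ is built from $\Delta(G)$. So restricting to $g\in\Delta(G)$ loses no information, and on the finite quotients $\pr_i(g)$ is automatically in the (finite, hence entirely ``$\Delta$'') group $G/G_i$, so the finite-level pairings make sense verbatim. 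The key step is then to check that applying \cite{Knebusch}*{Theorem 3.2} to the particular functional ``pair against $g$'' yields exactly the displayed limit; this is essentially bookkeeping, translating between the general trace-approximation statement and this coordinate-wise reformulation.

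The main obstacle — to the extent there is one — is making sure that the class of groups named here (elementary amenable, the class $\mathcal{G}$ with its errata correction, sofic groups) really is covered by the determinant-bound hypothesis of \cite{Knebusch}*{Theorem 3.2}; this is where I would be careful to cite \cite{DLMSY}*{Definition 3.1 and Definition 1.8}, the errata to \cite{MR1828605}, and the sofic case from \cite{Elek+Szabo} together with \cite{Knebusch}*{Theorem 4.1}, rather than re-proving anything. Since all the analytic heavy lifting is in \cite{Knebusch}, the proof here is short: invoke that theorem, note that its hypotheses are met, and observe that specializing its conclusion to the linear functional ``$\langle -, g\rangle$'' on $\Zen(\mathcal{N}(G/G_i))$ for $g\in\Delta(G)$ gives precisely the asserted convergence. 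I would therefore present this as a one-paragraph proof reading roughly: ``This is the special case of \cite{Knebusch}*{Theorem 3.2} obtained by restricting attention to coefficients in $\overline{\rationals}$ and to the pairing against a single $g\in\Delta(G)$; the quotients $G/G_i$ satisfy the determinant-bound property by the cited results, so the hypotheses of \cite{Knebusch}*{Theorem 3.2} hold and its conclusion specializes to the displayed formula.''
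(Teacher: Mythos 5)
Your proposal matches the paper exactly: the paper gives no independent proof of Theorem \ref{theo:approxi} but simply presents it as a special case of \cite{Knebusch}*{Theorem 3.2}, with the determinant-bound hypothesis covered by the very references you list. Your additional remarks on why it suffices to pair against $g\in\Delta(G)$ are correct bookkeeping and consistent with how the theorem is used later in the paper.
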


\subsubsection*{Linnell's class $\CCC$}

        \begin{definition}\label{def:CCC} Let $\CCC$ denote the smallest class of groups which
            \begin{enumerate}
                \item contains all free groups,
                \item is closed under directed unions,
                \item satisfies $G\in \CCC$ whenever $H\lhd G$\,, $H\in\CCC$ and $G/H$ is elementary amenable.
            \end{enumerate} 
    
        \end{definition}

\section{Reformulation of the center-valued Atiyah conjecture}

    Let $G$ be a group with $\lcm(G) <\infty$.  We shall assume
that $K$ is
    a subfield of $\mathbb{C}$ which is closed under complex
conjugation.  Many of the arguments given below don't require this
assumption; however if $K$ is a subfield closed under complex
conjugation and $e$ is a central idempotent in $K[G]$, then $e$ is a
projection \cite{BPR10}*{Lemma 9.2(i)}.  Furthermore if $G$ is a
finite group and $A \in \Mat_n(K[G])$, then $\pr_{\ker A} \in
\Mat_n(K[G])$ (use \cite{BPR10}*{Proposition 9.3});
it is here where we are using the property that $K$ is
closed under complex conjugation.

    Recall that $\Delta^+$ is the (finite) normal subgroup consisting
of all elements of finite order and having only finitely many
conjugates.

    \begin{lemma}\label{lemma:xyz} Let $K\subset\complexs$ be
    a subfield which contains all $|\Delta^+|$-th roots of 1, and let
$c_G$ denote the number of finite conjugacy classes of elements of
finite order in $G$,
        i.e.~the dimension of $\Zen(\NG)\cap \Zen(K[\Delta^+])$.
        There is a finite set of primitive central projections
$\{U^1,\dots,
        U^{c_G}\}$ 
        of $\Zen(\NG)\cap \Zen(K[\Delta^+])\subset \Zen(K[G])$, given by
        \begin{equation*}
            U^i:=\sum_{ k \text{ s.t. }\exists g\in G:
                  gu_ig^{-1}=u_k  }u_k,
        \end{equation*}
   where $u_i$ are the primitive central idempotents of the semisimple
   Artinian ring $K[\Delta^+]$.
Furthermore $u_i=\frac{n_i}{|\Delta^+|}\sum_{s\in G}
\chi_i(s^{-1})s$, 
        with $n_i$ the dimensions of the irreducible representations (over
        $\mathbb{C}$) of $\Delta^+$ and $\chi_i$ the corresponding characters
        (extended by $0$ to all of $G$).
Moreover the $U^j$ form
        an orthogonal basis of the vector space $\Zen(\NG)\cap
        \Zen(\complexs[\Delta^+])$.  
\end{lemma}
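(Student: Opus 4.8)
The plan is to derive the whole statement from the structure of the finite group $\Delta^+:=\Delta^+(G)$, its $G$-conjugation action, and the standard description of $\Zen(\NG)$. First, since $K$ contains all $\abs{\Delta^+}$-th roots of $1$ and $\Delta^+$ has exponent dividing $\abs{\Delta^+}$, Brauer's splitting-field theorem gives that $K$ is a splitting field for $\Delta^+$; hence $K[\Delta^+]\iso\prod_i\Mat_{n_i}(K)$ with $n_i$ the degrees of the complex irreducible representations of $\Delta^+$, its primitive central idempotents $u_1,\dots,u_r$ coincide with those of $\complexs[\Delta^+]$, they are mutually orthogonal with $\sum_iu_i=1$, and they are given by the asserted character formula for the primitive central idempotents of a split group algebra (extending $\chi_i$ by $0$ just rewrites $\sum_{s\in\Delta^+}$ as $\sum_{s\in G}$). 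Each $u_i$ is self-adjoint, either from $\overline{\chi_i(s)}=\chi_i(s^{-1})$ or because a central idempotent over the conjugation-closed field $K$ is a projection (the fact recalled at the start of the section).

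Next I would bring in the $G$-action. Since $\Delta^+\normalsubgroup G$, conjugation makes $G$ act on $\Delta^+$, hence on $K[\Delta^+]$, by ring automorphisms, so it permutes $\{u_1,\dots,u_r\}$; write $\sigma_g$ for the induced permutation, so $gu_ig^{-1}=u_{\sigma_g(i)}$. The orbit sums $U^i=\sum_{k:\,\exists g\in G,\;gu_ig^{-1}=u_k}u_k$ are then orthogonal self-adjoint idempotents with $\sum_iU^i=1$ (all inherited from the $u_k$); each $U^i$ is fixed by every $\sigma_g$, hence commutes with every $g\in G$, hence is central in $K[G]$; and for $x\in\C[G]$, commuting with $\C[G]$ forces commuting with all of $\NG$, because multiplication by a fixed bounded operator is weak-operator continuous and $\C[G]$ is weakly dense in $\NG$. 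So each $U^i$ is a projection lying in $\Zen(\NG)\cap\Zen(K[\Delta^+])\subset\Zen(K[G])$, with $U^iU^j=0$ for $i\ne j$.

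Then comes the identification of the ring and the dimension count. I would use that $\Zen(\NG)\cap\C[G]$ is spanned by the class sums $\widehat c=\sum_{x\in c}x$ over the finite conjugacy classes $c$ of $G$; intersecting with $\C[\Delta^+]$ leaves exactly the $\widehat c$ with $c\subseteq\Delta^+$, i.e.\ the finite conjugacy classes of finite-order elements of $G$. As any element of $\Zen(\NG)$ lying in $\complexs[\Delta^+]$ automatically centralizes $\complexs[\Delta^+]$, this shows $\Zen(\NG)\cap\Zen(\complexs[\Delta^+])=\Zen(\NG)\cap\complexs[\Delta^+]$ has $\complexs$-dimension $c_G$. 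On the other hand, an element $\sum a_iu_i\in\Zen(K[\Delta^+])$ with $a_i\in K$ lies in $\Zen(\NG)$ iff $(a_i)$ is $\sigma_g$-invariant, i.e.\ constant on $G$-orbits; hence $\Zen(\NG)\cap\Zen(K[\Delta^+])=\Span_K\{U^i\}$, and the same with $K$ replaced by $\complexs$. Being nonzero orthogonal idempotents the $U^i$ are linearly independent, so there are exactly $c_G$ of them and they form an orthogonal basis of $\Zen(\NG)\cap\Zen(\complexs[\Delta^+])$ (orthogonal for $\langle a,b\rangle=\tr_G(ab^*)$, since $\tr_G(U^iU^j)=\delta_{ij}\tr_G(U^i)$). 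Primitivity of each $U^i$ in $\Zen(\NG)\cap\Zen(K[\Delta^+])$ is then immediate: a nonzero idempotent $e$ of this ring with $eU^i=e$ has the form $\sum_jb_jU^j$ with $b_j\in\{0,1\}$, and $e=eU^i=b_iU^i$ forces $e=U^i$.

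None of this is deep: the idempotent/self-adjointness/centrality bookkeeping for the $U^i$ and the description of $\Zen(\NG)\cap\C[G]$ as the span of class sums are routine, and the only external inputs are Brauer's splitting-field theorem and the standard picture of $\Zen(\NG)$. The point I would flag is that it is invariance under conjugation by \emph{all} of $G$ --- not merely by $\Delta^+$, which acts trivially on $\Zen(K[\Delta^+])$ --- that cuts $\Zen(K[\Delta^+])$ down to $\Span_K\{U^i\}$; and a bonus of the argument in the third paragraph is that it re-proves the relevant case of Brauer's permutation lemma, namely that $c_G$ equals the number of $G$-orbits of irreducible characters of $\Delta^+$, so that one does not need to invoke that lemma separately.
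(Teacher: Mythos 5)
Your proposal follows essentially the same route as the paper's proof: Maschke/Brauer give the structure of $K[\Delta^+]$ and its primitive central idempotents via the character formula, the $G$-conjugation action permutes the $u_i$, and invariance under that action characterizes $\Zen(\NG)\cap\Zen(K[\Delta^+])$, so the orbit sums $U^i$ are the primitive idempotents. The paper leaves most of this implicit with a ``follows immediately,'' whereas you supply the bookkeeping (orthogonality, self-adjointness via $\overline{\chi_i(s)}=\chi_i(s^{-1})$, the weak-density step passing from centrality in $\C[G]$ to centrality in $\NG$, and the class-sum description of $\Zen(\NG)\cap\C[\Delta^+]$ giving the count $c_G$). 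One small caution: your fallback argument for self-adjointness of the $u_i$ invokes the fact about central idempotents over a conjugation-closed $K$; the lemma's hypothesis is only that $K$ contains the $|\Delta^+|$-th roots of unity, which need not make $K$ closed under complex conjugation, so you should rely on the character-formula argument (which does work, since the relevant character values and their conjugates lie in the cyclotomic subfield of $K$).
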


        \begin{proof}
By Maschke's theorem of standard representation theory, the algebra
$K[\Delta^+]$ is semisimple 
Artinian, compare \cite{Lang1}*{XVIII, Theorem 1.2}. Therefore it has finitely
many primitive central idempotents $u_i$.

Any algebra automorphism must permute the $u_i$, 
in particular the conjugation action of $G$. An element of $\Zen(K[\Delta^+])$
belongs to the 
center of $K[G]$ (and then also of $\NG$) if and only if it is invariant under
conjugation by elements of $G$. It follows
immediately that the $U^i$ are the primitive central idempotents of
$\Zen(\NG)\cap \Zen(K[\Delta^+])$, and furthermore they form an
orthogonal basis for $\Zen(\NG)\cap
        \Zen(\complexs[\Delta^+])$.

The formula for the $u_i$
is also standard, \cite{Lang1}*{XVIII, Proposition 4.4 and Theorem
11.4}.
        \end{proof}

\begin{lemma}\label{lemma:centralidempotents}
Let $K$ be a subfield of $\mathbb{C}$ and let $L/K$ be a finite
Galois extension of $K$ with Galois group $F$.  Let $G$ be a finite
group, let $\{e_1,\dots,e_n\}$ denote the primitive central
idempotents of $K[G]$, and let $\{u_1, \dots, u_m\}$ denote the
primitive central idempotents of $L[G]$.  Then $F$ acts as
automorphisms on $L[G]$ according to the rule $\theta \sum_{g \in G}
a_gg = \sum_{g \in G}\theta(a_g)g$ for $\theta \in F$.
The $u_i$ form an orthogonal set and $\langle
u_i,1\rangle = \langle \theta u_i,1\rangle$ for all $i$.  For each
$i$, define $N_i = \{j \in \mathbb{N} \mid e_iu_j = u_j\}$.  Then $F$
acts transitively on $\{u_j \mid j \in N_i\}$ and $e_i =
\sum_{j \in N_i} u_j$.
\end{lemma}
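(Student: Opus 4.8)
The plan is to use the standard correspondence between primitive central idempotents and simple (two-sided ideal) factors, together with Galois descent for group algebras. First I would observe that $L[G] = L \otimes_K K[G]$, so the $F$-action defined in the statement is just $\theta \otimes \mathrm{id}$, and it is $K[G]$-semilinear; the fixed ring $(L[G])^F$ equals $K[G]$ by the usual Galois-descent argument (a Galois extension $L/K$ satisfies $L^F = K$, and tensoring with the $K$-basis of $K[G]$ preserves this). Consequently $F$ permutes the primitive central idempotents $\{u_1,\dots,u_m\}$ of $L[G]$: any ring automorphism permutes the finite set of primitive central idempotents, because this set is intrinsically characterized (the $u_j$ are the minimal nonzero idempotents of the center $\Zen(L[G])$). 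For the orthogonality statement, $u_iu_j=0$ for $i\neq j$ and $u_i^2=u_i$ are immediate from primitivity; and $\langle u_i,1\rangle = \langle\theta u_i,1\rangle$ follows because $\theta$ fixes the identity coefficient up to applying $\theta\in F\subset\Aut(L)$ — more precisely $\langle \theta u_i,1\rangle = \theta(\langle u_i,1\rangle)$, and one must note $\langle u_i,1\rangle\in\Q\subset K$ since it equals $n_i^2/|G|$ for the corresponding simple factor, hence is fixed by $\theta$. (Alternatively one argues $\langle\cdot,1\rangle$ is the normalized trace, which is $\Aut$-equivariant and takes values giving equal dimensions on the $F$-orbit.)

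Next I would establish $e_i = \sum_{j\in N_i} u_j$. The central idempotent $e_i\in K[G]\subset L[G]$ decomposes in $L[G]$ as a sum of some of the $u_j$ — precisely those $u_j$ with $e_iu_j\neq 0$, i.e.\ $e_iu_j=u_j$, which is exactly the index set $N_i$. This gives $e_i=\sum_{j\in N_i}u_j$ directly. Since the $e_i$ are orthogonal and sum to $1$, the sets $N_i$ partition $\{1,\dots,m\}$.

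The remaining point — transitivity of $F$ on $\{u_j\mid j\in N_i\}$ — is the main obstacle, and is where Galois descent does its real work. Since $e_i\in K[G]=(L[G])^F$ is $F$-fixed and $F$ permutes the $u_j$, the action of $F$ preserves the partition, i.e.\ it permutes each block $\{u_j\mid j\in N_i\}$ among itself. To see transitivity on each block, let $f_i$ be the sum of the $u_j$ in a single $F$-orbit inside $N_i$; then $f_i$ is an $F$-fixed central idempotent, hence $f_i\in(L[G])^F=K[G]$, and $f_i$ is a subsum of $e_i$ consisting of central idempotents of $K[G]$-summands. Because $e_i$ is \emph{primitive} in $K[G]$, it cannot be written as a sum of two nonzero orthogonal central idempotents of $K[G]$; therefore $f_i=e_i$, forcing the orbit to be all of $N_i$. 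The one technical care needed here is the Galois-descent assertion $(L[G])^F=K[G]$ and the fact that an $F$-fixed idempotent of $L[G]$ lies in this fixed ring — both are routine but should be stated. I expect no serious difficulty beyond assembling these standard facts in the right order.
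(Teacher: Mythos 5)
Your proposal is correct and matches the paper's proof in all essentials: both rely on Galois descent to get $(L[G])^F = K[G]$, derive $e_i = \sum_{j\in N_i}u_j$ from primitivity of the $u_j$, prove transitivity by noting that the sum over an $F$-orbit is an $F$-fixed central idempotent $\leq e_i$ hence equals $e_i$ by primitivity of $e_i$, and establish $\langle u_i,1\rangle=\langle\theta u_i,1\rangle$ by observing the identity coefficient is rational. The only minor imprecision is the claim $\langle u_i,1\rangle=n_i^2/|G|$, which holds verbatim only when $L$ splits $G$ (in general it is a sum of such terms over a Galois orbit), but your fallback argument via rationality covers the general case and is exactly what the paper invokes.
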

\begin{proof}
This is well-known, and follows from Galois descent.
Note that $u_ie_j$ is a central idempotent in $L[G]$ and $u_i = u_ie_j
+ (1-e_j)u_i$.  It follows for all $i,j$, either $u_ie_j = 0$ or
$u_ie_j = u_i$, because $u_i$ is primitive.  It follows easily that
$e_i = \sum_{j\in N_i} u_j$.  Also $F$ acts on $\{u_j \mid j\in
N_i\}$, and the sum of the $u_j$ in an orbit is fixed by $F$ and is
therefore in $K[G]$.  Since $e_i$ is primitive, it follows that this
orbit must be the whole of $N_i$.  Finally if $e = \sum_{g\in G}
e_gg \in L[G]$ is an idempotent, then $e_1 \in \mathbb{Q}$ (by the character
formula of Lemma \ref{lemma:xyz}) and we see
that $\langle u_i,1\rangle = \langle \theta u_i,1\rangle$ for all $i$.
\end{proof}

    \begin{lemma}\label{lemma:xyz1} Let $K\subset\complexs$ be a subfield,
let $\omega$ be a primitive $|\Delta^+|$-root of 1 and
set $L = K(\omega)$.  Let $F$ denote the Galois group of $L$ over
$K$, and let $U^1,\dots, U^{c_{L[G]}}$ be the primitive central
projections of $\Zen(\NG)\cap \Zen(L[\Delta^+])\subset \Zen(L[G])$ as
described above in Lemma~\ref{lemma:xyz}.  There is a finite set of
primitive central projections
$\{P^1,\dots,
        P^{C_{K[G]}}\}$
        of $\Zen(\NG)\cap \Zen(K[\Delta^+])\subset \Zen(K[G])$, given by
        \begin{equation*}
            P^i:=\sum_{ k \text{ s.t. }\exists g\in G:
gp_ig^{-1}=p_k  }p_k,
        \end{equation*}
   where $p_i$ are the primitive central idempotents of the
semisimple
   Artinian ring $K[\Delta^+]$.  Set $N_i = \{j \in \mathbb{N} \mid P^i
U^j = U^j\}$.  Then
\[
P^i = \sum_{j \in N_i}  U^j
\]
and $F$ acts transitively on $\{U^j \mid j \in N_i\}$.
\end{lemma}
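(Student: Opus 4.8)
The plan is to deduce Lemma~\ref{lemma:xyz1} directly from Lemmas~\ref{lemma:xyz} and~\ref{lemma:centralidempotents} by a descent/averaging argument along the finite subgroups $H \subgroup G$, using that everything in sight is detected inside $\Delta^+ = \Delta^+(G)$. First I would apply Lemma~\ref{lemma:xyz} twice: once over $L = K(\omega)$, producing the primitive central idempotents $u_j$ of $L[\Delta^+]$ and the $G$-conjugation orbit sums $U^j$ that are the primitive central projections of $\Zen(\NG) \cap \Zen(L[\Delta^+])$; and once over $K$, producing the primitive central idempotents $p_i$ of $K[\Delta^+]$ and the orbit sums $P^i$. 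The point is that these two constructions are compatible, and the bridge is Galois descent over the \emph{finite} group $\Delta^+$, which is exactly what Lemma~\ref{lemma:centralidempotents} supplies.

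The key steps, in order. (1) Apply Lemma~\ref{lemma:centralidempotents} with the finite group taken to be $\Delta^+$ and the extension $L/K$ (which is Galois with group $F$ since $L = K(\omega)$ is a cyclotomic-type extension): this gives, for each $i$, a set $M_i = \{j \mid p_i u_j = u_j\}$ such that $p_i = \sum_{j \in M_i} u_j$, that $F$ permutes $\{u_j \mid j \in M_i\}$ transitively, and that $\langle u_j, 1\rangle = \langle \theta u_j, 1\rangle$ for all $\theta \in F$. (2) Observe that the $G$-conjugation action on $L[\Delta^+]$ and the $F$-action (acting on coefficients) commute, since one permutes group elements and the other acts on scalars; hence $G$-conjugation permutes the $u_j$ and this permutation is $F$-equivariant. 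Therefore, applying $G$-conjugation-orbit-sums to the relation $p_i = \sum_{j \in M_i} u_j$ and summing the $p_k$ in the $G$-orbit of $p_i$, one gets $P^i = \sum_{j} u_j$ where the sum is over all $j$ with $u_j$ in the $G$-orbit of some $u_k$ with $k \in M_i$; regrouping these $u_j$ into their $G$-conjugation orbits (i.e.\ into the $U^j$) gives $P^i = \sum_{j \in N_i} U^j$ with $N_i = \{j \mid P^i U^j = U^j\}$ exactly as claimed — here one uses that $P^i$ and $U^j$ are orthogonal projections, so $P^i U^j \in \{0, U^j\}$ by primitivity of $U^j$ in $\Zen(\NG)\cap\Zen(L[\Delta^+])$. (3) For transitivity of $F$ on $\{U^j \mid j \in N_i\}$: $F$ acts on the index set $\{u_j\}$ commuting with $G$-conjugation, hence descends to an action on the $G$-orbits, i.e.\ on the $\{U^j\}$; since $F$ already acts transitively on $\{u_j \mid j \in M_i\}$ and every $G$-orbit meeting $N_i$ meets $M_i$'s $G$-saturation, transitivity of $F$ on $\{U^j \mid j \in N_i\}$ follows from transitivity of $F$ on $\{u_j \mid j \in M_i\}$ together with the fact that $G$-conjugation is transitive within the relevant collection of $u_j$'s defining a single $U^j$. (4) Finally one should note the $p_i$ are indeed the primitive central idempotents of $K[\Delta^+]$ — that ring is semisimple Artinian by Maschke, as in the proof of Lemma~\ref{lemma:xyz} — so the statement is well-posed.

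The main obstacle I expect is bookkeeping rather than substance: carefully matching the two index sets so that ``$G$-orbit of $u_j$'' and ``the $U^k$ containing $u_j$'' are used consistently, and checking that $P^iU^j = U^j$ is equivalent to $u_j$ appearing in the expansion of some $P^i$-constituent $p_k$. One has to be a little careful that $F$-action and $G$-conjugation genuinely commute on $L[\Delta^+]$ — this is clear from the explicit formula $\theta\sum a_g g = \sum \theta(a_g) g$ versus $g_0 (\sum a_g g) g_0^{-1} = \sum a_g (g_0 g g_0^{-1})$ — and that $\Delta^+$ is normal in $G$ so that $G$-conjugation really does act on $L[\Delta^+]$. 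Once those compatibilities are in place, the identity $P^i = \sum_{j\in N_i} U^j$ is essentially forced by primitivity and orthogonality, and $F$-transitivity is inherited from Lemma~\ref{lemma:centralidempotents} by passing to $G$-orbits. No genuinely new idea beyond Lemmas~\ref{lemma:xyz} and~\ref{lemma:centralidempotents} is needed.
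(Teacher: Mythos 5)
Your proposal is correct and matches the paper's (one-line) proof, which simply deduces the lemma from Lemmas \ref{lemma:xyz} and \ref{lemma:centralidempotents}; your write-up just fills in the Galois-descent and $G$-orbit bookkeeping that the authors leave implicit. The only point worth flagging is that Lemma \ref{lemma:xyz} is stated under the hypothesis that the field contains all $|\Delta^+|$-th roots of unity, so "applying it over $K$" really means re-running the first part of its proof (which does not use that hypothesis) to see that the $P^i$ are the primitive central idempotents of $\Zen(\NG)\cap\Zen(K[\Delta^+])$.
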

\begin{proof}
This follows from Lemmas \ref{lemma:xyz} and
\ref{lemma:centralidempotents}.
\end{proof}

    \begin{lemma}\label{lemma:Atiyah}
    Let $H$ be a finite subgroup of $G$ which contains $\Delta^+$.
        For an irreducible projection $Q\in K[H]$ (in the sense that if
        $Q=Q_1+Q_2$ with projections in $Q_1,Q_2\in K[H]$ satisfying
        $Q_1Q_2=0$ then either $Q_1=0$ or $Q_2=0$)  we have $\tru(Q)\in
        \Zen(\NG)\cap \Zen(K[\Delta^+])\subset \Zen(\NG)$. More precisely, using
        the central projections $P^i$ of Lemma \ref{lemma:xyz1} we have
        \begin{equation}\label{eq:Atiyah}
            \tru(Q)  =% \sum_{j=1}^{C_G}\frac{\langle Q,P^j \rangle}{\langle P^j,P^j \rangle}P^j 
                     \frac{\dim_\complexs(Q\cdot
                       \complexs [H])\cdot |\Delta^+|}{|H|\cdot
                       \dim_\complexs(P^i\cdot
                       \complexs[\Delta^+])}P^i   
                     =\frac{\dim_{\NG}(Q\cdot l^2(G))}{\dim_{\NG}(P^i\cdot l^2(G))}P^i
        \end{equation}
        where $P^i$ is characterized by the
        property $QP^i=Q$.
    \end{lemma}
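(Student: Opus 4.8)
The plan is to reduce everything to the known behaviour of the center-valued trace on the finite group $H$ together with the structure of $\Zen(\NG)\cap\Zen(K[\Delta^+])$ described in Lemma \ref{lemma:xyz1}. First I would observe that since $H$ is finite and $K$ is closed under complex conjugation, $Q\in K[H]$ is a genuine projection and $\tru(Q)$ is computed entirely inside the finite von Neumann algebra $\NG$; moreover $\tru$ restricted to $\C[H]\subset\NG$ factors through the center-valued trace of the finite-dimensional algebra $\C[H]$ followed by the conditional expectation onto $\Zen(\NG)$, i.e.\ by averaging over $G$-conjugation. Concretely, for $a\in\C[H]$ one has $\tru(a)=\sum_i \frac{\tr_\C(a z_i)}{\tr_\C(z_i)}z_i$ where $z_i$ are the primitive central idempotents of $\C[H]$, and then projecting to $\Zen(\NG)$ replaces each $z_i$ supported on elements of finite order by the corresponding $G$-orbit sum. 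Since $Q$ is irreducible in $K[H]$, I would next argue that $Q$ lies under a single primitive central idempotent $z$ of $\C[H]$ — or rather that the central support of $Q$ in $K[H]$ is a single primitive central idempotent, which after scalar extension to $\C$ is a sum of a $\mathrm{Gal}$-orbit of primitive central idempotents of $\C[H]$, all giving the same value of $\dim_\C(z_i\C[H])$ and the same image in $\Zen(\NG)$ by Lemma \ref{lemma:centralidempotents}.

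The key step is then to identify that image. Because $H\supseteq\Delta^+$, the primitive central idempotents of $K[H]$ restrict/relate to those of $K[\Delta^+]$, and the ones that survive conjugation by $G$ (equivalently, that remain central in $\Zen(\NG)$) are precisely the $P^i$ of Lemma \ref{lemma:xyz1}. So $\tru(Q)$ is a positive scalar multiple of exactly one $P^i$, namely the unique one with $QP^i=Q$. To pin down the scalar, I would use the normalisation of $\tru$: applying $\tr_G$ (the ordinary trace, which is $\tr_G\circ\tru$) to both sides, the left side is $\tr_G(Q)=\dim_\C(Q\cdot\C[H])/|H|$ (the standard computation for a projection in the group ring of a finite subgroup, using $\tr_G(h)=\delta_{h,e}$), while the right side is $\lambda\cdot\tr_G(P^i)=\lambda\cdot\dim_\C(P^i\cdot\C[\Delta^+])/|\Delta^+|$. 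Solving for $\lambda$ gives the first equality in \eqref{eq:Atiyah}; the second equality is then just the dictionary $\dim_{\NG}(R\cdot l^2(G))=\tr_G(R)$ applied to $R=Q$ and $R=P^i$, since $l^2(G)$ is a free $\C[H]$-module of rank $|G|/|H|$ and a free $\C[\Delta^+]$-module of rank $|G|/|\Delta^+|$, making the ratios of $\dim_{\NG}$ equal the stated ratio of $\dim_\C$ values.

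The main obstacle I anticipate is the bookkeeping in the middle step: carefully justifying that an irreducible projection $Q\in K[H]$ has a \emph{single} $G$-conjugation-invariant primitive central idempotent of $K[\Delta^+]$ above it, and that this idempotent is precisely one of the $P^i$ — this requires combining the restriction behaviour of central idempotents from $K[H]$ down to $K[\Delta^+]$ (using that $\Delta^+$ is normal in $G$ and normal in $H$) with the Galois-descent picture of Lemma \ref{lemma:centralidempotents}, and checking that $QP^i=Q$ does single out a unique $P^i$. Once that is in place, everything else is the normalisation argument via $\tr_G$ and the free-module computation, which is routine. I would also want to double-check positivity: $\tr_G(Q)>0$ since $Q\neq 0$, so the scalar $\lambda$ is genuinely positive, consistent with $\tru$ taking values in $(\Zen(\NG))^+$.
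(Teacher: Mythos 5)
Your architecture has the right shape --- determine that $\tru(Q)$ is a scalar multiple of a single $P^i$, then pin down the scalar by applying $\tr_G = \tr_G \circ \tru$ to both sides --- and the final normalisation step ($\tr_G(Q)=\dim_\C(Q\cdot\C[H])/|H|$, $\tr_G(P^i)=\dim_\C(P^i\cdot\C[\Delta^+])/|\Delta^+|$, the induction formula for von Neumann dimensions) is exactly what the paper does. Your detour through the center-valued trace $T_H$ of the finite-dimensional algebra $\C[H]$ is correct (one can check $\tru\circ T_H=\tru$ on $\C[H]$ by pairing against $G$-class functions) but unnecessary; the paper instead writes the orthogonal-projection formula $\tru(Q)=\sum_j \frac{\langle Q,U^j\rangle}{\langle U^j,U^j\rangle}U^j$ in the orthogonal basis $\{U^j\}$ of $\Zen(\NG)\cap\Zen(L[\Delta^+])$ from Lemma~\ref{lemma:xyz1} directly.

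The genuine gap is in the middle step, exactly where you flag the obstacle, and the specific claim you lean on is false. Knowing $QP^i=Q$ for a unique $i$ (which does follow from $\sum_j P^j=1$ and irreducibility of $Q$) only places $\tru(Q)$ in the $P^i$-summand of $\Zen(\NG)\cap\Zen(K[\Delta^+])$; that summand is a field extension of $K$ of degree $|N_i|$, not the line $K\cdot P^i$, so this alone does not give $\tru(Q)=\lambda P^i$. Moreover, Galois-conjugate primitive central idempotents $z_{i_1},z_{i_2}$ of $\C[H]$ do \emph{not} have the same image under $\tru$: if $\theta(z_{i_1})=z_{i_2}$ then $\tru(z_{i_2})=\theta(\tru(z_{i_1}))$, a Galois conjugate, and in general these are multiples of \emph{different} $U^j$. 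What the paper supplies instead --- and what your plan is missing --- is the observation that $\langle Q,U^j\rangle$ and $\langle U^j,U^j\rangle$ are the coefficients of the identity element in idempotents of $L[H]$ and are therefore \emph{rational} (the character-formula input of Lemmas~\ref{lemma:xyz} and~\ref{lemma:centralidempotents}); since every $\theta$ in the Galois group fixes $Q\in K[H]$ and carries $QU^{j_1}$ to $QU^{j_2}$ for $j_1,j_2\in N_i$, these rational quantities are constant on the Galois orbit $N_i$, so the coefficient of $U^j$ in $\tru(Q)$ is independent of $j\in N_i$ and the sum collapses to a scalar multiple of $P^i=\sum_{j\in N_i}U^j$. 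You would need to graft this rationality-plus-equivariance argument onto your $T_H$ route before the $\tr_G$-normalisation step is meaningful.
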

        \begin{proof}
Let $\omega$ be a primitive $|\Delta^+|$-th root of 1, let $L =
K(\omega)$ and let $F$ denote the Galois group of $L/K$.
          The center-valued trace is obtained by orthogonal projection from
          $l^2(G)$ to the subspace of $l^2(\Delta)$ spanned by functions which
          are constant on $G$-conjugacy classes, using the standard embedding of
          $\NG$ 
          into $l^2(G)$. For $Q$, which is supported on group
          elements of finite order, therefore $\tru(Q)\in \complexs[\Delta^+]$.
Let $U^1,\dots, U^{c_G}$ and $P^1,\dots,P^{C_{K[G]}}$ be the primitive
central projections as described in Lemma \ref{lemma:xyz1}.
          Using the standard inner product on $\complexs [H]$ we obtain,
          using that $(U^1,\dots, U^{c_G})$ is an orthogonal basis of
          $\Zen(\NG)\cap 
          \Zen(L[H]) = \Zen(\NG)\cap \Zen(L [\Delta^+])$ 
          \begin{equation}\label{eq:trGQ}
            \tru(Q)=\sum_j \frac{\innerprod{Q,U^j}}{\innerprod{U^j,U^j}} U^j.
          \end{equation}
            Moreover, we have for each $j$ that $QP^j+Q(1-P^j)=Q$ and
            $QP^jQ(1-P^j)=0$, the latter because $P^j$ is central. Since 
            $Q$ is irreducible, 
            we get either $QP^j=Q$ or $QP^j=0$.
If $QP^i=Q$ we have $Q\sum_{j \in N_i}U^j = Q$ and $QU^j = 0$ for $j
\notin N_i$.  Also if $j_1,j_2 \in N_i$, then $\theta (QU^{j_1}) =
QU^{j_2}$ for some $\theta \in F$ and we see that $\langle QU^{j_1},
1\rangle =\langle QU^{j_2},1\rangle$, consequently $\langle
Q,U^j\rangle$ is independent of $j$ for $j\in N_i$.  Similarly
$\langle U^j,U^j\rangle$ is independent of $j$ for $j \in N_i$.
Thus $\langle Q,P^i\rangle = |N_i| \langle Q,U^j\rangle$,
$\langle P^i,P^i \rangle = |N_i| \langle U^j,U^j\rangle$ for $j \in
N_i$, hence
\[
\frac{\innerprod{Q,U^j}}{\innerprod{U^j,U^j}} =
\frac{\innerprod{Q,P^i}}{\innerprod{P^i,P^i}}.
\]
Substitute this in equation \eqref{eq:trGQ} together with
            \begin{align*}
                \langle Q,P^j\rangle    & =\langle QP^j,1\rangle =\langle
                Q,1\rangle  =\frac{\dim_\C(Q \cdot \mathbb{C}[H])}{|H|}\\
                \langle P^j,P^j\rangle  &
               = \innerprod{P^j,1}=\frac{\dim_\C(P^j \cdot \mathbb{C}[\Delta^+])}{|\Delta^+|}. 
              \end{align*}
              These formulas  follow from the character formula for
              projections or are directly obtained as follows:  for a
              projection $P\in\complexs [E]$ 
		   and a finite group $E$ we have $\innerprod{P,1}_{l^2(E)} 
              =\innerprod{Ph,h}_{l^2(E)}$ for all $h\in E$, therefore
              $\dim_{\complexs}(P\cdot \complexs [E])=\tr(P)=\sum_{h\in E}
              \innerprod{Ph,h}=\abs{E}\cdot \innerprod{P,1}$.

              Note, finally, that $\frac{\dim_\complexs(Q\cdot \complexs
                [H])}{\abs{H}} = \dim_{\NH}(Q\cdot l^2(H)) = \dim_{\NG}(Q\cdot
              l^2(G))$ by the induction rule for von Neumann dimensions.
        \end{proof}

    \begin{corollary}\label{corol:discrete}
      The additive subgroup $\remts{L_K}(G)$ of $\Zen(\NG)$ of Definition
      \ref{def:LG_and_Atiyahconj} is discrete.
    \end{corollary}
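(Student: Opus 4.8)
The plan is to trap $L_K(G)$ inside an explicit lattice living in the finite-dimensional subspace $\Zen(\NG)\cap\Zen(\complexs[\Delta^+])$ of $\Zen(\NG)$; once that is done, discreteness is immediate. The real content is to rewrite the generators of $L_K(G)$ in a form to which Lemma~\ref{lemma:Atiyah} applies, and then to read off that the ``denominators'' that occur are bounded.

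First I would normalise the generators. Since $K$ is a field, $L_K(G)$ is additively generated by the elements $\tru(P)$ with $P$ a projection in $K[H]$ and $H$ a finite subgroup of $G$. Given such a $P\in K[H]$, I replace $H$ by the finite subgroup $H\Delta^+$; this does not change $\tru(P)$, so we may assume $\Delta^+\subseteq H$. Because $K[H]$ is finite-dimensional over $K$, repeatedly writing a non-irreducible projection as an orthogonal sum of two non-zero projections in $K[H]$ terminates, and produces $P=Q_1+\dots+Q_r$ with each $Q_s$ an irreducible projection of $K[H]$ in the sense of Lemma~\ref{lemma:Atiyah}. By $\complexs$-linearity of $\tru$, it follows that $L_K(G)$ is generated by the $\tru(Q)$, where $Q$ runs through all irreducible projections in $K[H]$ and $H$ through all finite subgroups containing $\Delta^+$.

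Next I would feed each such $Q$ into Lemma~\ref{lemma:Atiyah}. That lemma produces a \emph{unique} one of the finitely many primitive central projections $P^1,\dots,P^{C_{K[G]}}$ of Lemma~\ref{lemma:xyz1} with $QP^i=Q$, together with the formula
\[
\tru(Q)=\frac{\dim_\complexs(Q\cdot\complexs[H])}{|H|}\cdot\frac{|\Delta^+|}{\dim_\complexs(P^i\cdot\complexs[\Delta^+])}\;P^i .
\]
Here $\dim_\complexs(Q\cdot\complexs[H])$ is a non-negative integer and $|H|$ divides $\lcm(G)$ (which is finite by the standing hypothesis), so the first factor lies in $\tfrac{1}{\lcm(G)}\integers$; the second factor depends only on $G$ and on the index $i$. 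Hence, setting $M_i:=\lcm(G)\cdot\dim_\complexs(P^i\cdot\complexs[\Delta^+])/|\Delta^+|$ — a quantity depending only on $G$ and $i$ — we obtain $\tru(Q)\in\tfrac{1}{M_i}\integers\cdot P^i$ for every irreducible projection $Q$ with $QP^i=Q$.

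Summing over the finitely many indices $i$, every generator of $L_K(G)$ lies in $\Lambda:=\sum_i \tfrac{1}{M_i}\integers\cdot P^i$, so $L_K(G)\subseteq\Lambda$. The $P^i$ are non-zero and $\complexs$-linearly independent — each is a sum over a subset of the orthogonal basis $\{U^j\}$ of $\Zen(\NG)\cap\Zen(\complexs[\Delta^+])$ from Lemma~\ref{lemma:xyz}, and distinct $P^i$ use disjoint subsets since they are mutually orthogonal (Lemma~\ref{lemma:xyz1}). Thus $\Lambda=\bigoplus_i\tfrac{1}{M_i}\integers\, P^i$ is a discrete subgroup of the finite-dimensional, hence closed, subspace $\Zen(\NG)\cap\Zen(\complexs[\Delta^+])$ of $\Zen(\NG)$, and therefore $\Lambda$ — and with it its subgroup $L_K(G)$ — is discrete in $\Zen(\NG)$. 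The one point that needs genuine care is the reduction step: one must split $P$ into irreducibles \emph{over $K[H]$} rather than over $\complexs[H]$, and must arrange $\Delta^+\subseteq H$, so that Lemma~\ref{lemma:Atiyah} applies and its values always land in the \emph{fixed} space $\Zen(\NG)\cap\Zen(K[\Delta^+])$, independent of $H$; after that, discreteness is just the observation that on each of the finitely many lines $\complexs P^i$ the denominators that occur are bounded by $\lcm(G)$ times a constant depending only on $i$, and everything else is formal.
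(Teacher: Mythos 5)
Your proof is correct and follows essentially the same route as the paper: reduce to irreducible projections over finite subgroups containing $\Delta^+$, apply Lemma \ref{lemma:Atiyah} to land in $\bigoplus_i \frac{1}{M_i}\integers\, P^i$, and conclude by discreteness of a lattice in the finite-dimensional space $\Zen(\NG)\cap\Zen(\complexs[\Delta^+])$. If anything, your explicit common denominator $M_i=\lcm(G)\dim_\complexs(P^i\cdot\complexs[\Delta^+])/|\Delta^+|$ is slightly more careful than the paper's bare assertion that the traces are integer multiples of $\lcm(G)^{-1}P^i$.
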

    \begin{proof}
      \remts{Recall that $F$ denotes the relevant subfield of $\complexs$ in the
      setup of Definition \ref{def:LG_and_Atiyahconj}}, namely $F$ is
the field of fractions of $K$. Given a finite subgroup $H$ of $G$ and a \remts{projection $P\in F[H]$},
      $\tru(P)$ is a positive integral linear combination of $\tru(Q_\alpha)$
      where $Q_\alpha \in \remts{F[H]}$ are irreducible projections, corresponding to
      the decomposition of $\im(P)$ into irreducible $\remts{F[H]}$-modules.

      It therefore suffices to check that the additive subgroup of
$\Zen(\NG)$
      generated by $\tru(Q)$ is discrete, where $Q$ runs through the
      irreducible projections in $\remts{F[H]}$ and $H$ runs through the finite
      subgroups of $G$. Increasing the field and increasing the finite
      subgroup has the only potential effect that an irreducible projection
      breaks up as a sum of new irreducible projections and therefore the
      subgroup generated by their center-valued
      traces increases. Therefore we may assume that these subgroups
contain $\Delta^+$ and that $\remts{F = \mathbb{C}}$.
By Lemma \ref{lemma:Atiyah}, these are all integer
      multiples of $\lcm(G)^{-1}P^i$ with the orthogonal basis
$(P^1,\dots,
      P^{c_G})$, therefore span a discrete subgroup of $\Zen(\NG)$.
    \end{proof}

    \begin{definition}\label{def:Atiyah-Artinean}
      Assume that $G$ is a discrete group with $\lcm(G)<\infty$ and that
      $K$ is a subfield of $\mathbb{C}$ which is closed under complex
conjugation.

      We say that $D(K[G])$ is \emph{Atiyah-expected Artinian} if it is a
      semisimple Artinian ring such that its primitive central idempotents are
      the central idempotents $P^1,\dots,P^{C_{K[G]}} \in
K[\Zen(K[\Delta^+])]$ of Lemma
\ref{lemma:xyz1},
      and if each direct summand $P^jD(K[G])P^j$ is an $L_j\times L_j$ matrix
      ring over 
      a skew field.

      Here, $L_j$ is determined as follows: consider all irreducible
      sub-projections $Q_\alpha\in K[H_\alpha]$ of 
      $P^j$ (i.e.~those satisfying $Q_\alpha P^j=Q_\alpha$), where $H_\alpha$ runs
      through all finite subgroups of $G$ containing $\Delta^+$.
By Lemma \ref{lemma:Atiyah},
      $\tru(Q_\alpha)=q_\alpha P^j$ for some rational number $q_\alpha$. Because
      there are only 
      finitely many isomorphism classes of finite subgroups of $G$, 
      formula \eqref{eq:Atiyah} shows that the collection of these rational
      numbers is finite. $L_j$ is the smallest integer such that each
      $q_\alpha$ is 
      an integer multiple of $\frac{1}{L_j}$. Explicitly,
            \begin{equation*}
                L_j=\frac{\dim_\C(P^j\cdot
                  \complexs[\Delta^+])\lcm(G)}{
\gcd\left( \dim_\C(P^j\cdot
                  \complexs[\Delta^+])\lcm(G),
                  \dim_\complexs(Q_\alpha\cdot\complexs
                  [H_\alpha])\frac{\lcm(G)}{|H_\alpha|} \abs{\Delta^+}\mid
                \alpha\right) } \in\Z.
            \end{equation*}
\end{definition}
\begin{proof}
  We have to show that the two descriptions of $L_j$ coincide, using
  Equation \eqref{eq:Atiyah}, i.e.~we have to find the smallest common
  denominator of all these fractions. We expand the denominators to the common
  value $\lcm(G)\cdot \dim_\complexs(P^j\cdot \complexs[\Delta^+])$, then we
  have to divide this by the greatest common divisor of this number and of
  all the new numerators.
\end{proof}

\begin{theorem}\label{con:at}
        Let $G$ be a discrete group, with $\lcm(G) < \infty$ and let $K\subset
        \C$ be a subfield closed under complex conjugation.
        The following statements are equivalent.
        \begin{enumerate}
            \item\label{item:At-art} $\DKG$ is Atiyah-expected Artinian as in
              Definition \ref{def:Atiyah-Artinean}.
\item\label{item:KKG} $\phi\colon \bigoplus_{E\le G\,:\,|E|<\infty} K_0(K[E])
  \rightarrow K_0(\DKG)$ is surjective and $D(K[G])$ is semisimple Artinian. 
 \item\label{item:GKG} $\phi\colon\bigoplus_{E\le G\,;\,|E|<\infty}
              G_0(K[E])\rightarrow G_0(D(K[G]))$ is surjective.
 \item\label{item:quant}
$KG$ satisfies the center-valued Atiyah conjecture.
            \end{enumerate}
            Recall here that, for a ring $R$, $K_0(R)$ is the Grothendieck group
            of finitely generated projective $R$-modules, whereas $G_0(R)$ is
            the Grothendieck group of arbitrary finitely generated
            $R$-modules. 
\end{theorem}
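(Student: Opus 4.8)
The plan is to prove the equivalence of the four statements in Theorem \ref{con:at} by establishing a cycle of implications, with the center-valued Atiyah conjecture \eqref{item:quant} as the natural anchor, since it is the most concrete. I would argue in the shape \eqref{item:quant} $\Rightarrow$ \eqref{item:At-art} $\Rightarrow$ \eqref{item:KKG} $\Rightarrow$ \eqref{item:GKG} $\Rightarrow$ \eqref{item:quant}, together with whatever shortcuts the structure theory of semisimple Artinian rings makes available (for instance, the passage between $K_0$ and $G_0$ is automatic once $D(K[G])$ is known to be semisimple Artinian, since then every module is projective).

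First I would show \eqref{item:quant} $\Rightarrow$ \eqref{item:At-art}. Assuming the center-valued Atiyah conjecture, I would first establish that $D(K[G])$ is semisimple Artinian: for any finitely presented module, its von Neumann dimension lies in the discrete set $L_K(G)$ (Corollary \ref{corol:discrete}), and discreteness forces the usual dimension-theoretic arguments (as in \cite{L+S2}, via Lemma \ref{lem:range_of_dim_on_EKG}) to produce a length bound, hence semisimplicity. The key input is that every central idempotent of $D(K[G])$ is then forced into $\Zen(K[\Delta^+])$ — exactly the computation in the proof of Theorem \ref{theo:EKG_equal_DKG} — so the primitive central idempotents of $D(K[G])$ are precisely the $P^i$ of Lemma \ref{lemma:xyz1}. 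It remains to identify the matrix size $L_j$ of the summand $P^jD(K[G])P^j$: each simple module of this summand has a von Neumann dimension which is a multiple of the minimal such dimension, and by Lemma \ref{lemma:Atiyah} the dimensions realized by irreducible projections $Q_\alpha\in K[H_\alpha]$ over finite subgroups are exactly $q_\alpha P^j$; the Atiyah conjecture forces these to be the \emph{only} dimensions that occur (up to integer multiples), so the greatest common divisor computation of Definition \ref{def:Atiyah-Artinean} yields $L_j$.

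Next, \eqref{item:At-art} $\Rightarrow$ \eqref{item:KKG} is essentially a matter of bookkeeping: if $D(K[G])=\bigoplus_j \Mat_{L_j}(\Delta_j)$ with the prescribed data, then $K_0(D(K[G]))\cong \integers^{C_{K[G]}}$ with the $j$-th coordinate recording the normalized dimension over $\Mat_{L_j}(\Delta_j)$, and the irreducible projections $Q_\alpha\in K[H_\alpha]$, whose classes lie in the image of $\phi$ from $K_0(K[H_\alpha])$, realize via Lemma \ref{lemma:Atiyah} exactly the generators $\frac{1}{L_j}$ of each coordinate group — by the very definition of $L_j$ as the relevant gcd. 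Hence $\phi$ is surjective. Then \eqref{item:KKG} $\Rightarrow$ \eqref{item:GKG} is immediate (over a semisimple Artinian ring $G_0=K_0$, and $G_0(K[E])$ surjects onto $K_0(K[E])$ since finite group algebras over subfields of $\complexs$ are semisimple), and for \eqref{item:GKG} $\Rightarrow$ \eqref{item:quant} I would note that for $A\in \Mat_n(K[G])$ the class of $\coker(A\otimes_{K[G]} D(K[G]))$ in $G_0(D(K[G]))$ lies in the image of $\phi$, and applying $\dimu$ (which factors through $G_0$ by additivity on short exact sequences of finitely presented modules) sends that image into $L_K(G)$ precisely because $\dimu$ of the image of any $K_0(K[E])$ is generated by the $\tru$ of projections in $K[E]$; since $\dimu(\ker A)=n-\dimu(\coker(A\otimes D(K[G])))$ and $n\cdot 1\in L_K(G)$, this gives the conjecture.

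The main obstacle I expect is the step \eqref{item:quant} $\Rightarrow$ \eqref{item:At-art}, specifically proving that $D(K[G])$ is semisimple Artinian (not merely that its dimension function is discrete) and that no ``exotic'' simple summands or central idempotents appear beyond those dictated by the finite subgroups. The discreteness of $L_K(G)$ gives a uniform bound on lengths of chains of submodules of finitely presented modules, which is the engine behind semisimplicity, but turning a bound on $L^2$-dimensions into a genuine Artinian/Noetherian conclusion for $D(K[G])$ requires the dimension-theoretic machinery of \cite{MR1885124} and the careful localization arguments of \cite{L+S2}; this is where the bulk of the technical work lies, and where the center-valued refinement (as opposed to the scalar-valued trace) is genuinely needed, because it is the center-valued dimension that detects the full decomposition into the summands $P^jD(K[G])P^j$.
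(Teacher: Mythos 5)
Your cycle of implications coincides with the paper's: the paper proves exactly \eqref{item:At-art} $\Rightarrow$ \eqref{item:KKG} $\Rightarrow$ \eqref{item:GKG} $\Rightarrow$ \eqref{item:quant} $\Rightarrow$ \eqref{item:At-art}, and your sketches of the first three arrows match the paper's arguments closely (row projectors and a B\'ezout/gcd argument for \eqref{item:At-art} $\Rightarrow$ \eqref{item:KKG}, $G_0=K_0$ over a semisimple Artinian ring for \eqref{item:KKG} $\Rightarrow$ \eqref{item:GKG}, and inducing up to $\UG$ and applying the extended $\dimu$ for \eqref{item:GKG} $\Rightarrow$ \eqref{item:quant}).

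The problem is in \eqref{item:quant} $\Rightarrow$ \eqref{item:At-art}, and specifically in the sentence claiming that ``every central idempotent of $D(K[G])$ is \ldots forced into $\Zen(K[\Delta^+])$ --- exactly the computation in the proof of Theorem \ref{theo:EKG_equal_DKG}.'' That computation does not apply here. The argument in Theorem \ref{theo:EKG_equal_DKG} takes an idempotent $e$ that is central \emph{in $\UG$}: only then do its spectral projections lie in $\Zen(\NG)$, forcing $e\in\Zen(\NG)$, after which the trace identity $\tru(e)=e$ and Lemma \ref{lem:range_of_dim_on_EKG} finish the job. An idempotent $\epsilon$ that is merely central in the subring $D(K[G])$ need not be central in $\UG$ (the center of a subring can be strictly larger than the restriction of the ambient center), so its spectral projections need not lie in $\Zen(\NG)$ and the identity $\tru(\epsilon)=\epsilon$ fails. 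This is exactly the ``exotic central idempotent'' problem you flag as the main obstacle --- but the reference you give does not resolve it, so the gap remains open in your proposal.

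The paper's actual route to primitivity of the $P^j$ is substantively different and cannot be shortcut this way. After establishing semisimplicity via \cite{L+S2}*{Theorem 2.7}, it builds, for each irreducible sub-projection $Q_\alpha$ with $\tru(Q_\alpha)=\tfrac{n_\alpha}{L_j}P^j$ and $d=\lcm_\alpha(n_\alpha)$, an explicit internal direct sum decomposition
\begin{equation*}
P^jD(K[G])^d=\bigoplus_{i=1}^{L_jd/n_\alpha}\tilde u_i P'_\alpha D(K[G])^d
\end{equation*}
using comparison of projections in $\UG$, the Kaplansky density theorem and \cite{L+S2}*{Lemma 2.12}. For a central sub-idempotent $\epsilon\le P^j$ in $D(K[G])$, it then computes $d\,\tru(\epsilon)=\tfrac{L_jd}{n_\alpha}\dimu(\epsilon P'_\alpha\UG^d)$ and uses Lemma \ref{lem:range_of_dim_on_EKG} plus \eqref{item:quant} to conclude $n_\alpha\tru(\epsilon)\in\integers P^j$ for every $\alpha$, hence $\tru(\epsilon)\in\integers P^j$ and $\epsilon\in\{0,P^j\}$. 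Even the identification $l=L_j$ for $P^jD(K[G])\cong \Mat_l(\text{skew field})$ requires a two-sided divisibility argument ($L_j\mid l$ from the decomposition above, and $l\mid L_j$ from discreteness of the dimension of the irreducible summand) that your sketch folds into a single gcd remark. You would need to supply this entire mechanism; without it the implication \eqref{item:quant} $\Rightarrow$ \eqref{item:At-art} is not established.
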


\begin{proof}[Proof of Theorem \ref{con:at}]
        \ref{item:At-art} $\implies$ \ref{item:KKG}: We use the notation of
        Definition \ref{def:Atiyah-Artinean}. Using the row projectors of
        matrix rings, there are projections
$x_1,\dots,x_{C_{K[G]}}\in
        D(K[G])$ which represent a $\integers$-basis of the free abelian group
        $K_0(D(K[G]))$, and $[P^i]=L_i[x_i]$ in $K_0(D(K[G]))$. We have to show
        that each $x_i$ is an integer linear combination of images of elements
        of $K_0(K[H_\alpha])$ with $H_\alpha$ finite. If $Q_\alpha\in K[H_\alpha]$
        is an irreducible sub-projection of $P^i$, then $\phi([Q_\alpha])$ is a
        multiple of $[x_i]$ in $K_0(D(K[G]))$, namely (comparing the
        center-valued dimensions which are defined for finitely generated
        projective $D(K[G])$-modules by the discussion of Section
        \ref{sec:prelim}) $\phi([Q_\alpha])=q_\alpha [P^i]$ if 
        $\tru(Q_\alpha)=q_\alpha P^i$. By the Chinese remainder theorem and
        the definition of $L_i$ as the smallest integers such that all the
        $q_\alpha$ are integer multiples of $L_i^{-1}$, also
        $[x_i]=L_i^{-1}[P^i]$ belongs to the image of $\phi$.

% (and without loss of
%         generality $\Delta^+\subgroup H_\alpha$). 

%         We look at the following map
%         \begin{equation*}
%             \sigma_2\colon K_0(\DKG) \to \ZZ(NG); 
%                     [p]       \mapsto         \tru(p)
%         \end{equation*}
%         This map is well defined and injective and fits the following
%         commuting diagram. 
%                     \[
%                     \begin{CD}
%                       \bigoplus_{E\le G:|E|<\infty}K_0(KE) @>{\phi}>>K_0(\DKG)\\
%                       @VV{\sigma_1}V @VV{\sigma_2}V\\
%                       \bigoplus_{E\le G:|E|<\infty}\ZZ(KE) @>{\psi}>> \ZZ(\NG).
%                     \end{CD}
%                         % \begin{diagram}
%                         %     \node{\colim_{E\le G:|E|<\infty}K_0(KE)}\arrow{e,t}{\phi}\arrow{s,r}{\sigma_1}\node{K_0(\DKG)}\arrow{s,r}{\sigma_2}\\
%                         %     \node{\bigoplus_{E\le G:|E|<\infty}\ZZ(KE)}\arrow{e,t}{\psi}\node{\ZZ(\NG)}
%                         % \end{diagram}
%                     \]
%         From (1) it follows that $\psi\circ\sigma_1\colon \bigoplus_{E\le
%           G\mid |E|<\infty}K_0(KE)\rightarrow \ZZ(\NG)$ maps surjectively onto  
%         the projections in $\ZZ(\NG)\cap\DKG$. The latter is the image of
%         $\sigma_2$. We obtain from the
%         injectivity of $\sigma_2$ the requested surjectivity of $\phi$.

\ref{item:KKG} $\implies$ \ref{item:GKG}: For a semisimple Artinian ring
        every finitely generated module 
        is projective, therefore $G_0 = K_0$ under the assumptions we make.

        \ref{item:GKG} $\implies$ \ref{item:quant}: Let $M$ be a finitely
        presented $K[G]$-module with presentation $K[G]^l\xrightarrow{A} K[G]^n\to
        M\to 0$, $A\in M_{n\times l}(K[G])$. Then  $M\tensor_{K[G]}
        D(K[G])$ is finitely generated, therefore by the assumption stably
        isomorphic to an 
        integer linear combination $\bigoplus a_i x_iD(K[G])$ with $x_i$
        projectors defined over finite subgroups $E$ of $G$ --- note that
        $G_0(K[E])=K_0(K[E])$ for any finite group $E$, as $K[E]$ is semisimple
        Artinian. Inducing further 
        to $\UG$ and using that the dimension function extends to finitely
        presented $\UG$-modules (which is additive, so that we can leave out
        the stabilization summands), 
        we read off that
        \begin{equation*}
\dimu(M)=\dimu(\bigoplus a_i x_i\UG)
 =\sum a_i
        \dimu(x_i\UG) \in \remts{L_K}(G)
      \end{equation*}
by definition of $\remts{L_K}(G)$. Finally, by additivity of the von Neumann dimension  $\dimu(\ker(A)) =
n-\dimu(M)\in \remts{L_K}(G)$.

\ref{item:quant} $\implies$ \ref{item:At-art}: Here, we follow closely the argument of the proof of
\cite{L+S2}*{Proposition 2.14}.
   Our assumption implies by Theorem \ref{theo:EKG_equal_DKG} that
   $\EKG=D(K[G])$. Because the center-valued Atiyah conjecture implies that
   the ordinary $L^2$-Betti numbers are contained in a finitely generated
   subgroup of $\rationals$ (generated by $\tr_G(P^j)/L_j$),  by
   \cite{L+S2}*{Theorem 2.7} $D(K[G])$ is a semisimple Artinian ring. 

The $P^j$ are central idempotents in $D(K[G])$. We have
        to show that they are primitive central idempotents, and that each is
        the sum of exactly $L_j$ orthogonal sub-idempotents which are
        themselves irreducible. The structure theory of rings then implies
        that each $P^jD(K[G])P^j$ is simple Artinian and an $L_j\times
        L_j$-matrix ring over a skew field. 

 Fix, as in Definition \ref{def:Atiyah-Artinean}, the (finite) collection of
 sub-projections $Q_\alpha$ of $P^j$, 
        where the $Q_\alpha$ are irreducible projections supported on
        $K[H_\alpha]$ and $H_\alpha$ runs through the (isomorphism classes of)
        finite extensions of $\Delta^+(G)$ inside $G$.  Then
$\tru(Q_\alpha)=\frac{n_\alpha}{L_j} P^j$ with integers $n_\alpha$, and by
definition of $L_j$ we have $\gcd_\alpha(n_\alpha)=1$. Set
$d:=\lcm_\alpha(n_\alpha)$.

Consider now $P^j\UG^d$. Because
$$\dimu(P^j\UG^d)=dP^j=\dimu(Q_\alpha\UG^{L_jd/n_\alpha})$$ by 
\cite{Lueck02}*{Theorem 9.13(1)} then $P^j\UG^d\iso
Q_\alpha\UG^{L_jd/n_\alpha}$, so we find $L_jd/n_\alpha$ mutually orthogonal
projections in $\Mat_d(\UG)$ corresponding to the copies of $Q_\alpha$. Because
the center-valued trace of each of those equals
$\frac{n_\alpha}{L_j}P^j=\tru(Q_\alpha)$, by \cite{Sterling}*{Exercise
  13.15A}, there exist $L_jd/n_\alpha$
similarities (i.e.~self-adjoint unitaries)
$u_i \in \mathcal{U}(G)$ with $u_1 = 1$ such that these projections can be
written as $u_iP'_\alpha u_i$ (where $P_\alpha'$ is the diagonal matrix with
first entry $P_\alpha$ and all other entries $0$). 

Then, exactly as in the proof of \cite{L+S2}*{Proposition 2.14} we can replace
the $u_i$ by $\tilde u_{i}\in \Mat_d(D(K[G]))$ which are invertible and such that we
still have a direct sum decomposition
\begin{equation}\label{eq:dirsum}
  P^jD(K[G])^d = \bigoplus_{i=1}^{L_jd/n_\alpha} \tilde u_{i} P_\alpha' D(K[G])^d.
\end{equation}
This uses
the Kaplansky density theorem, the quantization of the center-valued trace and
\cite{L+S2}*{Lemma 2.12}.

Let us now take a central idempotent $\epsilon$ in $D(K[G])$ which is a sub-projection of
$P^j$ (i.e.~$\epsilon P^j=\epsilon$). We have to show that $\epsilon=0$ or
$\epsilon=P^j$. To do this, we 
compute $\tru(\epsilon)$. Note that all the modules $\epsilon \tilde
u_iP_\alpha'\UG^d$ are isomorphic, therefore by Equation \eqref{eq:dirsum}
\begin{equation}\label{eq:tru_eps}
  d\tru(\epsilon)=\dimu(\epsilon\UG^d) = \frac{L_jd}{n_\alpha} \dimu(\epsilon
  P_\alpha' \UG^d).
\end{equation}
By Lemma \ref{lem:range_of_dim_on_EKG} and the assumption \ref{item:quant},
$L_j\cdot \dimu(\epsilon P_\alpha'\UG^d)$ is an integer multiple of
$P^j$. Therefore, rearranging Equation \eqref{eq:tru_eps}
\begin{equation*}
  n_\alpha\tru(\epsilon) \in \integers P^j.
\end{equation*}
As this holds for all $\alpha$, even
\begin{equation*}
 \epsilon= \tru(\epsilon) =\lcm_\alpha(n_\alpha)\tru(\epsilon)\in \integers P^j.
\end{equation*}
So we can indeed conclude that $P^j$ is a primitive central
idempotent and
therefore $P^jD(K[G])$ is an $l\times l$ matrix ring over a skew field. It
follows that $P^jD(K[G])^{n_\alpha d}$ is the direct sum of $n_\alpha d l$
copies of an irreducible submodule. On the other hand, $P^j
D(K[G])^{n_\alpha d}$ is the direct sum of $L_jd$ isomorphic summands
for every $\alpha$. As
$\lcm_\alpha(n_\alpha)=1$ we conclude that $L_j\divides l$. On the other hand,
by the assumption \ref{item:quant} and Lemma \ref{lem:range_of_dim_on_EKG},
the center-valued dimension of the irreducible submodule (which is generated
by one projector as $P^jD(K[G])$ is Artinian) is an integer multiple of
$L_j^{-1}P^j$ and therefore $L_j\divides l$. It follows that $l=L_j$ as
claimed.
\end{proof}

\section{Special cases and inheritance properties of the center-valued Atiyah conjecture}

Throughout this section, we assume that $K$ is a subfield of
$\mathbb{C}$ which is closed under complex conjugation.

    % \begin{lemma}
    %     We give an inductive description of $\CCC$: If $\XX$ and $\YY$ are classes of groups, then $H\in L\XX$ means every finite subset of H is contained in an $\XX$-subgroup, $H\in\XX\YY$ means that $H$ has a normal $\XX$ subgroup $X$ such that $H/X\in \YY$, and $\BB$ denotes finitely generated abelian by finite groups.

    %     We define inductively $\YY_0=\{\text{ all finite groups }\}$\,, for an ordinal a $\YY_{a+1}=(L\YY_a)\BB$ and
    %     and $\YY_b=\bigcup_{a\ge 0}\YY_a$ for a limit ordinal. In \cite{MR1242889}*{Lemma 4.9}. it is proven that:
    %     \begin{itemize}
    %         \item  \, $ \YY=\CCC$\,.
    %         \item If $H\lhd G$\, $|G/H|\le \infty$ and $H\in \YY_a$\,, then $G\in\YY_a$\,.
    %         \item If  $H\lhd G$\, $|G/H|\le \infty$ and $H\in L\YY_a$\,, then $G\in L\YY_a$\,.
    %     \end{itemize}
    % \end{lemma}

\begin{lemma}\label{lem:finite}
   The center-valued Atiyah conjecture is true for finitely generated
virtually free groups.
\end{lemma}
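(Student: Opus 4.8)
The plan is to exploit the free-by-finite structure of $G$ and then invoke the algebraic reformulation of Theorem~\ref{con:at}. First I would set up notation: a finitely generated virtually free group contains a free subgroup of finite index, and intersecting its (finitely many) conjugates produces a free \emph{normal} subgroup $N\lhd G$ of finite index; write $Q=G/N$, a finite group. Since $N$ is free it is torsion-free, so every finite subgroup $E\subgroup G$ is carried isomorphically by the projection $\pr\colon G\to Q$ onto a subgroup $\overline E\subgroup Q$ that it lifts; in particular $\lcm(G)<\infty$, there are only finitely many conjugacy classes of finite subgroups, and $\Delta^+(G)$ is defined, so the hypotheses of Theorem~\ref{con:at} hold. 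It then suffices to verify property~\ref{item:KKG}: that $D(K[G])$ is semisimple Artinian and that $\phi\colon\bigoplus_{|E|<\infty}K_0(K[E])\to K_0(D(K[G]))$ is surjective.

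Second I would identify $D(K[G])$ explicitly. As $N$ is a torsion-free free group, $K[N]$ satisfies the strong Atiyah conjecture, so by Theorem~\ref{theo:skew_field} the ring $\mathcal{D}:=D(K[N])$ is a skew field. The conjugation action of $G$ on $N$ extends, by functoriality of the division closure under ring automorphisms, to an action of $G$ (through $Q$, up to inner automorphisms) on $\mathcal{D}$, and $K[G]=K[N]\ast Q$ is a crossed product with respect to this data. Using that $\mathcal{U}(N)\subseteq\mathcal{U}(G)$ --- a non-zero-divisor of $\mathcal{N}(N)$ has kernel of $\mathcal{N}(N)$-dimension $0$, hence of $\mathcal{N}(G)$-dimension $0$, hence is again a non-zero-divisor --- the crossed product $\mathcal{D}\ast Q$ is a subring of $\mathcal{U}(G)$ containing $K[G]$; since $\lvert Q\rvert$ is invertible in the characteristic-zero ring $\mathcal{D}$, Maschke's theorem for crossed products shows that $\mathcal{D}\ast Q$ is semisimple Artinian, whence it is closed under taking inverses in $\mathcal{U}(G)$ (a non-unit of a semisimple Artinian ring is a zero-divisor, a fortiori in $\mathcal{U}(G)$). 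As $D(K[G])$ must also contain $D(K[N])=\mathcal{D}$, we conclude $D(K[G])=\mathcal{D}\ast Q$, in particular semisimple Artinian.

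Third, and here is the crux, I would prove the surjectivity of $\phi$. Since the cocycle of the crossed product is trivial on the subgroup $\overline E\subgroup Q$ lifted by a finite $E\subgroup G$, the composite $K[E]\into K[G]\to D(K[G])=\mathcal{D}\ast Q$ has image inside the skew group ring $\mathcal{D}\rtimes E\subseteq\mathcal{D}\ast Q$. It therefore suffices to show (a) that $K_0(\mathcal{D}\ast Q)$ is generated by the images of $K_0(\mathcal{D}\ast\overline E)$, with $\overline E$ ranging over the elementary (in Brauer's sense) subgroups of $Q$, and (b) that each $K_0(K[E])\to K_0(\mathcal{D}\rtimes E)$ is surjective. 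For (a) I would appeal to an Artin--Brauer induction theorem: $K_0$ of a crossed product of a ring by a finite group is a cohomological Mackey functor, so Dress/Swan induction yields \emph{integral} generation from elementary subgroups. For (b) I would compute directly with the skew group ring $\mathcal{D}\rtimes E$ of a skew field by a finite group, describing its simple modules in terms of the fixed skew field $\mathcal{D}^{E}$ and comparing with the simple $K[E]$-modules. With (a) and (b) in hand, $\phi$ is surjective and Theorem~\ref{con:at} delivers the center-valued Atiyah conjecture for $G$.

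The main obstacle is this last step: Artin induction over $\Q$ is routine, but upgrading to $\Z$-coefficients and pinning down the image of $K_0(K[E])$ in $K_0(\mathcal{D}\rtimes E)$ need the full Mackey-functor/induction apparatus. A parallel route, of comparable difficulty, bypasses (a)--(b): one verifies directly that $D(K[G])=\mathcal{D}\ast Q$ is Atiyah-expected Artinian (Theorem~\ref{con:at}\,\ref{item:At-art}) by computing its center together with the sizes of the matrix factors of its simple components from formula~\eqref{eq:Atiyah}. As a sanity check, for the infinite dihedral group one has $N=\Z$, $\mathcal{D}=K(t)$, and $Q=\Z/2$ acting by $t\mapsto t^{-1}$, giving $D(K[G])=K(t)\ast\Z/2\cong M_2\bigl(K(t+t^{-1})\bigr)$, consistent with $\Delta^+(G)=\{1\}$ and $\lcm(G)=2$.
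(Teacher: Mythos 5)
Your setup — passing to a free normal subgroup $N\lhd G$ of finite index with quotient $Q$, and identifying $D(K[G])$ with a crossed product $\mathcal{D}\ast Q$ over the skew field $\mathcal{D}=D(K[N])$ — matches the structural picture used in Linnell's argument (the paper's proof is essentially a citation to \cite{MR1242889}, Proposition~5.1(i) and Lemma~5.2(ii), combined with Theorem~\ref{con:at}\ref{item:KKG}). The semisimplicity part of your argument is sound. But step~3, the surjectivity of $\phi$ via Brauer/Dress/Swan induction inside $\mathcal{D}\ast Q$, has a genuine gap that you flag vaguely but do not pin down: the (hyper)elementary subgroups $\overline E\le Q$ that the induction theorem requires need not lift to finite subgroups of $G$, and without a lift you have no map $K_0(K[E])\to K_0(\mathcal{D}\ast\overline E)$ to feed into your part~(b). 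A concrete failure: $G=\Z/2\ast\Z/3$ is virtually free, with free normal subgroup $N=\ker(G\to\Z/2\times\Z/3)$ and $Q\cong\Z/6$. Every finite subgroup of $G$ is conjugate into $\Z/2$ or $\Z/3$ (Kurosh), so $\Z/6\le Q$ does not lift; yet $\Z/6$ is cyclic (hence elementary and hyperelementary), and in $R(\Z/6)$ the inductions from the proper subgroups $\Z/2$, $\Z/3$, $\{1\}$ span a sublattice of rank $\le 4$ in $\Z^6$, so one cannot avoid inducting from $\Z/6$ itself. Thus the Mackey-functor route, as stated, does not reduce to the finite subgroups of $G$.

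The approach that actually works — and that underlies the cited argument of Linnell — circumvents this by using the Karrass--Pietrowski--Solitar/Stallings structure theorem: a finitely generated virtually free group is the fundamental group of a finite graph of \emph{finite} groups, and Mayer--Vietoris (Bass--Serre) arguments for $K_0$ of amalgams and HNN extensions over regular coefficient rings show that $K_0$ is generated by the images from the vertex groups, which \emph{are} the finite subgroups of $G$. The crossed product structure is still used to establish that $D(K[G])$ is semisimple Artinian; but the induction to finite subgroups goes through the tree decomposition of $G$, not through the subgroup lattice of $Q$. Your ``parallel route'' (directly matching the center and matrix sizes of $\mathcal{D}\ast Q$ to Definition~\ref{def:Atiyah-Artinean}) could in principle also work, but as written it is only a plan, and it too must somehow access the actual finite subgroups of $G$ rather than those of $Q$.
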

\begin{proof}
This follows from the proof of \cite{MR1242889}*{Proposition 5.1(i)
and Lemma 5.2(ii)} (in
which $\mathbb{C}$ can be replaced by any subfield of $\mathbb{C}$)
and Theorem \ref{con:at}\ref{item:KKG}.
\end{proof}

\begin{lemma}\label{prop:union}
  If $G$ is a directed union of groups $G_i$ and the center-valued Atiyah
  conjecture over $K$ is true for all groups $G_i$, then it is also true for
  $G$. 
\end{lemma}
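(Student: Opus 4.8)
The plan is to reduce to the algebraic reformulation in Theorem~\ref{con:at}, specifically to criterion \ref{item:GKG} (surjectivity of $\phi\colon\bigoplus_{E\le G,\,|E|<\infty}G_0(K[E])\to G_0(D(K[G]))$), and to exploit the fact that $D(K[G])$ is built from the group ring by iterating the operations ``division closure'' and ``adjoining central idempotents,'' each of which interacts well with directed unions. First I would note that since $G=\bigcup_i G_i$ is a directed union of its subgroups $G_i$ and every finite subgroup of $G$ is already contained in some $G_i$ (a finite subset of a directed union lands in one of the terms), we have $\lcm(G)=\sup_i\lcm(G_i)$; the conjecture asserts nothing unless $\lcm(G)<\infty$, so we may assume this, and then in fact $\lcm(G_i)\le\lcm(G)<\infty$ for all $i$, and $\lcm(G_i)=\lcm(G)$ for $i$ large enough. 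Likewise $\Delta^+(G)\subset G_i$ for $i$ large, so the relevant finite subgroups and the central projections $P^1,\dots,P^{C_{K[G]}}$ of Lemma~\ref{lemma:xyz1} are ``the same'' for $G$ and for all sufficiently large $G_i$.

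The key structural point is that $\mathcal{N}(G)$, $\mathcal{U}(G)$, $\Q[G]$ and hence $K[G]$ are compatible with directed unions of groups: $\mathcal{U}(G)=\bigcup_i\mathcal{U}(G_i)$ in a suitable sense (using the compatible inclusions $\ell^2(G_i)\hookrightarrow\ell^2(G)$ and the fact that affiliated operators of $\mathcal{N}(G_i)$ extend to affiliated operators of $\mathcal{N}(G)$), and both the division-closure operation and the operation $\mathcal{C}(-,-)$ of Definition~\ref{def:EKG} commute with such unions. Concretely, any element of $D(K[G])$ is obtained from finitely many elements of $K[G]$ by finitely many ring operations and finitely many inversions in $\mathcal{U}(G)$; all the elements involved, together with the relevant inverses, already live over some $G_i$, so $D(K[G])=\bigcup_i D(K[G_i])$, where the union is taken along the natural maps induced by $G_i\le G$. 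The same argument, run through the transfinite construction of $\EKG$, gives $\EKG=\bigcup_i\mathcal{E}(K[G_i])$; this is the technical heart and will require some care about whether the closure operations stabilize at a uniform countable ordinal, but for a single element only countably many steps are ever used, so the union identification goes through.

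Granted $D(K[G])=\varinjlim_i D(K[G_i])$, the conclusion follows by a standard continuity argument for $G_0$ (equivalently $K_0$, since these rings are semisimple Artinian once each $G_i$ satisfies the conjecture): $G_0$ commutes with filtered colimits of rings, so $G_0(D(K[G]))=\varinjlim_i G_0(D(K[G_i]))$. A class in $G_0(D(K[G]))$ is represented by a finitely generated module, hence by one induced from some $D(K[G_i])$; by hypothesis $G_i$ satisfies the conjecture, so by Theorem~\ref{con:at}\ref{item:GKG} that class is in the image of $\bigoplus_{E\le G_i,\,|E|<\infty}G_0(K[E])$, and composing with the inclusion $G_i\le G$ puts it in the image of $\bigoplus_{E\le G,\,|E|<\infty}G_0(K[E])$. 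Thus $\phi$ is surjective for $G$, and Theorem~\ref{con:at} gives the center-valued Atiyah conjecture for $G$.

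I expect the main obstacle to be the identification $D(K[G])=\bigcup_i D(K[G_i])$ (and its $\EKG$-analogue) at the level of the ambient ring of affiliated operators: one must be sure that inverting an element of $K[G_i]$ inside $\mathcal{U}(G)$ gives the same answer as inverting it inside $\mathcal{U}(G_i)$, i.e.\ that the inclusion $\mathcal{U}(G_i)\hookrightarrow\mathcal{U}(G)$ is compatible with the (partially defined) inversion, equivalently that $\mathcal{N}(G)$ is flat — or at least dimension-compatible — over $\mathcal{N}(G_i)$ for directed unions. This is known (it is implicit in L\"uck's dimension theory and in Linnell's original treatment of $\mathcal{C}$), so the proof can cite it; the remaining steps are formal.
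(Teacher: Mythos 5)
Your proposal takes a genuinely different route from the paper, and it contains a gap that is worth naming. The paper's proof is a three-line direct verification of the trace condition (item (4) of Theorem~\ref{con:at}): it cites Linnell's \cite{MR1242889}*{Lemma 5.3} for $D(K[G])=\bigcup_i D(K[G_i])$, notes that any matrix $A$ therefore lives over some $D(K[G_i])$ with $\dim^u_{G_i}(\ker A)\in L_K(G_i)$ by hypothesis, and then applies the induction formula for the center-valued trace, $\dimu(\ker A)=\tru\bigl(\dim^u_{G_i}(\ker A)\bigr)$, together with $\tru\bigl(L_K(G_i)\bigr)\subset L_K(G)$. No $K$-theory and no regularity of $D(K[G])$ are needed, only that $\tru$ carries $\tr^u_{G_i}$ of projections in $F[H]$, $H\le G_i$ finite, to $\tru$ of the same projections viewed over $G$.

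The gap in your route is the claim that ``$G_0$ commutes with filtered colimits of rings,'' used to get $G_0(D(K[G]))=\varinjlim_i G_0(D(K[G_i]))$. This is not true for arbitrary filtered colimits: a finitely generated $R$-module over a non-Noetherian ring $R=\varinjlim R_i$ need not be extended from any $R_i$, since a finite generating set forces only the generators (not the relations) to live over a finite stage. What you actually know at this point is that each $D(K[G_i])$ is semisimple Artinian, so $D(K[G])$ is von Neumann regular; but over a von Neumann regular ring finitely generated does not imply finitely presented, and $G_0$ can behave badly. The fact that $D(K[G])$ itself is Artinian is essentially what one is trying to establish, so invoking it here would be circular. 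The argument can be repaired: in the implication \ref{item:GKG}$\implies$\ref{item:quant} of Theorem~\ref{con:at}, the only classes that must lie in the image of $\phi$ are those of modules $M\otimes_{K[G]}D(K[G])$ with $M$ finitely presented over $K[G]$, and such modules do come from a finite stage (the presentation matrix lies over some $K[G_i]$). But as written, your surjectivity-of-$\phi$ claim is stronger than what the colimit argument delivers. Two minor points: the assertion ``$\mathcal{U}(G)=\bigcup_i\mathcal{U}(G_i)$'' is false as stated (affiliated operators do not come from finite stages); what is true, and what you actually use, is the directed-union statement for $D(K[G])$, which relies on the fact that a non-zero-divisor of $\mathcal{N}(G_i)$ remains a non-zero-divisor in $\mathcal{N}(G)$. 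And the detour through $\EKG=\bigcup_i\mathcal{E}(K[G_i])$ is unnecessary for this lemma.
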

\begin{proof}
  By \cite{MR1242889}*{Lemma 5.3}, $D(K[G])$ is the directed union of the
  $D(K[G_i])$. Any matrix $A$ over $D(K[G])$ is therefore already a matrix over
  $D(K[G_i])$ for some $i$, with $\dim^u_{G_i}(\ker(A))\in \remts{L_K}(G_i)$. Composition
  with the center-valued trace for $G$ gives (by the induction formula for von
  Neumann dimensions) $\dimu(\ker(A)) \in \tru(\remts{L_K}(G_i))\subset \remts{L_K}(G)$.
\end{proof}

% As $K$-theory commutes with such colimits, \ref{item:KKG} of
%   Theorem \ref{con:at} for
%   $D(K[G])$ follows from \ref{item:KKG} for $D(K[G_i])$.
%\comment{Changed, as directed union of Artinean rings is not necessarily Artinean}

\begin{proposition}\label{prop:extension}
  Assume that we have an extension $1\to H\to G\xrightarrow{\pi} E\to 1$ where
  $E$ is 
  elementary amenable and for each finite subgroup $F\subgroup E$,
  $\pi^{-1}(F)\subgroup G$ satisfies the center-valued Atiyah conjecture over
  $K$ . Then
  also $K[G]$ satisfies the center-valued Atiyah conjecture.
\end{proposition}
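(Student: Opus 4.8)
The plan is to reduce the general elementary amenable quotient $E$ to the transfinite hierarchy that defines the class of elementary amenable groups, and to run an induction along that hierarchy, using at each stage the two building blocks for which we already have the machinery: extensions by finite groups (which are absorbed into the hypothesis by enlarging $H$) and extensions by infinite cyclic (or, more generally, torsion-free abelian) groups, together with directed unions, which are handled by Lemma~\ref{prop:union}. So first I would recall that $E$ is built from finite and abelian groups by the operations of taking subgroups, quotients, extensions and directed unions; since $\lcm(G)<\infty$ forces $\lcm(\pi^{-1}(F))<\infty$ for every finite $F\le E$, and since the hypothesis is exactly about the groups $\pi^{-1}(F)$, the statement is really a statement about the family $\{\pi^{-1}(F) : F\le E \text{ finite}\}$ and how the conjecture propagates from it to $G$.

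The key reduction is the following: because the center-valued Atiyah conjecture is detected (by Theorem~\ref{con:at}) by an algebraic condition on $D(K[G])$ and is stable under directed unions, it suffices to treat the case where $E$ is finitely generated, and then to induct on the Hirsch-length-like complexity of a finitely generated elementary amenable group. Concretely, a finitely generated infinite elementary amenable group $E$ has a normal subgroup $E_0$ with $E/E_0$ infinite cyclic or finite, and $E_0$ again elementary amenable of smaller complexity; pulling this back gives $1\to H'\to G\to E/E_0\to 1$ with $H'=\pi^{-1}(E_0)$, and by the induction hypothesis every $\pi^{-1}(F)$ for $F\le E$ finite satisfying the conjecture feeds into the conclusion that $H'=\pi^{-1}(E_0)$ satisfies it (applying the proposition with the smaller quotient $E_0$). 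Thus we are reduced to the two atomic cases: $E$ finite --- in which case $G=\pi^{-1}(E)$ and there is nothing to prove --- and $E$ infinite cyclic. For $E=\integers$ one writes $G$ as the directed union of the subgroups $\pi^{-1}(n!\integers)\rtimes$ (a transversal), reducing to a single extension by $\integers$, and here one uses the crossed-product structure $D(K[G])$ over $D(K[H])$ together with the fact --- established in the style of \cite{MR1242889} and recorded via Theorem~\ref{con:at}\ref{item:KKG}/\ref{item:GKG} --- that $K_0$ (equivalently $G_0$) of $D(K[G])$ is generated by the images coming from $D(K[H])$ and hence, inductively, from the finite subgroups.

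The main technical obstacle is the $\integers$-extension step: one must control $D(K[G])$ in terms of $D(K[H])$ when $G/H\cong\integers$. The right tool is that $D(K[G])$ is the division closure of the twisted Laurent extension $D(K[H])\ast\integers$ inside $\UG$, and since $D(K[H])$ is (by the inductive hypothesis and Theorem~\ref{con:at}) semisimple Artinian of the Atiyah-expected shape, the twisted polynomial ring over it is a semiprime right and left Noetherian ring whose division closure is again semisimple Artinian; crucially the center-valued dimensions of the resulting summands are forced to lie in $L_K(G)$ because the relevant part of $\Zen(\NG)$ is already $\complexs[\Delta^+]$, which is unchanged under the extension by the torsion-free group $\integers$ (so $\Delta^+(G)=\Delta^+(H)$ and $P^j_G=P^j_H$). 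Then one checks that $G_0(D(K[G]))$ is generated by the images of $G_0(D(K[H]))$ --- this is a localization/d\'evissage argument for the twisted Laurent extension, exactly parallel to the classical case in \cite{MR1242889} --- and invokes Theorem~\ref{con:at}\ref{item:GKG}$\Rightarrow$\ref{item:quant}. I expect the bookkeeping of transfinite induction over the elementary amenable hierarchy to be routine once the $\integers$-step and the finite-step are in place, so the heart of the proof, and the place where one must be careful that nothing new leaks into $\Zen(\NG)$, is precisely this twisted Laurent extension argument.
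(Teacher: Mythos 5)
Your overall architecture (directed unions plus transfinite induction down to a small quotient, then atomic extension steps checked via the $K$-theoretic criteria of Theorem \ref{con:at}) has the right shape, and your infinite cyclic step is plausible in outline: for $G/H\iso\integers$ one does control $D(K[G])$ via the twisted Laurent extension of the semisimple Artinian ring $D(K[H])$, the fundamental theorem of algebraic $K$-theory gives that $G_0$ is generated by the image of $G_0(D(K[H]))$, and $\Delta^+(G)=\Delta^+(H)$ so nothing new enters $\Zen(\NG)$. The genuine gap is your claim that the finite quotient is an atomic case ``in which there is nothing to prove.'' That is true only for the \emph{original} extension, where taking $F=E$ finite gives $\pi^{-1}(E)=G$ straight from the hypothesis. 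In your inductive decomposition $1\to H'\to G\xrightarrow{\rho} E/E_0\to 1$ with $H'=\pi^{-1}(E_0)$ and $E/E_0$ finite, what you would need is the conjecture for $\rho^{-1}(F')$ for the finite subgroups $F'\subgroup E/E_0$; these are \emph{infinite} finite extensions of $H'$, not of the form $\pi^{-1}(F)$ with $F\subgroup E$ finite, so the hypothesis says nothing about them, and knowing the conjecture for $H'$ alone does not yield it for $G$ --- passing the (center-valued) Atiyah conjecture up a finite extension is exactly the hard problem of \cite{L+S1} and is in no sense automatic. Equivalently, at the level of your criterion \ref{item:GKG}, $G_0(S\ast Q)$ for $Q$ finite is \emph{not} generated by the image of $G_0(S)$ (already $G_0(\complexs[Q])$ versus $G_0(\complexs)$ shows this); the missing classes come precisely from finite subgroups of $E$ not contained in $E_0$. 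Since a finitely generated virtually abelian quotient forces its finite part to sit at the top (e.g.\ the infinite dihedral group), this circularity cannot be avoided by reordering your d\'evissage.

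The paper therefore does not decompose the virtually abelian quotient any further: it reduces, by transfinite induction and Lemma \ref{prop:union}, only as far as $E$ finitely generated virtually abelian, and then applies Moody's induction theorem \cite{Moody} to the crossed product decomposition of $D(K[G])$ over $D(K[H])$ in one stroke, obtaining surjectivity of
\begin{equation*}
  \bigoplus_{F\subgroup E,\ |F|<\infty} G_0\bigl(D(K[\pi^{-1}(F)])\bigr)\to G_0\bigl(D(K[G])\bigr),
\end{equation*}
which, composed with the surjections $\bigoplus_{U\subgroup \pi^{-1}(F),\ |U|<\infty}G_0(K[U])\to G_0(D(K[\pi^{-1}(F)]))$ supplied by the hypothesis, establishes criterion \ref{item:GKG} of Theorem \ref{con:at} for $G$. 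To repair your argument you would have to invoke Moody's theorem (or an equivalent induction statement for crossed products by finite groups) precisely at the step you currently dismiss as trivial; at that point your $\integers$-by-$\integers$-by-$\cdots$ bookkeeping becomes redundant, since Moody handles the whole virtually abelian quotient at once.
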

\begin{proof}
  By transfinite induction, the statement is a formal consequence of the same
  assertion where $E$ is finitely generated virtually abelian, as explained
  e.g.~in the proof of \cite{Schick}*{Proposition 3.1} or in \cite{MR1242889}.

  If $E$ is finitely generated virtually abelian then in the proof of
  \cite{MR1242889}*{Lemma 5.3} it is shown that 
            $$\bigoplus_{F\subgroup E \text{ finite}} G_0(D(K[\pi^{-1}(F)]))\to
            G_0(D(K[G]))$$ 
            is onto, using  Moody's induction
            theorem \cite{Moody}*{Theorem 1}. Since by assumption
            $\bigoplus_{U\subgroup 
              \pi^{-1}(F)\text{ finite}}G_0(K[U])\to G_0(D(K[\pi^{-1}(F)]))$ is
            onto for each such $F$ and the composition of surjective maps is
            surjective we conclude that
            $$\bigoplus_{F\in \mathcal{F}(G)} G_0(K[F])\to G_0(D(K[G]))$$
            is onto and \ref{item:GKG} of Theorem \ref{con:at} is established.  
\end{proof}

\begin{proposition}\label{prop:inverse_limit}
Let $K$ be a subfield of $\overline{\mathbb{Q}}$ which is closed
under complex conjugation.
  Assume that $G$ is a group with a sequence $G \supergroup
G_1\supergroup \cdots$ of
  normal subgroups such that $\bigcap_{i\in\naturals} G_i=\{1\}$. Assume
  moreover that for each $i\in \naturals$ and each finite subgroup $F\subgroup
  G/G_i$ there is a finite subgroup $F'\subgroup G$ which is mapped
  isomorphically to $F$ by the projection $G\to G/G_i$.

  Finally, assume that each $G/G_i$ satisfies the determinant bound conjecture
  and the center-valued Atiyah
  conjecture over $K$. Then $K[G]$ satisfies the center-valued Atiyah
  conjecture. 
\end{proposition}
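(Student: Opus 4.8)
The plan is to prove Proposition~\ref{prop:inverse_limit} by combining the approximation theorem for the center-valued $L^2$-Betti numbers (Theorem~\ref{theo:approxi}) with the discreteness of $L_K(G)$ (Corollary~\ref{corol:discrete}), exactly in the spirit of the approximation arguments of \cite{Schick} and \cite{DLMSY}, now carried out at the level of the center-valued trace. Fix $A\in\Mat_d(K[G])\subset\Mat_d(\overline{\mathbb{Q}}[G])$; we must show $\dimu(\ker A)\in L_K(G)$. For each $i$ let $A[i]\in\Mat_d(K[G/G_i])$ be the image of $A$ under $\pr_i\colon G\to G/G_i$. Since each $G/G_i$ satisfies the determinant bound conjecture, Theorem~\ref{theo:approxi} applies and gives, for every $g\in\Delta(G)$,
\[
\lim_{i\to\infty}\bigl\langle \dim^u_{\mathcal{N}(G/G_i)}(\ker(A[i])),\pr_i(g)\bigr\rangle_{l^2(G/G_i)}=\bigl\langle\dimu\ker(A),g\bigr\rangle_{l^2(G)}.
\]

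The first real step is to observe that $\dimu\ker(A)$ is supported on $\Delta(G)$ — indeed, since $A\in\Mat_d(K[G])$ and $\dimu$ takes values in $\Zen(\NG)$, and the center of $\NG$ is spanned (as a Hilbert space) by $l^2$-functions constant on $G$-conjugacy classes, only the finite conjugacy classes contribute, so $\dimu\ker(A)\in\Zen(\NG)$ is determined by its pairings with the finitely many elements $g$ in the finite conjugacy classes of finite order, i.e.\ by its component in $\Zen(\NG)\cap\Zen(\mathbb{C}[\Delta^+])$. (Here I use that the Atiyah conjecture hypothesis on the $G/G_i$ will eventually force the limit to land in $L_K(G)$, which is supported on $\Delta^+$; one should phrase this carefully, pairing only against the relevant finitely many conjugacy-class sums, using the orthogonal basis $(U^1,\dots,U^{c_G})$ from Lemma~\ref{lemma:xyz}.) Thus the left-hand side above, ranging over a finite set of elements $g$, determines $\dimu\ker(A)$ completely as a limit of the center-valued dimensions $\dim^u_{\mathcal{N}(G/G_i)}(\ker(A[i]))$ in the finite-dimensional space $\Zen(\mathcal{N}(G/G_i))\cap\Zen(\mathbb{C}[\Delta^+(G/G_i)])$, transported along the maps induced by $\pr_i$.

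The second step uses the hypothesis that each $G/G_i$ satisfies the center-valued Atiyah conjecture over $K$: so $\dim^u_{\mathcal{N}(G/G_i)}(\ker(A[i]))\in L_K(G/G_i)$. The key compatibility is that the hypothesis ``each finite subgroup $F\subgroup G/G_i$ lifts isomorphically to a finite subgroup $F'\subgroup G$'' guarantees that under the natural identification of $\Zen(\mathbb{C}[\Delta^+(G/G_i)])$-components, the image of the generating set of $L_K(G/G_i)$ (the $\tru(P)$ for $P\in F[F]$ a projection, $F\subgroup G/G_i$ finite) is contained in the corresponding set of generators $\tru(P')$ for $L_K(G)$; this is because a projection in $F[F]$ pulls back to a projection in $F[F']$, and the center-valued trace of a projection supported on a finite group is computed by the explicit character formula of Lemma~\ref{lemma:Atiyah}, which depends only on the isomorphism type of the finite group and the representation, not on the ambient group. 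Hence each $\dim^u_{\mathcal{N}(G/G_i)}(\ker(A[i]))$, viewed inside $\Zen(\NG)$ via this identification, actually lies in $L_K(G)$.

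The third and final step is to invoke discreteness: by Corollary~\ref{corol:discrete}, $L_K(G)$ is a discrete subgroup of $\Zen(\NG)$. A convergent sequence in a discrete set is eventually constant, so $\dim^u_{\mathcal{N}(G/G_i)}(\ker(A[i]))$ stabilizes to its limit $\dimu\ker(A)$, which therefore lies in $L_K(G)$. This gives the center-valued Atiyah conjecture for $K[G]$. The main obstacle — and the point requiring the most care — is the bookkeeping in the second step: making the identification of the relevant pieces of $\Zen(\mathcal{N}(G/G_i))$ with a fixed subspace of $\Zen(\NG)$ precise, checking that $\Delta^+(G/G_i)$ and its representation theory are ``seen'' inside $G$ via the lifting hypothesis in a way compatible with the formulas of Lemma~\ref{lemma:Atiyah}, and verifying that the approximation statement of Theorem~\ref{theo:approxi}, which is phrased pairing-by-pairing against individual group elements $g\in\Delta(G)$, assembles into convergence of the full center-valued dimensions in the finite-dimensional target space so that discreteness can be applied. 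Once that identification is set up cleanly, the remaining arguments are routine.
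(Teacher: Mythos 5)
Your overall strategy is the same as the paper's (approximation theorem plus discreteness of $L_K(G)$ plus an identification of $L_K(G/G_i)$ with $L_K(G)$), but two steps that you wave at are genuine gaps, and they are exactly where the hypotheses of the proposition get used.

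First, your ``natural identification of $\Zen(\mathbb{C}[\Delta^+(G/G_i)])$-components'' presupposes that $\pi_i$ identifies $\Delta^+(G)$ with $\Delta^+(G/G_i)$, compatibly with the $G$-conjugation action. This is not automatic: a priori $\Delta^+(G/G_i)$ could be strictly larger than $\pi_i(\Delta^+(G))$ (elements can acquire finitely many conjugates, or finite order, only in the quotient), and $\pi_i$ need not be injective on $\Delta^+(G)$ for small $i$. The paper proves that $\pi_i\colon\Delta^+(G)\to\Delta^+(G/G_i)$ is an isomorphism for all large $i$ by a dedicated argument: choose a finite subgroup $M\subgroup G$ of maximal order (so $\Delta^+(G)\subgroup M$), write $\Delta^+(G)=\bigcap_{k=1}^n M^{g_k}$, and pick $r$ with $\pi_r$ injective on $\bigcup_k M^{g_k}$; the lifting hypothesis is what guarantees that $\pi_r(M)$ is still of maximal order in $G/G_r$, forcing $\Delta^+(G/G_r)\subgroup\bigcap_k\pi_r(M)^{\pi_r(g_k)}=\pi_r(\Delta^+(G))$. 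Without this, neither your transport of $\dim^u_{\mathcal{N}(G/G_i)}(\ker A[i])$ into a fixed finite-dimensional subspace of $\Zen(\NG)$ nor your claim that generators of $L_K(G/G_i)$ match generators of $L_K(G)$ is justified (note also that the formula of Lemma~\ref{lemma:Atiyah} does \emph{not} depend only on the isomorphism type of the finite subgroup: the projections $P^i$ depend on the conjugation action of the ambient group on $\Delta^+$, so the compatibility really does require the $\Delta^+$-identification and the surjectivity of $\pi_i$).

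Second, you acknowledge but do not close the gap that $\dimu(\ker A)$ could a priori be supported on infinite-order elements of $\Delta(G)$ (which may be an infinite set, so ``the finitely many elements $g$ in the finite conjugacy classes'' is not available at that stage). The paper's argument is: by the lifting hypothesis the orders of finite subgroups of $G/G_i$ divide $\lcm(G)$, so if $g\in G$ has infinite order and $\pi_i$ is injective on $\{1,g,\dots,g^{\lcm(G)}\}$, then $\pi_i(g)$ has infinite order; since the center-valued Atiyah conjecture for $G/G_i$ puts $\tru(Q_i)$ inside $K[\Delta^+(G/G_i)]$, the coefficient $\innerprod{\tru(Q_i),\pi_i(g)}$ vanishes for large $i$, and Theorem~\ref{theo:approxi} then kills the coefficient of $g$ in $\dimu(\ker A)$. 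Only after both of these points are settled does the coefficientwise convergence assemble into convergence in the finite-dimensional space $\Zen(\NG)\cap K[\Delta^+(G)]$, and your final discreteness step goes through as in the paper.
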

\begin{proof}
  As the statement is empty if $\lcm(G)=\infty$, we assume that
  $\lcm(G)<\infty$. 
  We first show that, if $i$ is large enough,
  $\pi_i$ induces an isomorphism $\pi_i\colon \Delta^+(G)\to
  \Delta^+(G/G_i)$. Dropping finitely many terms in the sequence we can then
  assume that this is the case for all $i\in\naturals$. To prove the
  assertion, choose a finite 
  subgroup $M$ of $G$ with maximal order (possible since
  $\lcm(G)<\infty$). Note that the product $\Delta^+M$ is also a finite
  subgroup, 
  therefore by maximality equal to $M$, consequently $\Delta^+\subgroup M$. Then
  choose finitely many
  $g_1,\dots,g_n\in G$ such that $\Delta^+(G)=\bigcap_{k=1}^n
M^{g_k}$ (where $M^g$ denotes the conjugate $gMg^{-1}$),
which is possible by the descending chain condition for finite sets.

  Finally, choose $r>0$ such that $\pi_r\colon G\to G/G_r$ is injective when
  restricted to $\bigcup_{k=1}^n M^{g_k}$, which is possible because
  $\bigcap_{i} G_i=\{1\}$.

  Because $\pi_r$ is surjective, $\pi_r(\Delta^+(G))$ is a finite normal
  subgroup 
  of $G/G_i$ and therefore $\pi_r(\Delta^+(G))\subgroup \Delta^+(G/G_r)$. On the
  other hand, $\pi_r(M)$ is a finite subgroup with maximal order in $G/G_r$
  (because $\pi_r|_M$ is injective and every finite subgroup of $G/G_r$ is an
  isomorphic image of a finite subgroup of $G$), therefore
  $\Delta^+(G/G_r)\subgroup \pi_r(M)$, by normality even
  $\Delta^+(G/G_r)\subgroup \bigcap_{k=1}^n \pi_r(M)^{\pi_r(g)}$. As
  $\bigcap_{k=1}^n M^g =\Delta^+(G)$ and by injectivity of $\pi_r$ on
  $\bigcup_{k=1}^n M^g$ we finally get
  \begin{equation*}
    \Delta^+(G/G_r)\subgroup \bigcap_{k=1}^n \pi_r(M)^{\pi_r(g)} =
    \pi_r(\Delta^+(G)) \subgroup \Delta^+(G/G_r).
  \end{equation*}
This implies the statement for all $i\ge r$.

Secondly, given $g\in G$ of infinite order, for all sufficiently large $i$,
the restriction of $\pi_i$ to $\{1,g,g^2,\dots, g^{\lcm(G)}\}$ is injective
and therefore, as by assumption the orders of finite subgroups of $G/G_i$ are
bounded by $\lcm(G)$, $\pi_i(g)$ also has infinite order.

Fix now $A\in \Mat_d(K[G])$ and denote by $Q_i$ the projection onto the kernel
 of $A[i]:=p_i(A)$. Recall that
\begin{equation*}
\tru(Q_i)=\dimu(\ker(A))=\sum_{g\in
  G}\innerprod{\dimu(\ker(A)),g}_{l^2(G)}g,
\end{equation*}
and we denote by
$\innerprod{\dimu(\ker(A)),g}$ the \emph{coefficient of $g$} in
$\dimu(\ker(A))$, and correspondingly for $\tru(Q_i)$. 

The center-valued Atiyah conjecture for $K[G/G_i]$ implies in particular that
$\tru(Q_i)$ is contained in $K[\Delta^+(G/G_i)]$, therefore supported only on
elements of finite order. Consequently, if $g\in G$ has infinite order, then
$\innerprod{\tru(Q_i),\pr_i(g)}=0$ for sufficiently large $i$ and, by Theorem
\ref{theo:approxi}, $\innerprod{\dimu(\ker(A)),g}=0$. This implies that $\dimu(\ker(A))$ is
supported on elements of finite order, i.e.~is contained in $\Zen(\NG)\cap
K[\Delta^+(G)]$. 

As explained above, we can use $\pi_i$ to identify $\Delta^+(G)$ and
$\Delta^+(G/G_i)$ and consider $\tru(Q_i)$ as an element of $K[\Delta^+(G)]$. By
Theorem \ref{theo:approxi}, for each $g\in \Delta^+(G)$,
\begin{equation*}
  \innerprod{\dimu(\ker(A)),g} =\lim_{i\to\infty} \innerprod{\tru(Q_i),g}.
\end{equation*}
Since all the (finitely many) coefficients converge, we even have
\begin{equation*}
\lim_{i\to\infty} \tru(Q_i) = \dimu(\ker(A))\in \Zen(\NG)\cap
K[\Delta^+(G)].
\end{equation*}
Because the sets of isomorphism classes of finite
subgroups 
of $G/G_i$ and of $G$ are identified by $\pi_i$, we get exactly the
same relevant irreducible projections defined over finite subgroups and the
same central idempotents in the formulas of Lemma \ref{lemma:xyz} and Lemma
\ref{lemma:Atiyah} for $\remts{L_K}(G)$ and $\remts{L_K}(G/G_i)$. Consequently, $\pi_i$ identifies
$\remts{L_K}(G)$ and 
$\remts{L_K}(G/G_i)$. Finally, observe that by assumption about the Atiyah
conjecture for $G/G_i$ we have  $\tru(Q_i)\in \remts{L_K}(G)$. As the
latter is a discrete subset of $\Zen(\NG)$, we finally observe that
$\dimu(\ker(A))\in \remts{L_K}(G)$, i.e.~$K[G]$ satisfies the center-valued Atiyah
  conjecture. 
\end{proof}

\begin{theorem}\label{theo:LiC}
        The center-valued Atiyah conjecture is true for all groups $G\in\CCC$.
      \end{theorem}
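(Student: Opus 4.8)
The plan is to run a transfinite induction along the inductive construction of $\CCC$ in Definition \ref{def:CCC}, using Lemma \ref{lem:finite} together with Lemma \ref{prop:union} for the base case, Lemma \ref{prop:union} for the directed-union step, and Proposition \ref{prop:extension} for the elementary-amenable extension step. Concretely, I would write $\CCC=\bigcup_\alpha\mathcal{D}_\alpha$, where $\mathcal{D}_0$ is the class of (possibly infinitely generated) virtually free groups, $\mathcal{D}_{\alpha+1}$ is $\mathcal{D}_\alpha$ together with all directed unions of families in $\mathcal{D}_\alpha$ and all groups $G$ possessing a normal subgroup $N\in\mathcal{D}_\alpha$ with $G/N$ elementary amenable, and $\mathcal{D}_\lambda=\bigcup_{\alpha<\lambda}\mathcal{D}_\alpha$ for limit $\lambda$; a routine check shows $\bigcup_\alpha\mathcal{D}_\alpha=\CCC$, and an equally routine induction (using that subgroups of virtually free groups are virtually free and that both operations preserve being closed under subgroups) shows each $\mathcal{D}_\alpha$ is closed under passing to subgroups.

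Since Proposition \ref{prop:extension} demands the center-valued Atiyah conjecture not merely for the kernel $H$ of $G\to E$ but for every preimage $\pi^{-1}(F)$ of a finite subgroup $F\le E$ --- that is, for all groups that are $H$-by-finite --- I would prove by transfinite induction the stronger assertion $P(\alpha)$: for every $G\in\mathcal{D}_\alpha$ and every exact sequence $1\to G\to\hat G\to F\to 1$ with $F$ finite, $\hat G$ satisfies the center-valued Atiyah conjecture over $K$. Since the conjecture is vacuous when $\lcm(\hat G)=\infty$, I may assume $\lcm(\hat G)<\infty$ throughout, and taking $F$ trivial will then give the theorem.

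For $P(0)$: a finite extension of a virtually free group is again virtually free, hence a directed union of finitely generated virtually free subgroups, so Lemma \ref{lem:finite} and Lemma \ref{prop:union} apply. $P(\lambda)$ for limit $\lambda$ is immediate. For $P(\alpha)\Rightarrow P(\alpha+1)$, take $G\in\mathcal{D}_{\alpha+1}$ and $\hat G$ as above. If $G\in\mathcal{D}_\alpha$, apply $P(\alpha)$ directly. If $G=\bigcup_i G_i$ with $G_i\in\mathcal{D}_\alpha$, fix a finite transversal $T$ of $G$ in $\hat G$; then $\hat G$ is the directed union of the finitely generated subgroups $\langle S\cup T\rangle$ for finite $S\subseteq G$, each of which meets $G$ in a finite-index (hence finitely generated) subgroup lying in some $G_i$, therefore in $\mathcal{D}_\alpha$ by subgroup-closure; so each $\langle S\cup T\rangle$ is $\mathcal{D}_\alpha$-by-finite, satisfies the conjecture by $P(\alpha)$, and $\hat G$ does too by Lemma \ref{prop:union}. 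Finally, if $G$ sits in $1\to N\to G\xrightarrow{\pi}E\to 1$ with $N\in\mathcal{D}_\alpha$ and $E$ elementary amenable, let $N_0=\bigcap_{\hat g\in\hat G}\hat gN\hat g^{-1}$ be the normal core of $N$ in $\hat G$; as $N\lhd G\lhd\hat G$ this is an intersection of finitely many conjugates of $N$, all contained in $G$ and normal in $G$, so $N_0\lhd\hat G$, $N_0\le N$ hence $N_0\in\mathcal{D}_\alpha$, the group $G/N_0$ embeds in a finite product of copies of $E$ and is thus elementary amenable, and $\hat G/N_0$ --- an extension of $G/N_0$ by the finite group $\hat G/G$ --- is elementary amenable as well. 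So $\hat G$ is an extension $1\to N_0\to\hat G\to\hat G/N_0\to 1$ with $N_0\in\mathcal{D}_\alpha$ and elementary amenable quotient, and for each finite $F'\le\hat G/N_0$ the preimage is $N_0$-by-finite, which satisfies the conjecture by $P(\alpha)$; Proposition \ref{prop:extension} then yields the conjecture for $\hat G$, completing $P(\alpha+1)$.

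The main obstacle is exactly the mismatch between Proposition \ref{prop:extension}'s hypothesis on \emph{all} preimages of finite subgroups and the fact that a normal subgroup $N\lhd G$ from an earlier stage need not stay normal in a finite overgroup $\hat G$; this is what forces the strengthened induction hypothesis $P(\alpha)$ and the passage to the normal core $N_0$, and it is why the stages $\mathcal{D}_\alpha$ must be arranged to be closed under subgroups. Everything else is bookkeeping --- checking $\bigcup_\alpha\mathcal{D}_\alpha=\CCC$, that the displayed family $\{\langle S\cup T\rangle\}$ is genuinely directed with union $\hat G$, and that elementary amenability is stable under the finite products, extensions, subgroups and quotients invoked above.
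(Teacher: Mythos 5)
Your proposal is correct and follows essentially the same route as the paper, which proves the theorem by exactly this transfinite induction from Lemma \ref{lem:finite}, Lemma \ref{prop:union} and Proposition \ref{prop:extension}, citing the proof of \cite{MR1242889}*{Lemma 4.9} for the bookkeeping that you have written out explicitly (the subgroup-closed filtration, the strengthened induction hypothesis on finite extensions, and the normal-core reduction). No gaps.
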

\begin{proof}
        In the proof of
            \cite{MR1242889}*{Lemma 4.9} it is shown that the assertion
            follows (by transfinite induction) directly from Lemma
            \ref{lem:finite}, 
            Lemma \ref{prop:union} and Proposition \ref{prop:extension}.
\end{proof}

\begin{corollary}\label{corollary}
Let $K$ be a subfield of $\overline{\mathbb{Q}}$ which is closed
under complex conjugation.
  Then the center-valued Atiyah conjecture is true for all elementary amenable extensions of pure braid groups, of 
  right-angled Artin groups, of primitive link groups, of cocompact special
  groups, or of products of such.
\end{corollary}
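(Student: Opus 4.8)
The plan is to derive Corollary~\ref{corollary} by combining the inverse-limit criterion of Proposition~\ref{prop:inverse_limit} with the already-established residual facts about the groups in question. First I would recall that each of the listed base groups $\Gamma$ --- pure braid groups, right-angled Artin groups, primitive link groups, (virtually) cocompact special groups --- is known to be residually (torsion-free nilpotent or, more to the point) residually in Linnell's class $\CCC$: there is a nested sequence of normal subgroups $\Gamma=\Gamma_0\supergroup\Gamma_1\supergroup\cdots$ with trivial intersection and with every quotient $\Gamma/\Gamma_i$ lying in $\CCC$ (indeed, in the cited works \cite{L+S1}, \cite{MR2415028}, \cite{LinnellOkunSchick}, \cite{Schreve} these quotients are even built to be nilpotent-by-finite or elementary-amenable-by-finite, hence in $\CCC$, and in particular satisfy the determinant bound conjecture). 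Moreover these residual chains can be chosen so that every finite subgroup of $\Gamma/\Gamma_i$ lifts isomorphically to $\Gamma$ --- for the base groups this is trivial since they are torsion-free, so the finite subgroups are all trivial.

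Next I would pass to an arbitrary elementary amenable extension $1\to\Gamma\to G\xrightarrow{\pi}E\to1$ with $E$ elementary amenable (or more generally to $G$ built from finitely many such $\Gamma$'s by products and elementary amenable extensions). The strategy is: the center-valued Atiyah conjecture for the base groups $\Gamma$ follows from Theorem~\ref{theo:LiC} via Proposition~\ref{prop:inverse_limit} --- one feeds the residual chain $\Gamma_i$ into Proposition~\ref{prop:inverse_limit}, noting that each quotient $\Gamma/\Gamma_i\in\CCC$ satisfies the center-valued Atiyah conjecture by Theorem~\ref{theo:LiC} and the determinant bound conjecture by membership in $\CCC$; products of such groups are handled because a finite product of groups in $\CCC$ residually has quotients that are products of groups in $\CCC$, hence again in $\CCC$, or alternatively one applies Proposition~\ref{prop:inverse_limit} directly to the product of the chains. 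Then, for the elementary amenable extension, I would invoke Proposition~\ref{prop:extension}: for each finite subgroup $F\subgroup E$, the preimage $\pi^{-1}(F)$ is an elementary-amenable-by-(base) group, i.e.\ fits in an extension $1\to\Gamma\to\pi^{-1}(F)\to F\to1$ with $F$ finite; applying the residual chain of $\Gamma$ (which is characteristic enough, or can be intersected with its $G$-conjugates to be normal in $\pi^{-1}(F)$) one checks that $\pi^{-1}(F)$ itself is residually in $\CCC$ with the finite-subgroup-lifting property, so Proposition~\ref{prop:inverse_limit} again gives the center-valued Atiyah conjecture for $\pi^{-1}(F)$, and then Proposition~\ref{prop:extension} upgrades this to $K[G]$.

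The main obstacle I anticipate is the bookkeeping needed to verify the hypotheses of Proposition~\ref{prop:inverse_limit} for the groups $\pi^{-1}(F)$ rather than for the base groups directly: one must produce a descending sequence of \emph{normal} subgroups of $\pi^{-1}(F)$ with trivial intersection, elementary-amenable-by-$\CCC$ quotients (so that Theorem~\ref{theo:LiC} applies to the quotients, after possibly iterating the extension argument), the determinant bound property for the quotients, and the lifting of finite subgroups. The normality is the delicate point: the natural chain comes from the residual chain $\{\Gamma_i\}$ of $\Gamma$, which need not be normal in $\pi^{-1}(F)$; the standard fix is to replace $\Gamma_i$ by $\bigcap_{g}g\Gamma_ig^{-1}$ over a set of coset representatives of $\Gamma$ in $\pi^{-1}(F)$ (finite since $F$ is finite), which is again a finite-index-in-$\Gamma_i$ normal subgroup with the required quotient properties and trivial total intersection. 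The remaining checks --- that $\pi^{-1}(F)/(\text{new }\Gamma_i)$ is elementary-amenable-by-$\CCC$ hence in $\CCC$ (using that $\CCC$ is closed under the relevant extensions and that a torsion-free-elementary-amenable extension of a group in $\CCC$ is in $\CCC$), and the determinant bound conjecture for these quotients --- are routine once this normalization is in place. Finally, the restriction $K\subset\overline{\mathbb Q}$ is exactly what is needed to apply the approximation Theorem~\ref{theo:approxi} underlying Proposition~\ref{prop:inverse_limit}, and this hypothesis is inherited without change.
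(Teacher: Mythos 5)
Your overall strategy is sound and ends up resting on the same pillars as the paper's one-paragraph proof — Proposition \ref{prop:inverse_limit}, Theorem \ref{theo:LiC}, and the residual chains supplied by the cited references — but you take a more layered route. The paper applies Proposition \ref{prop:inverse_limit} once, directly to the elementary amenable extension $G$ itself: the cited works \cite{L+S1}, \cite{MR2415028}, \cite{LinnellOkunSchick}, \cite{Schreve} are invoked precisely because they produce, for these $G$, a normal residual chain with elementary amenable quotients \emph{together with} the finite-subgroup-lifting hypothesis of Proposition \ref{prop:inverse_limit}; then Theorem \ref{theo:LiC} covers the quotients. You instead first establish the conjecture for the torsion-free base groups $\Gamma$ via Proposition \ref{prop:inverse_limit} (where lifting is vacuous), then reduce the elementary amenable extension to finite extensions $\pi^{-1}(F)$ via Proposition \ref{prop:extension}, and finally apply Proposition \ref{prop:inverse_limit} again to each $\pi^{-1}(F)$. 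This is not wrong, and in fact it mirrors the internal structure of the cited references themselves; it just re-derives what the paper treats as a black box.

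The one place your write-up is too glib is the finite-subgroup-lifting hypothesis for $\pi^{-1}(F)$ with the intersected chain $\Gamma_i'$. Passing to $\bigcap_g g\Gamma_i g^{-1}$ fixes normality, but it does \emph{not} automatically make every finite subgroup of $\pi^{-1}(F)/\Gamma_i'$ lift isomorphically to $\pi^{-1}(F)$ — these finite subgroups can a priori be larger than $F$ and need not come from subgroups of $\pi^{-1}(F)$. That this lifting property can be arranged is exactly the nontrivial cohomological/$p$-congruence content of \cite{L+S1}, \cite{LinnellOkunSchick}, \cite{Schreve}; calling it a ``routine check'' understates what is being used. Once you acknowledge that this is precisely what those references establish (for finite, and in fact elementary amenable, extensions of the base groups), the argument closes up and matches the paper's conclusion. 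You should also be explicit that the quotients $\Gamma/\Gamma_i'$ (hence $\pi^{-1}(F)/\Gamma_i'$) satisfy the determinant bound because they are elementary amenable, and that $K\subset\overline{\Q}$ is needed so Theorem \ref{theo:approxi} applies — you do note the latter at the end, which is correct.
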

\begin{proof}
  Each of the groups in the list has a sequence of normal subgroups with
  trivial intersection and with elementary amenable quotients such that in
  addition the condition of Proposition \ref{prop:inverse_limit} is met. This
  is shown for the extensions of pure braid groups in \cite{L+S1}, for
  primitive link groups in \cite{MR2415028} and for right-angled Coxeter and
  Artin groups in \cite{LinnellOkunSchick}, and combining \cite{Schreve} with
  \cite{LinnellOkunSchick} it also follows for special cocompact
  groups. Combining Theorem \ref{theo:LiC} 
 and Proposition \ref{prop:inverse_limit}, the assertion
  follows. 
\end{proof}

\begin{bibdiv}
  \begin{biblist}
\bib{Agol}{article}{
    AUTHOR = {Agol, Ian},
     TITLE = {The virtual {H}aken conjecture},
      NOTE = {With an appendix by Agol, Daniel Groves, and Jason Manning},
   JOURNAL = {Doc. Math.},
  FJOURNAL = {Documenta Mathematica},
    VOLUME = {18},
      YEAR = {2013},
     PAGES = {1045--1087},
      ISSN = {1431-0635},
   MRCLASS = {20F67 (57Mxx)},
  MRNUMBER = {3104553},
MRREVIEWER = {Thomas Koberda},
}
		
\bib{arveson}{book}{
   author={Arveson, William},
   title={An invitation to $C\sp*$-algebras},
   note={Graduate Texts in Mathematics, No. 39},
   publisher={Springer-Verlag},
   place={New York},
   date={1976},
   pages={x+106},
 }
\bib{Atiyah}{article}{
   author={Atiyah, Michael F.},
   title={Elliptic operators, discrete groups and von Neumann algebras},
   conference={
      title={Colloque ``Analyse et Topologie'' en l'Honneur de Henri Cartan
      (Orsay, 1974)},
   },
   book={
      publisher={Soc. Math. France},
      place={Paris},
   },
   date={1976},
   pages={43--72. Ast\'erisque, No. 32-33},
   review={\MR{0420729 (54 \#8741)}},
}

\bib{Austin}{article}{
   author={Austin, Tim},
   title={Rational group ring elements with kernels having irrational
   dimension},
   journal={Proc. Lond. Math. Soc. (3)},
   volume={107},
   date={2013},
   number={6},
   pages={1424--1448},
   issn={0024-6115},
   review={\MR{3149852}},
   doi={10.1112/plms/pdt029},
}

\bib{Bergeron-Wise}{article}{
    AUTHOR = {Bergeron, Nicolas and Wise, Daniel T.},
     TITLE = {A boundary criterion for cubulation},
   JOURNAL = {Amer. J. Math.},
  FJOURNAL = {American Journal of Mathematics},
    VOLUME = {134},
      YEAR = {2012},
    NUMBER = {3},
     PAGES = {843--859},
      ISSN = {0002-9327},
     CODEN = {AJMAAN},
   MRCLASS = {20F67},
  MRNUMBER = {2931226},
       DOI = {10.1353/ajm.2012.0020},
       URL = {http://dx.doi.org/10.1353/ajm.2012.0020},
}
\bib{BPR10}{article}{
   author={Birkenmeier, Gary F.},
   author={Park, Jae Keol},
   author={Rizvi, S. Tariq},
   title={A theory of hulls for rings and modules},
   conference={
      title={Ring and module theory},
   },
   book={
      series={Trends Math.},
      publisher={Birkh\"auser/Springer Basel AG, Basel},
   },
   date={2010},
   pages={27--71},
   review={\MR{2744041 (2012c:16001)}},
   doi={10.1007/978-3-0346-0007-1-2},
}

% \bib{Bear}{article}{
%    author={Baer, Reinhold},
%    title={Sylow theorems for infinite groups},
%    journal={Duke Math. J.},
%    volume={6},
%    date={1940},
%    pages={598--614},
%    issn={0012-7094},
%    review={\MR{0002122 (2,2a)}},
% }
\bib{Sterling}{book}{
   author={Berberian, Sterling K.},
   title={Baer {$*$}-rings},
   note={Die Grundlehren der mathematischen Wissenschaften, Band 195},
   publisher={Springer-Verlag},
   place={New York},
   date={1972},
   pages={xiii+296},
   review={\MR{0429975 (55 \#2983)}},
}
\bib{MR2415028}{article}{
   author={Blomer, Inga},
   author={Linnell, Peter A.},
   author={Schick, Thomas},
   title={Galois cohomology of completed link groups},
   journal={Proc. Amer. Math. Soc.},
   volume={136},
   date={2008},
   number={10},
   pages={3449--3459},
   issn={0002-9939},
   review={\MR{2415028 (2009i:20059)}},
   doi={10.1090/S0002-9939-08-09395-7},
}
	
% \bib{Clair}{article}{
%    author={Clair, Bryan},
%    title={Residual amenability and the approximation of $L^2$-invariants},
%    journal={Michigan Math. J.},
%    volume={46},
%    date={1999},
%    number={2},
%    pages={331--346},
%    issn={0026-2285},
%    review={\MR{1704205 (2001b:58053)}},
%    doi={10.1307/mmj/1030132414},
% }
% \bib{MR2360474}{book}{
%    author={Davis, Michael W.},
%    title={The geometry and topology of Coxeter groups},
%    series={London Mathematical Society Monographs Series},
%    volume={32},
%    publisher={Princeton University Press},
%    place={Princeton, NJ},
%    date={2008},
%    pages={xvi+584},
%    isbn={978-0-691-13138-2},
%    isbn={0-691-13138-4},
%    review={\MR{2360474 (2008k:20091)}},
% }
		
% \bib{MR1783167}{article}{
%    author={Davis, Michael W.},
%    author={Januszkiewicz, Tadeusz},
%    title={Right-angled Artin groups are commensurable with right-angled
%    Coxeter groups},
%    journal={J. Pure Appl. Algebra},
%    volume={153},
%    date={2000},
%    number={3},
%    pages={229--235},
%    issn={0022-4049},
%    review={\MR{1783167 (2001m:20056)}},
%    doi={10.1016/S0022-4049(99)00175-9},
% }

% \bib{MR1812434}{article}{
%       author={Davis, Michael~W.},
%       author={Okun, Boris},
%        title={Vanishing theorems and conjectures for the {$\ell^2$}-homology of
%   right-angled Coxeter groups},
%         date={2001},
%         ISSN={1465-3060},
%      journal={Geom. Topol.},
%       volume={5},
%        pages={7\ndash 74},
%          url={http://dx.doi.org/10.2140/gt.2001.5.7},
%       review={\MR{1812434 (2002e:58039)}},
% }

\bib{DLMSY}{article}{
   author={Dodziuk, J{\'o}zef},
   author={Linnell, Peter},
   author={Mathai, Varghese},
   author={Schick, Thomas},
   author={Yates, Stuart},
   title={Approximating $L^2$-invariants and the Atiyah conjecture},
   note={Dedicated to the memory of J\"urgen K. Moser},
   journal={Comm. Pure Appl. Math.},
   volume={56},
   date={2003},
   number={7},
   pages={839--873},
   issn={0010-3640},
   review={\MR{1990479 (2004g:58040)}},
   doi={10.1002/cpa.10076},
}
% \bib{DM}{article}{
%    author={Dodziuk, Jozef},
%    author={Mathai, Varghese},
%    title={Approximating $L^2$ invariants of amenable covering spaces: a
%    combinatorial approach},
%    journal={J. Funct. Anal.},
%    volume={154},
%    date={1998},
%    number={2},
%    pages={359--378},
%    issn={0022-1236},
%    review={\MR{1612713 (99e:58201)}},
%    doi={10.1006/jfan.1997.3205},
% }

% \bib{MR1179860}{article}{
%    author={Duchamp, G.},
%    author={Krob, D.},
%    title={The lower central series of the free partially commutative group},
%    journal={Semigroup Forum},
%    volume={45},
%    date={1992},
%    number={3},
%    pages={385--394},
%    issn={0037-1912},
%    review={\MR{1179860 (93e:20047)}},
%    doi={10.1007/BF03025778},
% }
\bib{Elek+Szabo}{article}{
   author={Elek, G{\'a}bor},
   author={Szab{\'o}, Endre},
   title={Hyperlinearity, essentially free actions and $L^2$-invariants.
   The sofic property},
   journal={Math. Ann.},
   volume={332},
   date={2005},
   number={2},
   pages={421--441},
   issn={0025-5831},
   review={\MR{2178069 (2007i:43002)}},
   doi={10.1007/s00208-005-0640-8},
}
\bib{MR2279234}{article}{
   author={Farkas, Daniel R.},
   author={Linnell, Peter A.},
   title={Congruence subgroups and the Atiyah conjecture},
   conference={
      title={Groups, rings and algebras},
   },
   book={
      series={Contemp. Math.},
      volume={420},
      publisher={Amer. Math. Soc.},
      place={Providence, RI},
   },
   date={2006},
   pages={89--102},
   review={\MR{2279234 (2008b:16033)}},
}
		
\bib{Grabowski}{article}{
   author={Grabowski, Lukasz},
   title={On Turing dynamical systems and the Atiyah problem},
   journal={Invent. Math.},
   volume={198},
   date={2014},
   number={1},
   pages={27--69},
   issn={0020-9910},
   review={\MR{3260857}},
   doi={10.1007/s00222-013-0497-5},
}

\bib{Haglund-Wise}{article}{
    AUTHOR = {Haglund, Fr{\'e}d{\'e}ric and Wise, Daniel T.},
     TITLE = {Special cube complexes},
   JOURNAL = {Geom. Funct. Anal.},
  FJOURNAL = {Geometric and Functional Analysis},
    VOLUME = {17},
      YEAR = {2008},
    NUMBER = {5},
     PAGES = {1551--1620},
      ISSN = {1016-443X},
     CODEN = {GFANFB},
   MRCLASS = {20F36 (20F55 20F67)},
  MRNUMBER = {2377497},
MRREVIEWER = {Patrick Bahls},
       DOI = {10.1007/s00039-007-0629-4},
       URL = {http://dx.doi.org/10.1007/s00039-007-0629-4},
}
% \bib{MR1170351}{book}{
%    author={Kadison, Richard V.},
%    author={Ringrose, John R.},
%    title={Fundamentals of the theory of operator algebras. Vol. IV},
%    note={Special topics;
%    Advanced theory---an exercise approach},
%    publisher={Birkh\"auser Boston Inc.},
%    place={Boston, MA},
%    date={1992},
%    pages={i--xv and 274--859},
%    isbn={0-8176-3498-3},
%    review={\MR{1170351 (93g:46052)}},
% }

% \bib{MR1134132}{book}{
%    author={Kadison, Richard V.},
%    author={Ringrose, John R.},
%    title={Fundamentals of the theory of operator algebras. Vol. III},
%    note={Special topics;
%    Elementary theory---an exercise approach},
%    publisher={Birkh\"auser Boston Inc.},
%    place={Boston, MA},
%    date={1991},
%    pages={xiv+273},
%    isbn={0-8176-3497-5},
%    review={\MR{1134132 (92m:46084)}},
% }

\bib{Kadison2}{book}{
   author={Kadison, Richard V.},
   author={Ringrose, John R.},
   title={Fundamentals of the theory of operator algebras. Vol. II},
   series={Pure and Applied Mathematics},
   volume={100},
   note={Advanced theory},
   publisher={Academic Press Inc.},
   place={Orlando, FL},
   date={1986},
   pages={i--xiv and 399--1074},
   isbn={0-12-393302-1},
   review={\MR{859186 (88d:46106)}},
}

% \bib{Kadison1}{book}{
%    author={Kadison, Richard V.},
%    author={Ringrose, John R.},
%    title={Fundamentals of the theory of operator algebras. Vol. I},
%    series={Pure and Applied Mathematics},
%    volume={100},
%    note={Elementary theory},
%    publisher={Academic Press Inc. [Harcourt Brace Jovanovich Publishers]},
%    place={New York},
%    date={1983},
%    pages={xv+398},
%    isbn={0-12-393301-3},
%    review={\MR{719020 (85j:46099)}},
% }
		
\bib{Knebusch}{article}{
  author ={Knebusch, Anselm},
  title={Approximation of center-valued Betti numbers},
  journal={Houston Journal of Mathematics},
  volume={37}, pages={161--179}, year={2011},
}
\bib{Lang1}{book}{
author={Lang, Serge},
   title={Algebra},
   series={Graduate Texts in Mathematics},
   volume={211},
   edition={3},
   publisher={Springer-Verlag},
   place={New York},
   date={2002},
   pages={xvi+914},
   isbn={0-387-95385-X},
   review={\MR{1878556 (2003e:00003)}},
   doi={10.1007/978-1-4613-0041-0},
}

\bib{MR1242889}{article}{
   author={Linnell, Peter A.},
   title={Division rings and group von Neumann algebras},
   journal={Forum Math.},
   volume={5},
   date={1993},
   number={6},
   pages={561--576},
   issn={0933-7741},
   review={\MR{1242889 (94h:20009)}},
   doi={10.1515/form.1993.5.561},
}

\bib{LinnellOkunSchick}{article}{
author={Linnell, Peter},
   author={Okun, Boris},
   author={Schick, Thomas},
   title={The strong Atiyah conjecture for right-angled Artin and
Coxeter
   groups},
   journal={Geom. Dedicata},
   volume={158},
   date={2012},
   pages={261--266},
   issn={0046-5755},
   review={\MR{2922714}},
   doi={10.1007/s10711-011-9631-y},
}
 		
\bib{L+S1}{article}{
   author={Linnell, Peter},
   author={Schick, Thomas},
   title={Finite group extensions and the Atiyah conjecture},
   journal={J. Amer. Math. Soc.},
   volume={20},
   date={2007},
   number={4},
   pages={1003--1051 (electronic)},
   issn={0894-0347},
   review={\MR{2328714 (2008m:58041)}},
   doi={10.1090/S0894-0347-07-00561-9},
}
\bib{L+S2}{article}{
  author={Linnell, Peter},
   author={Schick, Thomas},
   title={The Atiyah conjecture and Artinian rings},
   journal={Pure and Applied Mathematics Quarterly},
  volume={8}, year={2012}, 
  pages={313--328},
   note={arXiv:0711.3328},
   date={2007},
}
% \bib{MR2561762}{article}{
%    author={Lorensen, Karl},
%    title={Groups with the same cohomology as their pro-$p$ completions},
%    journal={J. Pure Appl. Algebra},
%    volume={214},
%    date={2010},
%    number={1},
%    pages={6--14},
%    issn={0022-4049},
%    review={\MR{2561762 (2010k:20088)}},
%    doi={10.1016/j.jpaa.2009.04.002},
% }

\bib{MR1474192}{article}{
   author={L{\"u}ck, Wolfgang},
   title={Hilbert modules and modules over finite von Neumann algebras and
   applications to $L^2$-invariants},
   journal={Math. Ann.},
   volume={309},
   date={1997},
   number={2},
   pages={247--285},
   issn={0025-5831},
   review={\MR{1474192 (99d:58169)}},
   doi={10.1007/s002080050112},
}	
\bib{Lueck02}{book}{
   author={L{\"u}ck, Wolfgang},
   title={$L^2$-invariants: theory and applications to geometry and
   $K$-theory},
   series={Ergebnisse der Mathematik und ihrer Grenzgebiete. 3. Folge. A
   Series of Modern Surveys in Mathematics [Results in Mathematics and
   Related Areas. 3rd Series. A Series of Modern Surveys in Mathematics]},
   volume={44},
   publisher={Springer-Verlag},
   place={Berlin},
   date={2002},
   pages={xvi+595},
   isbn={3-540-43566-2},
   review={\MR{1926649 (2003m:58033)}},
}

\bib{Lueck09}{article}{
   author={L{\"u}ck, Wolfgang},
   title={$L^2$-invariants from the algebraic point of view},
   conference={
      title={Geometric and cohomological methods in group theory},
   },
   book={
      series={London Math. Soc. Lecture Note Ser.},
      volume={358},
      publisher={Cambridge Univ. Press},
      place={Cambridge},
   },
   date={2009},
   pages={63--161},
   review={\MR{2605176 (2011f:46090)}},
}

\bib{Moody}{article}{
   author={Moody, John Atwell},
   title={Brauer induction for $G_0$ of certain infinite groups},
   journal={J. Algebra},
   volume={122},
   date={1989},
   number={1},
   pages={1--14},
   issn={0021-8693},
   review={\MR{994933 (90b:18014)}},
   doi={10.1016/0021-8693(89)90235-4},
}	
% \bib{MR470211}{book}{
%    author={Passman, Donald S.},
%    title={The algebraic structure of group rings},
%    series={Pure and Applied Mathematics},
%    publisher={Wiley-Interscience [John Wiley \& Sons]},
%    place={New York},
%    date={1977},
%    pages={xiv+720},
%    isbn={0-471-02272-1},
%    review={\MR{470211 (81d:16001)}},
% }

\bib{Passman}{book}{
   author={Passman, Donald S.},
   title={Infinite group rings},
   note={Pure and Applied Mathematics, 6},
   publisher={Marcel Dekker Inc.},
   place={New York},
   date={1971},
   pages={viii+149},
   review={\MR{0314951 (47 \#3500)}},
}

\bib{Pichot-Schick-Zuk}{article}{
   author={Pichot, Mika{\"e}l},
   author={Schick, Thomas},
   author={\.Zuk, Andrzej},
   title={Closed manifolds with transcendental $L^2$-Betti numbers},
   journal={J. Lond. Math. Soc. (2)},
   volume={92},
   date={2015},
   number={2},
   pages={371--392},
   issn={0024-6107},
   review={\MR{3404029}},
   doi={10.1112/jlms/jdv026},
}

\bib{MR1885124}{article}{
   author={Reich, Holger},
   title={On the $K$- and $L$-theory of the algebra of operators affiliated
   to a finite von Neumann algebra},
   journal={$K$-Theory},
   volume={24},
   date={2001},
   number={4},
   pages={303--326},
   issn={0920-3036},
   review={\MR{1885124 (2003m:46103)}},
   doi={10.1023/A:1014078228859},
}
	
\bib{MR1828605}{article}{
   author={Schick, Thomas},
   title={$L^2$-determinant class and approximation of $L^2$-Betti
   numbers},
   journal={Trans. Amer. Math. Soc.},
   volume={353},
   date={2001},
   number={8},
   pages={3247--3265 (electronic)},
   issn={0002-9947},
   review={\MR{1828605 (2002f:58056)}},
   doi={10.1090/S0002-9947-01-02699-X},
  note={\textit{Erratum} at arXiv:math/9807032},
}
\bib{Schick}{article}{
   author={Schick, Thomas},
   title={Integrality of $L^2$-Betti numbers},
   journal={Math. Ann.},
   volume={317},
   date={2000},
   number={4},
   pages={727--750},
   issn={0025-5831},
   review={\MR{1777117 (2002k:55009a)}},
   doi={10.1007/PL00004421},
  note={\textit{Erratum} in vol.~\textbf{322}, 421--422}

}
% \bib{corrigendum}{unpublished}{
%   author={Schick, Thomas},
%   title={Errata to ``$L^2$-determinant class and approximation of $L^2$-Betti
%     numbers, ``Integrality of $L^2$-Betti numbers''}, 
%   date={2011},
%   note={arXiv:submit/0297867},
% }

\bib{Schreve}{article}{
    AUTHOR = {Schreve, Kevin},
     TITLE = {The strong {A}tiyah conjecture for virtually cocompact special
              groups},
   JOURNAL = {Math. Ann.},
  FJOURNAL = {Mathematische Annalen},
    VOLUME = {359},
      YEAR = {2014},
    NUMBER = {3-4},
     PAGES = {629--636},
      ISSN = {0025-5831},
   MRCLASS = {20F65},
  MRNUMBER = {3231009},
MRREVIEWER = {Qin Wang},
       DOI = {10.1007/s00208-014-1007-9},
       URL = {http://dx.doi.org/10.1007/s00208-014-1007-9},
}

  \end{biblist}
\end{bibdiv}
 
   % \bibliographystyle{plain}
   % \bibliography{Bib}

\end{document}